\def\rp#1{{\color{blue} #1}}
\newcommand{\AC}{\mathcal{AC}}
\newcommand{\cadlag}{c\`adl\`ag }
\newcommand{\e}{\mathrm{e}}
\newcommand{\I}{\mathcal{I}}
\newcommand{\J}{\mathcal I^{\mathrm{bd}}}
\mathchardef\mhyphen="2D 
\renewcommand{\emptyset}{\varnothing}
\newcommand{\NN}{\mathbb{N}}
\newcommand{\PP}{\mathbb{P}}
\newcommand{\RR}{\ensuremath{\mathbb{R}}}
\renewcommand{\S}{\mathcal{S}}
\newcommand{\II}{\mathds{1}}
\newcommand{\dd}{\mathrm{d}}
\newcommand{\norm}[1]{\ensuremath{\left\lVert{#1}\right\rVert}}
\renewcommand{\abs}[1]{\ensuremath{\left\lvert{#1}\right\rvert}}
\newcommand{\one}{\mathds{1}}
\newcommand{\rr}{z}
\newcommand{\rrr}{\mathfrak{z}}
\newcommand{\dpp}{\delta^{\prime\prime}/2}
\def\epsilon{\varepsilon}
\newcommand{\Acal}  {{\mathcal A}}
\newcommand{\Bcal}  {{\mathcal B}}
\newcommand{\Ccal}   {{\mathcal C }}
\newcommand{\Ecal}   {{\mathcal E }}
\newcommand{\Gcal}   {{\mathcal G }}
\newcommand{\Hcal}   {{\mathcal H }}
\newcommand{\Ical}   {{\mathcal I }}
\newcommand{\Jcal}   {{\mathcal J }}
\newcommand{\Kcal}   {{\mathcal K }}
\newcommand{\Rcal}   {{\mathcal R }}
\newcommand{\Scal}   {{\mathcal S }}
\newcommand{\BBp}{\mathrm{B}}
\DeclareMathOperator\Lip{Lip}
\newtheorem{thm}{Theorem}
  \newcommand{\eqnum}{\leavevmode\hfill\refstepcounter{align}\textup{\tagform@{\thealign}}}
\definecolor{darkspringgreen}{rgb}{0.09, 0.45, 0.27}
\def\nv{n^{v}}
  \def\nvp{n_+^v}
\def\tnv{\tilde n^{v}}
\renewcommand{\d}{\mathrm{d}}
\renewcommand{\SS}{\mathcal S}
\DeclareMathAlphabet{\mathbbold}{U}{bbold}{m}{n}
\author{Andrea Agazzi}
\address{Department of Mathematics, Duke University, 120 Science Dr, Durham, NC 27710, USA}
\email{agazzi@math.duke.edu}
\author{Luisa Andreis}
\author{Robert I. A. Patterson}
\author{D. R. Michiel Renger}
\address{Weierstraß-Institut f\"ur Angewandte Analysis und Stochastik, Mohrenstrasse 39, 10117 Berlin, Germany}
\email{\{andreis, patterson, renger\}@wias-berlin.de}
\title{
\large Large deviations for Markov jump processes\\with uniformly diminishing rates
}
\date{\today}
\begin{document}

\begin{abstract} We prove a large-deviation principle (\abbr{LDP}) for the sample paths of jump Markov processes in the small noise limit when, possibly, \emph{all} the jump rates
  vanish \emph{uniformly}, but slowly enough, in a region of the state space. We further discuss the optimality of our assumptions on the decay of the jump rates. As a direct application of this work we relax the assumptions needed for the application of \abbr{LDP}s to, \emph{e.g.}, Chemical Reaction Network dynamics, where vanishing reaction rates arise naturally particularly the context of mass action kinetics.
\end{abstract}
\maketitle

\section{Introduction}

\subsection{Large deviations of Markov jump processes}
\label{subsec:intro ldps}

We study a family of $d$-dimensional Markov jump processes $\{X^{v}\}_{v \in \mathbb N}$ with state space $(v^{-1} \mathbb{Z})^d$, deterministic initial condition $X^{v}(0) = x_0^v \in (v^{-1} \mathbb{Z})^d$ and generator: 
\begin{equation}\label{e:generator}
  (\LL^v f)(x):=v\sum_{r\in\mathcal{R}}\Lambda^{v}_r(x)\big\lbrack f(x+v^{-1}{\gamma^r}) - f(x)\big\rbrack.
\end{equation}
Here $\mathcal{R}$ is the finite set of possible jumps, $\gamma^r\in\mathbb{Z}^d$ are the fixed jump vectors, and $v\Lambda^{v}_r~:~(v^{-1}\mathbb{Z})^d \to [0,\infty)$ the associated jump rates. The parameter $v$ controls the noise in the system, and the scaling is chosen so that $\Lambda^{v}_r(x)$ converge as $v\to\infty$. Under this scaling it is known that the paths $X^v$ concentrate on solutions of the \emph{fluid limit} ODE~\cite{Kurtz1970}:
\begin{equation}\label{eq:hydro_lim}
\frac{\d}{\d t} x(t) =\sum_{r\in\mathcal{R}}\lambda_r(x(t))\gamma^r\,,
\end{equation}
where the continuous rates $\lambda_r~:~\mathbb R^d \to [0,\infty)$ are the limits of $\Lambda^{v}_r(x)$ as $v\to\infty$.

The process~\eqref{e:generator} and the ODE~\eqref{eq:hydro_lim} are used as microscopic and macroscopic models for a wide range of applications. For example, in the context of chemical reactions $X^v$ denotes the concentrations (number of molecules per unit volume) of $d$ species being transformed by a set of reactions $\R$. Here, for each reaction $r\in \mathcal R$, the vectors $\gamma^r$ encode which species are removed and created when a reaction $r$ occurs, and $\lambda_r(x)$ are the reaction rates~\cite{Kurtz1972}. In that setting $v$ can be interpreted as the volume size over which the concentrations are averaged. Other typical applications using similar models include biological systems involving predator-prey interaction, birth/cell division and death, biological fitness models, as well as epidemiological models.
In these settings, large-deviation techniques are often applied to simplify the dynamical landscape of the complex, high dimensional microscopic model~\eqref{e:generator} while retrieving quantitative information about the random fluctuations around the mean~\eqref{eq:hydro_lim}. This information can be used for example to study non-equilibrium thermodynamics~\cite{esposito19,MielkePattersonPeletierRenger2015}, to speed up simulations of rare events~\cite{delmoral04, vde04, vde19}, or to study spontaneous transitions between metastable states \cite{freidlin12} also in the multi-scale setting \cite{popovic19}.

The classical proof of the Large-Deviation Principle (\abbr{LDP}) uses a tilting, also called a change-of-measure technique. The main challenge there is that the tilting can only be performed around sufficiently regular paths, whereas the large-deviation principle needs to be proven for any non-typical path. Therefore, the large-deviation lower bound requires an approximation argument, either for the random process or for the rate functional.
A particular challenge in either case is to approximate a path without changing its starting point, which is required when proving the large-deviation principle under a deterministic initial condition.
This becomes more difficult if the jump rates vanish in some regions of the state space, which is however an inherent property of the models used in many application domains. For example, in the context of chemical kinetics, it is natural to assume that the rate of a chemical reaction vanishes when the concentration of one or more of the reactants approaches $0$. Similarly in the context of infectious disease models, the rate of spread of a virus is usually modelled as a linear function of the infected population.


\begin{example}\label{ex:ex1}
  The problem is nicely illustrated by the simplest model for autocatalysis or cell division, in chemical notation: $\mathsf{A}\to 2\mathsf{A}$. In this case there is only one species and one jump, so we may write the linear jump rate as $\Lambda^{v}(x)=\lambda(x)=x$, starting from a concentration with one particle, i.e. $X^v(0)=x_0^v=1/v$. Clearly, the process converges to the solution of $\dot x(t)=x(t)$ with initial condition $x(0)=x_0=0$, that is $x(t)\equiv0$. In other words, the process is expected to stick to the degenerate set $\partial \mathcal S = \{x=0\}$, which corresponds to the boundary of the state space $\mathbb R_+$. However, it can be calculated (as an application of~\cite[Lemma 2.1]{Kordecki97}) that $\lim_{v \to \infty} v^{-1}\log\PP\big(X^v(t)\geq\delta\big)=\delta\log(1-e^{-t})$ for any fixed time $t>0$ and $\delta>0$.
Although this is a large-deviation result about the marginal $X^v(t)$, it suggests that the paths can also ``escape'' from the boundary with finite large-deviation cost.
On the other hand, we note that choosing $x_0^v\equiv0$ implies that $X^v(t)\equiv0$ almost surely. Hence, whether or not escape is possible depends strongly on the initial condition. This is a similar principle as what is sometimes called a ``well-prepared initial condition'' in $\Gamma$-convergence theory~\cite{Mielke2016}.
\end{example}

For more general models of the type~\eqref{e:generator}, one expects that as long as the jump rates $\lambda_r(x)$ {do not vanish too rapidly approaching the degenerate points} and when starting at a well-chosen initial condition near such points, then the process {will} be able to escape with finite large-deviation cost, and the large-deviation principle should still hold. In this paper, we show that this is indeed the case. More specifically, denoting by $\partial \mathcal S$ the set where some -- or possibly \emph{all} -- of the reaction rates $\lambda$ vanish, we prove that when the rates near {$\partial \Scal$} decrease slower than $\exp\pc{ -1/\mathrm{dist}(x,\partial\Scal)^{\alpha}}$, \ie
\begin{equation}
    \mathrm{dist}(x,\partial\Scal)^\alpha \log \lambda(x) \to 0 \qquad \text{as }\quad \mathrm{dist}(x,\partial\Scal) \to 0,
\label{eq:Kratz Pardoux rate}
\end{equation}
the large-deviation principle holds under the assumption $\alpha\in\lbrack0,1)$. Furthermore, we show that these conditions on the decay of the rates close to the degenerate set are, in some sense, necessary and not just sufficient.




\subsection{Literature and approach}

Early papers~\cite{Feng1994dynamic,Leonard1995} establishing a sample-path large-deviation principle for jump Markov processes mimicked the Dawson--G\"artner approach~\cite{Dawson1987}, where one first derives an abstract large-deviation result for the empirical measure on paths, and then contracts it to obtain a large-deviation principle for the path of the empirical measure. Another approach, now considered the classic tilting technique, was first used in~\cite{ShwartzWeiss95} assuming that all the jump rates are uniformly bounded away from zero in the domain of interest. In~\cite{ShwartzWeiss05}, the authors relaxed this condition by assuming the existence \emph{a subset} of jumps with rates that are uniformly bounded away from zero and that ``push'' the process away from the degeneracy region. This assumption is further relaxed in \cite{dupuis16,agazzi18,agazzi182,anderson18}, requiring only the existence of a \emph{sequence} of jumps that sequentially transport the process away from the problematic region.

Recent works have taken steps to generalise these assumptions to the uniformly vanishing case (when all rates can vanish in some region of state space), in the context of chemical kinetics \cite{pattersonrenger19} and in the context of infectious disease models \cite{pardoux16,pardoux17}. These papers give sufficient conditions on the models at hand to bypass the technical difficulties encountered in the proof of the \abbr{LDP} lower bound when some of the jump rates are not bounded away from zero. We mention that the work \cite{pattersonrenger19} assumes ``sufficiently random'' initial conditions to bypass the problem, but we shall focus on a deterministic initial condition.

The problem of vanishing rates is addressed more completely in \cite{pardoux16,pardoux17}, where the authors obtain large-deviation estimates for vanishing rates when the microscopic initial condition allows escaping the degenerate set with positive -- although vanishing in $v$ -- probability. Their approach is based on a {careful} adaptation of the standard tilting argument to obtain the \abbr{LDP} lower bound for processes. In particular, to bypass the problem of jump rates vanishing in some regions of state space, the change of measure performed by the authors depends on the large-deviation scaling parameter $v$, which is inversely proportional to the jump size. This replaces the problem of escaping to an $\mathcal O(1)$ distance from {these degeneracy regions} uniformly in $v$ to the one of escaping to $\mathcal O(1/\sqrt v)$. Their result allows for jump rates that behave as in~\eqref{eq:Kratz Pardoux rate} for $\alpha\in\lbrack0,1/2)$.

In this paper we bypass the change-of-measure technique altogether establishing more direct and concise bounds on the process level.  The main challenge in realizing this strategy is to identify a set of paths occurring with sufficient probability to recover the \abbr{LDP} lower bound while allowing for simple estimation of such probability. The core of the approach consists of showing separate estimates on the total number of jumps and on the types of jumps for paths in such set. This allows us to extend the assumption~\eqref{eq:Kratz Pardoux rate} to any $\alpha\in\lbrack0,1)$ while covering a larger family of processes than the existing literature. In addition, we provide a counterexample showing that our upper bound for the exponent $\alpha$ is optimal: If the rates of the process decay as \eref{eq:Kratz Pardoux rate} for $\alpha \geq 1$ with sufficient uniformity, as we make precise below, the process will no longer be able to escape the degenerate set with finite large-deviation cost.

\subsection{Outline} The paper is structured as follows. In Section~\ref{Sec:not_res} we introduce our notation, we list our assumptions and we state our main result, Theorem~\ref{th:main}, namely the \abbr{LDP}. We also illustrate the generality of our result with some examples. The proof of Theorem~\ref{th:main} is split into two sections: In Section~\ref{Sec:lower} we prove the \abbr{LDP} lower bound, while Section~\ref{Sec:upper} deals with the \abbr{LDP} upper bound. Finally, in Section~\ref{Sec:counterex}, we discuss the optimality of our assumptions on the decay of the rates and in Section~\ref{Sec:constants} we summarize the key quantities of the proof.

\section{Notation and results}\label{Sec:not_res}

We start by giving a concrete example of the properties of  systems we aim to generalize in this paper, introducing some important quantities in an intuitive way:
\begin{example}[Mass action kinetics] \label{ex:massaction}
  In the context of chemical kinetics, one indexes the dimension of state space with a set of \emph{species} $\{\mathsf{S}_i\}$ representing the chemical compounds in the system of interest. To describe interactions between different compounds one defines reactions $r \in \mathcal R$ via $\gamma^{r,\mathrm{in}}, \gamma^{r,\mathrm{out}} \in \mathbb N_0^d$ and
\begin{equation*}
  \sum_{i=1}^d\gamma^{r,\mathrm{in}}_i \,\mathsf{S}_i\longrightarrow \sum_{i=1}^d\gamma^{r,\mathrm{out}}_i\,\mathsf{S}_i.
\end{equation*}
This is to be understood as saying $\gamma^{r,\mathrm{in}}_i$ copies of species $\mathsf{S}_i$ are consumed in each $r$-reaction while $\gamma^{r,\mathrm{out}}_i$ copies are produced.
The reaction rate $\Lambda_r^v$ is specified via a rate constant $k_r\geq 0$ as
  \begin{equ}
    \Lambda_r^{v}(x) := k_r \frac{1}{v^{\sum_i|\gamma_i^{r,\mathrm{in}}|}}\prod_{i=1}^d \binom{vx_i}{\gamma^{r,\mathrm{in}}_i}\gamma^{r,\mathrm{in}}_i!
    \qquad
    \forall x \in \pc{v^{-1}\mathbb N_0}^d\,,
  \end{equ}
where $\binom{\cdot}{\cdot}$ denotes the binomial coefficient. The jump vector is $\gamma^r:=\gamma^{r,\mathrm{out}}-\gamma^{r,\mathrm{in}}$.
 These rates are bounded from above on compact sets, and they converge   to $\lambda_r(x) = k_r \prod_{i=1}^d x_i^{\gamma_i^{r,\mathrm{in}}}$ as $v\to\infty$. It is easy to see that $\Lambda_r^v(x) = 0$ whenever $v x_i < \gamma^{r,\mathrm{in}}$, so that $X^v \in (v^{-1}\mathbb N_0)^d$ almost surely. Consequently, the degenerate set of the limiting dynamics is a subset of the boundary $\partial \mathbb R_{\geq 0}^d$. As we shall illustrate in examples below, different choices of reactions  result in $X^v$ being confined on subsets of $(v^{-1}\mathbb N_0)^d$. \end{example}

We start by defining the set of reachable points of the process. Throughout, we fix a sequence of deterministic initial conditions $\{x_0^v\}$. By the potentially degenerate character of the stochastic dynamics at hand, we reduce the state space $(v^{-1}\mathbb Z)^d$ to the set of reachable points of the process with that initial condition $x_0^v \in (v^{-1}\mathbb Z)^d$:
\begin{equ}
\mathcal{S}_v :=\{x\in  \pc{v^{-1}\mathbb Z}^d\colon \mathbb{P}[\exists t\geq0,\, X^{v}(t)=x \mid X^{v}(0)=x_0^v]>0\}\,.
\end{equ}
Note that, by definition, $\Lambda^{v}_r(x) =0$ whenever $x + v^{-1} \gamma^r \notin \Scal_v$ for any $x \in \mathcal S_v$.
Assuming that in the limit $v \to \infty$, the initial values $x_0^v \in \pc{v^{-1}\mathbb Z}^d$ converge to $x_0\in \mathbb R^d$, we
 write $\displaystyle{\mathcal{S}=\bigcap_{n\in\mathbb{N}}\overline{\bigcup_{v\geq n}\mathcal{S}_v}}$ where the raised line indicates topological closure.
We assume throughout that $\S$ is compact, but discuss how to relax this assumption in \rref{r:exptight}.
 We associate to $\mathcal{S}$ the set of jumps
\begin{equation*}
\mathcal{R}_{\geq 0}:= \left\{ r \in \Rcal \colon  \exists x\in \mathcal{S},~\lambda_r(x)>0\,\right\}
\end{equation*}
Notice that, depending on the sequence of initial conditions, the same Markov process may have a different state space $\SS_v$ and different set of jumps $\Rcal_{\geq 0}$. We refer to \exref{ex:ex2} for a situation where $\R_{\geq 0}\neq \R$. However, by abuse of notation, we will drop the index $\geq 0$ and refer to this set simply as $\Rcal$.

Finally, we define the \emph{degenerate} set -- also referred to as ``boundary'' from its topological characterization in many application domains  -- as $\partial \S := \{x \in \S~:~ \exists r \in \Rcal,~ \lambda_r(x)=0\}$. This represents the set of points where the limiting process  is degenerate, \ie where the classical proof of the large-deviation principle will not immediately apply. Observe that this is a slight abuse of both notation and terminology, since this degenerate set $\partial\S$ may be different from the actual topological boundary of the set $\S$.

%

The following example clarifies the role of the  sequence of initial conditions on the resulting state space.
{\begin{example}\label{ex:ex3}
The mass action kinetics model
$
  \mathsf{A} \leftrightarrow \mathsf{B}$ (see \exref{ex:massaction} for definition of the rates and jump vectors)
with initial conditions $x_0^v = (0, 1+1/v)$ results in $\mathcal S_v = \{x \in (v^{-1} \mathbb Z_{\geq 0})^2~:~x_1+x_2 = 1+1/v\}$ and $\mathcal S = \{x \in \mathbb R_{\geq 0}^2~:~x_1+x_2 = 1\}$.
\end{example}}
{\begin{example}\label{ex:ex2}
The nontrivial effect of different sequences of initial conditions is captured by the system
\begin{align*}
  \mathsf{B} \leftrightarrow 2\mathsf{B} \qquad \mathsf{A} \leftrightarrow \mathsf{2A + B},
\end{align*}
with mass action kinetics (see \exref{ex:massaction}).
For this model, the sequence $x_0^v = (1/v,0)$ results in $\mathcal S_v = (v^{-1} \mathbb Z_{\geq 0})^2 \setminus \{0\}$ and $\SS = \mathbb{R}_{\geq 0}^2$.
However, if $x_0^v = (0,1/v)$ we have $\mathcal R_{\geq 0} = \{B \to 2B, 2B \to B\}$ and the dynamics are restricted to $\mathcal S_v = \{0\}\times (v^{-1} \mathbb Z_{\geq 0}) $ resulting in $\SS = \{x\in \mathbb R_{\geq 0}^2~:~x_1 = 0\}$.
\end{example}}




\subsection{\bf Assumptions} \label{s:assumptions}
To ensure existence of the limit, we require the reaction rates to satisfy some conditions.
\begin{assumption}[Convergence and regularity of rates]\label{a:rate-converg}
We assume the following.
\begin{enumerate}[a)]
\item  There exists a collection of non-negative functions $\{\lambda_r\}_{r\in\Rcal}$, {Lipschitz} continuous on a neighborhood of $\Scal$ in $\mathbb R^d$, such that
\begin{equation*}
\lim_{v \to \infty} \sup_{x \in \Scal_v}\sum_{r\in \Rcal}\abs{{\Lambda_r^{v}(x)} - \lambda_r(x)} = 0.
\end{equation*}
\item {There exists $\aleph > 0$ such that for all $r\in\Rcal$, $v>0$ and $x \in \S_v$} with $\Lambda^{v}_r(x)>0$, we have
\begin{equ}
  \frac {\Lambda_r^{v}(x)}{\lambda_r(x)} \geq \aleph.
\end{equ}
\end{enumerate}
\end{assumption}


%

As we outline in Section~\ref{s:path} the proof of our main theorem is based on the construction of short linear paths moving the process away from the boundary $\partial \Scal$. We now introduce notation to decompose the state space into subsets, in each of which the linear path will be fixed.
More precisely, following a standard approach first presented in \cite{ShwartzWeiss05}, we cover the state space $\S$ with {the relative interior of} finitely many convex, compact sets $\{\Acal_i\}_{i \in \I}$  with {$\Acal_i \subseteq \S$} for all $i \in \I$. We then define  $\partial \mathcal A_i := \partial \Scal \cap \Acal_i$ and let $\J \subseteq \I$ be the subset of indices for which $\partial \Acal_i  \neq \emptyset$.

We now present the assumptions for the lower bound. We assume that, whenever the process starts from an initial condition close to $\partial \Scal$ (where possibly all the rates are zero), one can identify a finite sequence of favorable jumps, which we call the \emph{escape sequence}, that push the process away from the boundary. We further crucially assume that the rates of such favorable jumps do not decay too fast as we approach the boundary.
This is captured by the following counterexample.
{
\begin{example}\label{ex:diverging1d}
Consider the family of Markov jump processes $\{X^v\}$ with generator
\begin{equation}\label{eq:example}
 \LL^v f(x) := v e^{-\frac k {x}}\pc{f(x+v^{-1})-f(x)}\qquad \text{for } f~:~v^{-1}\Nn_0 \to \Rr\,,
\end{equation}
 for any $k >0$. The above process, which for small $x$ is a time-changed version of the autocatalytic process introduced in \exref{ex:ex1}, has only one possible jump in the positive direction with rate $\Lambda^{v}(x) =  e^{-\frac k {x}}$ s.t.  $  \lim_{\rho \to 0}\rho \pc{\inf_{x \colon x\geq \rho } \log\lambda(x)} = -k \neq 0\,. $
 For the sequence of initial conditions $x_0^v = 1/v$, we have $\Scal_v = v^{-1} \mathbb N$ and $\Scal = \RR_{\geq0}$.  Then, for any $w>0$ and $\epsilon \in (0,w/2)$ the probability of observing a realization of $X^v$ in an $\epsilon$-neighborhood of the path $z(s) = s w$ on the interval $s \in [0,1]$ can be trivially estimated as
 \begin{equ}
   \mathbb P\left[\sup_{t\in [0,1]}|X_t^v - z(t)|\leq \epsilon \right] \leq \mathbb P[X_1^v \geq w- \epsilon ]\leq \mathbb P[X_1^v \geq \widetilde w ]
   \end{equ}
   for $\widetilde w = \min(k, w-\epsilon)/2$. Denoting by $\tau_i$ the waiting time between the $i-1$-th and $i$-th jump of the Poisson process $X_t^v$ at $x \in \mathcal S_v$, we further have
 \begin{equation*}
     \mathbb P[X_1^v \geq \widetilde w ]
     \leq \prod_{i = 1}^{\lfloor v \widetilde w\rfloor} \mathbb P[\tau_i \leq 1 ]
     = \prod_{i = 1}^{\lfloor v\widetilde w\rfloor} 1-\exp[-(ve^{-kv/i})]
     \leq \exp\left(\sum_{i=1}^{\lfloor v\widetilde w\rfloor}\left(\log v -kv/i\right)\right). 
\end{equation*}
The rough estimate above yields
 \begin{equation}
   \frac 1 v \log \mathbb P[X_1^v \geq \widetilde w ] \leq
    \frac 1 v \lfloor v\widetilde w\rfloor \log v - k \sum_{i = 1}^{\lfloor v\widetilde w\rfloor} \frac1i
    < (\widetilde w - k) \log v - k (1+\log \widetilde w)
 \end{equation}
which approaches $-\infty$ as $v \to \infty$.

\end{example}
As the example above shows, sufficiently fast decay of the rates of the process $X^v$ implies the divergence of the large-deviation cost of any nontrivial path starting on the boundary $\partial \Scal$. We now proceed to give sufficient assumptions guaranteeing that this does not happen in the general setting. In particular, to capture the idea of escaping a boundary in the higher dimensional setting, for each $\mathcal A_i$ we define directions $w_i$ with some structural properties (\aref{a:escape} a)) allowing to construct linear paths that leave such boundaries. We assume that these paths can be realized as a sequence of jumps $\mathcal E_i$ whose rates do not decay too fast (\aref{a:escape} b)-c)), as to avoid for the realization of such path to have an infinite large-deviation cost.
}
Denoting throughout by $\BB_\rho(x)$ the Euclidean ball of radius $\rho$ in $\Rr^d$  and by $|A|$ the Lebesgue measure of the set $A$, we summarize such assumptions below:

 \begin{assumption}[Escape]\label{a:escape}
There exist constants $\epsilon, \epsilon', \epsilon'' > 0$ such that for each $j \in \I$, the following holds:\\
\begin{enumerate}[a)]
\item If $j \in \Ical^{bd}$ there is a $w_j \in \Rr^d$  with $\|w_j\|=1$ and $\kappa_j\in(0,1)$ such that whenever $x \in \mathcal A_j$ and $\inf_{ y \in \partial \Scal}\|x-y\| < \epsilon'$
and $t \in (0,\epsilon)$
\begin{enumerate}[i)]
\item $t \mapsto \inf_{y\in \partial \Scal}\|x+t\,w_j-y\|$ is increasing, and
\item $ \BB_{t \kappa_j}(x+t\,w_j) \cap \partial \Scal = \emptyset$.
\end{enumerate}
We write $\kappa_- = \min_{j\in \I^{bd}} \{\kappa_j\}$. If $j \in \Ical \setminus \Ical^{bd}$ we choose $w_j = 0$. \label{assit:wj}

\item There exists a finite sequence $\Ecal_j:=(r_{1}, \dots, r_{n_j})$ of jumps in $\Rcal$ with
\begin{equation*}
 {\limsup_{v\to \infty} \frac 1 v \left|{\log \Lambda_{r_{k}}^{v}\pc{x_0^v + v^{-1}\sum_{i = 1}^{k-1} \gamma^{r_{i}}} }\right| = 0,}
 \qquad k = 1, \dotsc n_j\,,
\end{equation*}
and
$\sum_{i=1}^n \gamma^{r_{i}} = \alpha_j w_j$ for some $\alpha_j >0$\,. \label{assit:Ej}

\item Defining  $Z_0^v := \{x_0^v + v^{-1}\sum_{i = 1}^{k} \gamma^{r_{i}}~:~k \in (0, \dots, n_j-1)\}$, for all $r \in \Ecal_j$ and $T>0$
\begin{equation*}
  \lim_{\rho \to 0} \sup_{x \in  \Acal_j\cup \bigcup_v Z_0^v} \int_0^{\rho}\!\left|\log \lambda_r(x + sw_j)\right|\d s = 0\,.
\end{equation*}
\label{assit:escape rate}
%

\item Let $\mathcal W_{j,\kappa''} := \{w_j+y~:~\|y\|< \kappa''\}$. There exists $\kappa''<\kappa_-/3$ such that for any $x \in \mathcal A_j \cup \bigcup_v Z_0^v$ we have that for all $r \in \R$ with
$\lambda_r(x) < \epsilon''$
the rates $\lambda_r(\,\cdot\,)$ are nondecreasing {along paths $t \mapsto x + t w$  for any $w \in \mathcal W_{j,\kappa''}$, for $t\in  (0,\epsilon)$. } 
\label{assit:monotonicity}
\end{enumerate}

\end{assumption}
\noindent It is readily verified \aref{a:rate-converg} and \ref{a:escape} are satisfied by mass action kinetics rates on a convex domain \cite{agazzi18}.

{
\begin{remark}\label{r:decay} While \aref{a:escape} c) is natural in terms of our proof, we note that it is automatically satisfied whenever there exists $\alpha \in [0,1)$ such that
  \begin{equ}\label{e:decay}
    \lim_{\rho \to 0}\rho^\alpha \pc{\inf_{x \in \Acal_j {\cup \bigcup_v Z_0^v} \colon d(x,\partial \Acal_j)> \rho} \log\lambda_r(x)} \to 0 \quad \text{for all } r \in \mathcal E_j\,,
  \end{equ}
  as we mentioned in \eqref{eq:Kratz Pardoux rate}, where $d(A,B) := \inf_{x \in A, y \in B} \|x-y\|$.
  In particular, the above decay condition implies the results of \cite{ShwartzWeiss05} and \cite{pardoux16}. These papers make the stronger assumptions that  \eref{e:decay} holds with $\alpha = 0$ and $\alpha \in [0,1/2)$ respectively.
\end{remark}}

\subsection{The large-deviation principle}
\label{subsec:the ldp}

{For a parameter $T>0$ fixed throughout the paper,} we denote by $D_u(0,T;\RR^d)$ (resp. $D_s(0,T;\RR^d)$) the space of \cadlag functions with values in $\RR^d$ endowed with the topology of uniform convergence (resp. Skorohod topology).
Furthermore we define $\BBp_{[0,T]}(\rho,z)$ to be the ball of radius $\rho$ in $D_u(0,T;\RR^d)$. Finally, for $z\colon [0,T]\mapsto \RR^d$ in the set $\Acal \Ccal(0,T;\RR^d)$ of absolutely continuous functions, we denote by $\dot z$ its time derivative and we will say that $z\in \Acal \Ccal(0,T;\Scal)$ whenever $z(t)\in \Scal$ a.e. $t \in [0,T]$.


To define the standard rate function for the \abbr{LDP} of Markov jump processes in the small noise limit \cite{ShwartzWeiss95} we introduce the action
\begin{equs}
  I_{[0,T]}(z) & := \begin{cases}\int_0^T \inf_{ \pg{\mu\in\RR^\R :~ \sum_{r \in \R} \mu_r \gamma^r = \dot z}}\! \Hcal( \mu|  \lambda(z(t)))\,\d t,
  &\text{ if }z\in  \Acal \Ccal(0,T;\Scal)~,\\
  +\infty&\text{ otherwise. }
  \end{cases} \label{e:lbrf} \\
   \Hcal( \mu| \lambda) &:= \sum_{r \in \R} \lambda_r - \mu_r + \mu_r \log \frac {\mu_r}{\lambda_r}  \label{e:lbrf2}\,,
 \end{equs}
  where $( \lambda(x))_r := \lambda_r(x)$.
We can now state the main result of this paper:

\begin{thm}\label{th:main}
Consider the sequence of Markov jump processes $\{X^{v}\}_{v\in \mathbb{N}}$, fixing a  sequence of deterministic initial conditions $\{x_0^v\}_{v\in \mathbb{N}}^\infty$ with $x_0^v\in (v^{-1} \mathbb Z)^d$ such that  $X^v(0)=x_0^v \to x_0 \in  \Rr^d$.
Furthermore, let \aref{a:rate-converg} and~\ref{a:escape} hold.
Then the sequence $\{X^v\}_{v\in \mathbb{N}}$ satisfies a \abbr{LDP} in $D_u(0,T;\RR^d)$ (resp. $D_s(0,T;\RR^d)$) with good rate function
\begin{equ}
  I_{[0,T]}^{x_0}(z)  := \begin{cases}I_{[0,T]}(z)
  &\text{ if }z(0) = x_0\\
  +\infty&\text{ otherwise,}
  \end{cases}  \label{e:rf2}
 \end{equ}
that is, $I_{[0,T]}^{x_0}$ has compact sublevel sets $D_u(0,T;\RR^d)$ (resp. $D_s(0,T;\RR^d)$), and for all measurable $\Gamma \subseteq D_s(0,T; \RR^d)$,
\begin{align}
  \limsup_{v \to \infty} \frac 1 v \log\px{x_0^v}{X^v \in \Gamma} &\leq - \inf_{x \in \bar \Gamma} I_{[0,T]}^{x_0}(x) \label{eq:LDP UB}\\
  \liminf_{v \to \infty} \frac 1 v \log\px{x_0^v}{X^v \in \Gamma} &\geq - \inf_{x \in \Gamma^o}    I_{[0,T]}^{x_0}(x), \label{eq:LDP LB}
\end{align}
where $\px{x_0^v}{\,\cdot\,}$ denotes the conditional probability on $X_0^v = x_0^v$.
\end{thm}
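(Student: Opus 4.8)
The plan is to establish the LDP upper and lower bounds separately, together with the exponential tightness needed to upgrade from the Skorohod to the uniform topology and to guarantee goodness of the rate function. For the \emph{upper bound}~\eqref{eq:LDP UB}, I would follow the now-standard route: first prove exponential tightness of $\{X^v\}$ in $D_s(0,T;\RR^d)$, using the compactness of $\S$ together with \aref{a:rate-converg}a) to bound the jump rates uniformly on $\S_v$, so that the number of jumps up to time $T$ is dominated by a Poisson variable with $\mathcal O(v)$ mean; a classical estimate then gives the required super-exponential decay of the modulus of continuity. With exponential tightness in hand it suffices to prove the \emph{local} upper bound, i.e.\ for each $z$ and small balls around it, $\limsup_v \tfrac1v\log \px{x_0^v}{X^v \in \BBp_{[0,T]}(\rho,z)} \le -I^{x_0}_{[0,T]}(z)+o_\rho(1)$. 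This is done by the exponential martingale / tilting bound: for any bounded measurable $\theta:[0,T]\to\RR^d$ the process $\exp(v M^v_\theta(t))$ is a supermartingale, where $M^v_\theta$ collects the tilted increments, and optimizing over $\theta$ (piecewise constant, approximating a minimizer in~\eqref{e:lbrf}) recovers the Legendre dual of $\Hcal$. The convergence $\Lambda^v_r\to\lambda_r$ and Lipschitz continuity of $\lambda_r$ let one replace $\Lambda^v_r(X^v(s))$ by $\lambda_r(z(s))$ up to errors that vanish with $\rho$ and $v$; note no lower bound on the rates is needed for the upper bound, so the degeneracy on $\partial\S$ causes no trouble here.

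\textbf{Lower bound.} The heart of the paper is~\eqref{eq:LDP LB}. By the usual reduction it suffices to show that for every $z\in\AC(0,T;\S)$ with $z(0)=x_0$, $I_{[0,T]}(z)<\infty$, and every $\rho>0$,
\begin{equation*}
  \liminf_{v\to\infty}\tfrac1v\log \px{x_0^v}{X^v\in\BBp_{[0,T]}(\rho,z)} \ge -I_{[0,T]}(z).
\end{equation*}
The classical argument tilts the measure around a smooth, bounded-away-from-zero path; the obstruction is that $z$ may start on (or pass through) $\partial\S$, where some or all rates vanish, so the tilted rates are not well-defined and the standard approximation cannot preserve the initial point. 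The strategy advertised in the introduction is to bypass the change of measure entirely near $\partial\S$: decompose $z$ into (i) a short initial \emph{escape segment} built from the finite jump sequence $\Ecal_j$ of \aref{a:escape}, which moves $X^v$ from $x_0^v$ a distance $\mathcal O(1)$ away from $\partial\S$ along direction $w_j$, and (ii) the remaining bulk of the path, which now lives at positive distance from $\partial\S$ and can be handled by the classical tilting estimate. For the escape segment I would define an explicit ``good set'' of trajectories: those that perform exactly the jumps in $\Ecal_j$ in the prescribed order within a short time window, taking no other jumps. The probability of this event factorizes over the $n_j$ jumps, each contributing a factor like $\Lambda^v_{r_k}(\cdot)\,e^{-(\text{rate})\cdot(\text{time})}$; using \aref{a:escape}b) (the rates along $\Ecal_j$ are $e^{o(v)}$) and \aref{a:escape}c)--d) (the rates are controlled, and not-yet-large rates are monotone nondecreasing along the escape direction, so competing jumps stay rare), one shows $\tfrac1v\log\PP(\text{good set})\to 0$, i.e.\ the escape costs nothing to exponential order. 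Here one must also invoke \aref{a:escape}a) so that the straight escape path genuinely leaves a neighborhood of $\partial\S$ and stays in $\S$, and one must be careful to control the intermediate points $Z_0^v$, which is exactly why the assumptions are phrased with $\Acal_j\cup\bigcup_v Z_0^v$.

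\textbf{Gluing and the main obstacle.} After the escape segment the process is at a point $x^v_1$ at distance $\gtrsim \delta>0$ from $\partial\S$; from there I would run the classical tilting lower bound on the time-shifted, space-translated path $z-z(0)+x^v_1$, then invoke the Markov property to multiply the two probabilities, and finally let the escape time and $\delta$ shrink. To make the concatenation valid one needs that any finite-cost path can be approximated, in the sup-norm and with fixed endpoints, by paths that immediately enter the interior $\S\setminus\partial\S$ and stay uniformly away from it except near the (controlled) endpoints — this is the path-surgery lemma the introduction alludes to in Section~\ref{s:path}, and carrying it out while keeping $I_{[0,T]}$ continuous along the approximation is, I expect, the main technical obstacle, since it mixes the geometry of the covering $\{\Acal_i\}$, the structural conditions on $w_j$, and the need to bound $\Hcal(\mu|\lambda)$ when $\lambda$ is small but nonzero. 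A secondary subtlety is the separate bookkeeping promised in the introduction: controlling the \emph{total} number of jumps (to compare with the Poisson normalization and get the $\sum\lambda_r$ term in $\Hcal$) independently of controlling the \emph{empirical frequencies} of each jump type (to get the $\mu_r\log(\mu_r/\lambda_r)$ terms), rather than doing both at once via a single change of measure. Once both bounds and exponential tightness are established, goodness of $I^{x_0}_{[0,T]}$ follows from lower semicontinuity plus the exponential tightness, and the equivalence of the uniform and Skorohod statements follows because exponentially tight families have the same LDP in the two topologies whenever the rate function is supported on continuous paths.
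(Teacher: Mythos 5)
Your high-level architecture matches the paper: exponential tightness from compactness of $\S$ and \aref{a:rate-converg}a), a separate upper bound that does not need positivity of the rates, and a lower bound built by splitting the path into an escape segment near $\partial\S$ glued via the Markov property to a bulk segment where standard tilting applies. The upper bound is done differently in the paper (they lift to the flux process $(X^v,W^v)$, prove an upper bound there with the continuous tilt functional $G(x,w,\zeta)$, and contract; this sidesteps continuity issues with $\ell(x,y)$ at points where $\lambda_r=0$) but your direct tilting route is a legitimate alternative. The lower bound, however, contains a gap that matters.

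The gap is in the escape segment. You define the ``good set'' as the event that $X^v$ performs the jumps of $\Ecal_j$ once, in order, with no other jumps, and then assert this brings $X^v$ to a point $x_1^v$ at distance $\gtrsim\delta>0$ from $\partial\S$. That cannot be: a single pass through $\Ecal_j$ displaces $X^v$ by $v^{-1}\alpha_j w_j=\mathcal O(1/v)$, which vanishes in the scaling limit. The paper's construction repeats the block $\Ecal_j$ roughly $n^v_+ \approx v\,t_\delta/\alpha_j = \mathcal O(v)$ times (see the event $\Xi_j(n^v_+,v)$ in \eqref{e:xij} and the analysis in \lref{p:start}--\lref{l:intconv}), so the displacement is $t_\delta w_j=\mathcal O(\delta)$, macroscopic. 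This is not cosmetic: the cost of $\mathcal O(v)$ consecutive jumps with rates $\Lambda^v_{r_i}$ that can be tiny near $\partial\S$ gives a Riemann sum $\tfrac1v\sum_m\log\lambda_{r_i}(\cdot)$ which converges to $\int_0^{t_\delta/\alpha_j}\log\lambda_{r_i}(x_0+s\alpha_j w_j)\,\d s$ and is \emph{not} zero for fixed $\delta$. Controlling this Riemann sum is precisely where \aref{a:escape}c) (uniform integrability of $\log\lambda_r$ along the escape direction) and \aref{a:escape}d) (monotonicity, to compare shifted lattice points with the continuum integral) enter, and is the reason for the restriction $\alpha\in[0,1)$ in \eqref{e:decay}. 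In your version (one pass of $\Ecal_j$) none of this machinery would be needed, which should be a red flag: the theorem would then hold with no integrability condition at all, contradicting the counterexample in \exref{ex:diverging1d} and Proposition~\ref{p:optimality}.

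A secondary imprecision is in the gluing. After the escape you propose to run classical tilting on $z-z(0)+x_1^v$, but the path $z$ may touch $\partial\S$ at interior times $t>0$, so this translate is in general not uniformly bounded away from the degenerate set. The paper handles this by constructing a \emph{globally} shifted path $\rrr_\delta$ (Section~\ref{s:path}, \eqref{e:rtilde}--\eqref{e:rtilde2}): it partitions $[0,T]$ by the intervals in which $z$ stays in one chart $\Acal_{i_k}$, inserts a short linear shift in direction $w_{i_k}$ of geometrically growing length $\beta^k t_\delta$ at each chart transition, and proves in \lref{l:breakup} that the resulting path is uniformly at distance $\kappa_- t_\delta$ from $\partial\S$ on $[t_\delta,T]$. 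The cost of these extra shifts is controlled in Lemma~\ref{l:43} again via \aref{a:escape}c). You flag this as ``the main technical obstacle'' without resolving it; in the paper it is a separate, nontrivial piece of the argument, comparable in weight to the escape estimate.
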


The compactness of $\S$ and Lipschitz continuity of the rates $\{\lambda_r\}$ directly implies that the rates are bounded and Lipschitz, which in turn implies that the process is exponentially tight. Therefore, the compactness of sublevel sets of $I_{[0,T]}^{x_0}$ comes for free, and the upper bound only needs to be proven for compact sets \cite[Lem.~1.2.18]{Dembo1998}.


A few considerations are now in order.
\begin{remark}\label{r:exptight}
  The boundedness of the rates and compactness of $\S$ can be relaxed. Indeed, exponential tightness can be obtained by other means: Either by Lipschitz continuity of the jump rates \cite{feng2006,ShwartzWeiss95}, or by stability estimates \cite{agazzi18, agazzi20}. Once exponential tightness is guaranteed, one can restrict the analysis to trajectories that do not leave a large enough compact \cite[Theorem 4.4]{feng2006}, effectively reducing the problem to the one with compact state space, which we discuss above.
  %
\end{remark}
We further note that the seemingly restrictive assumption of \emph{deterministic} initial condition also covers the case when such an initial condition is random. This can be done, given the probability conditioned on a fixed initial state from \tref{th:main}, by integrating with respect to the probability distribution $\nu^{v} \in \mathcal M((v^{-1}\mathbb Z)^d)$ of the initial condition, provided that the measure $\nu^{v}$ satisfies some weak regularity and tightness assumptions.
In this case, however, one must check that the conditions in \tref{th:main} hold \emph{uniformly} on a set of positive measure \abbr{wrt} $\nu^{v}$. For a detailed discussion of this procedure when $\nu$ also satisfies a \abbr{LDP} at the same rate we refer to \cite{biggins04}.



 \section{Proof of \abbr{LDP} lower bound}\label{Sec:lower}\label{s:lb}


The general strategy adopted to prove the \abbr{LDP} lower bound result is mainly standard. Without loss of generality, we may assume that $\Gamma$ is open, for any path $\rr \in \Gamma$ one can find a $\delta>0$ such that $\BBp_{[0,T]}(\delta, \rr)\subset \Gamma$, so that
$\px{x}{X^v \in \Gamma} \geq \px{x}{\BBp_{[0,T]}(\delta, \rr)}$ for $\delta >0$ small enough. Hence, it is sufficient to prove that, for any path $\rr \in \Gamma$ the probability that the process $X^{v}$ stays in a neighborhood $\BBp_{[0,T]}(\delta, \rr)$ for any $\delta>0$ is approximately $\exp[-v I^{x_0}_{\lbrack0,T\rbrack}(\rr)]$.
Applying such estimate to a sequence $\{\rr^{(n)}\}_{n =1}^{\infty}$ of paths converging to the minimizer of $I_{[0,T]}^{x_0}$ in $\Gamma$ with small enough $\delta^{(n)}$ proves the desired result. This shows that for the lower bound~\eqref{eq:LDP LB} it is sufficient to prove the following.
\begin{proposition}\label{l:28}  Fix  a path $\rr~:~[0,T]\to \mathcal S$ with a fixed initial condition $\rr(0) = x_0 \in \Scal$
such that {$I^{x_0}_{\lbrack0,T\rbrack}(\rr ) = K < \infty$}. Then, for a sequence of initial conditions $x_0^v \in (v^{-1}\mathbb Z)^d$ converging to $x_0$, under \aref{a:rate-converg} and \aref{a:escape},
\begin{align}\label{e:28}
\lim_{\delta \rightarrow 0}\liminf_{v \rightarrow \infty} \frac{1}{v} \log \px{x_0^v}{X^v \in \BBp_{[0,T]}(\delta, \rr)} \geq - I_{[0,T]}^{x_0}(\rr)\,.
\end{align}
\end{proposition}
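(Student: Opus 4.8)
The plan is to reduce the general path $\rr$ to one that stays uniformly away from $\partial\Scal$ except possibly at the initial time, and for which a direct probabilistic estimate is available. First I would treat the ``interior'' case: suppose $\rr(t)$ stays at distance at least $\eta>0$ from $\partial\Scal$ for all $t\in[0,T]$. Then on the tube $\BBp_{[0,T]}(\delta,\rr)$ with $\delta<\eta/2$ all the rates $\lambda_r$ and $\Lambda_r^v$ are uniformly bounded below and above (using \aref{a:rate-converg}), so the classical tilting/change-of-measure argument of \cite{ShwartzWeiss95} applies verbatim and yields \eqref{e:28}. The point of the present paper is to avoid doing even this by a change of measure, so more precisely I would instead cover $[0,T]$ by finitely many short subintervals, on each of which $\dot\rr$ is well-approximated by a constant velocity $\sum_r\mu_r\gamma^r$, realize that constant-velocity segment by a ``typical'' sequence of Poisson clocks ringing at rates $\approx v\mu_r$, and estimate the probability of staying in the tube directly by the product over subintervals; taking $\delta\to0$ and refining the partition recovers $-I_{[0,T]}(\rr)$ by the same computation that defines the rate function \eqref{e:lbrf}–\eqref{e:lbrf2}. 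The subadditivity over subintervals and the lower semicontinuity of the local action make this bookkeeping routine.

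The substantive step is the \emph{boundary} case, where $\rr(0)=x_0\in\partial\Scal$ (or $\rr$ touches $\partial\Scal$ at finitely many times), since then no uniform lower bound on the rates is available near those times and the change of measure degenerates. Here the plan is: pick the index $j\in\J$ with $x_0\in\Acal_j$, and use \aref{a:escape}\ref{assit:Ej} to prepend to $\rr$ a short ``escape'' excursion. Concretely, for small $\rho>0$ I would replace $\rr$ on $[0,\rho]$ by the concatenation of (i) the discrete escape sequence $\Ecal_j=(r_1,\dots,r_{n_j})$ fired in order starting from $x_0^v$ — which moves the process deterministically (as a sequence of single jumps) along $v^{-1}\sum\gamma^{r_i}=v^{-1}\alpha_j w_j$, i.e. essentially along the direction $w_j$ — followed by (ii) a short linear segment in the direction $w_j$ that brings the process from an $O(v^{-1})$ distance to an $O(\rho)$ distance from $\partial\Scal$, after which one is back in the interior regime and the first part of the argument applies. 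The probability of step (i) is estimated directly: it is a product of $n_j$ factors, each a ``one jump within a short time'' probability at a point where, by \aref{a:escape}\ref{assit:Ej}, $\tfrac1v|\log\Lambda^v_{r_k}|\to0$, so each factor contributes $e^{o(v)}$ and the whole excursion costs $e^{o(v)}$ — zero large-deviation cost, which is exactly what is needed. The probability of step (ii), the linear segment near the boundary, is where \aref{a:escape}\ref{assit:escape rate} enters: along the path $s\mapsto x+sw_j$ the integral $\int_0^\rho|\log\lambda_r(x+sw_j)|\,\d s\to0$, which controls $\int\Hcal(\mu|\lambda(\cdot))$ along that segment and forces its contribution to the action to vanish as $\rho\to0$; \aref{a:escape}\ref{assit:wj} (monotone increase of the distance to $\partial\Scal$ and the cone condition \ref{assit:monotonicity}) guarantees that the tube around this segment stays in $\Scal$ and that the relevant rates are monotone, so the process does not get pushed back into the degenerate region and the tube-probability estimate is valid.

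Having glued the $O(\rho)$ escape piece to the (now interior, hence already handled) remainder of $\rr$, the estimate for the glued path gives
$\liminf_v \tfrac1v\log\px{x_0^v}{X^v\in\BBp_{[0,T]}(\delta,\rr)} \ge -I_{[0,T]}(\rr) - o_\rho(1)$,
and letting first $\delta\to0$, then $\rho\to0$, yields \eqref{e:28}. The main obstacle, and the part requiring genuine care rather than bookkeeping, is step (i)–(ii): showing that the escape excursion can be inserted \emph{without moving the starting point} and \emph{without incurring large-deviation cost}, while keeping the whole modified trajectory within a $\delta$-tube of $\rr$ — this is precisely the ``well-prepared initial condition'' issue flagged in the introduction, and it is handled by the combination of the $o(v)$-bound on the discrete jump probabilities (\aref{a:escape}\ref{assit:Ej}), the vanishing-as-$\rho\to0$ bound on the integrated log-rate along the escape direction (\aref{a:escape}\ref{assit:escape rate}), and the geometric/monotonicity control of \aref{a:escape}\ref{assit:wj},\ref{assit:monotonicity}. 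A secondary technical point is to make the concatenation of a path with discrete single-jump moves precise in $D_u(0,T;\RR^d)$, i.e. to absorb the $O(n_j v^{-1})$ time and space displacement of the discrete excursion into the $\delta$-tube; this is quantitative but straightforward once $\rho$ and then $\delta$ are chosen small.
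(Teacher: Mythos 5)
Your high-level strategy for the initial escape is essentially the one the paper uses: fix the discrete escape sequence $\Ecal_j$, estimate the probability of firing it (repeatedly) as a product of per-jump factors, and use Assumption~\ref{a:escape}\ref{assit:Ej} and the integral bound \ref{assit:escape rate} to show this excursion costs $e^{o(v)}$. The paper formalizes exactly this in Lemma~\ref{p:start} (estimating $\PP[\Xi_j(\nv_+,v)]$) and Lemma~\ref{l:intconv} (Riemann sum vs.\ integral of $\log\lambda_r$ along the escape direction), and the use of the Markov property at time $t_\delta$ to factor the tube probability is also the same.

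There is, however, a genuine gap in your plan: you treat the remainder of $\rr$ on $[\rho, T]$ as being ``back in the interior regime'' once the escape is done, but a path with $I_{[0,T]}^{x_0}(\rr)<\infty$ can return arbitrarily close to, or even touch, $\partial\Scal$ at later times (e.g.\ when the deterministic drift points toward the degenerate set). Prepending an escape at $t=0$ does nothing to help at those later times, so the ``interior'' estimate you invoke for the rest of the path does not apply, and the change-of-measure/tube argument degenerates exactly as it did at $t=0$. The paper's device to fix this is the construction of the shifted path $\rrr_\delta$ in~\eqref{e:rtilde}--\eqref{e:rtilde2} together with Lemma~\ref{l:breakup}: the trajectory is partitioned by the covering $\{\Acal_i\}$, and on each visited cell a \emph{cumulative} shift of magnitude $\beta^k t_\delta$ in the corresponding direction $w_{i_k}$ is applied, so that the whole shifted path on $[t_\delta,T]$ stays at distance $\geq \kappa_- t_\delta$ from $\partial\Scal$. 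This is what makes the standard LDP lower bound applicable to the second factor. Correspondingly, one then needs Lemmas~\ref{l:1}, \ref{l:42}, \ref{l:43} to show that $I_{[0,T]}(\rrr_\delta)\to I_{[0,T]}(\rr)$ as $\delta\to0$ despite all these shifts — this uses the monotonicity condition \ref{assit:monotonicity} in the direction of the shift for small rates, not just continuity. Your plan does not contain an analogue of this global shifting step, and without it the lower bound cannot be closed for paths that re-approach $\partial\Scal$.

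A secondary point: you describe the escape as firing $\Ecal_j$ once (displacing by $O(v^{-1})$) followed by a ``short linear segment'', but the linear segment must itself be realized by the jump process near the boundary, which is exactly the regime you were trying to escape. The paper instead repeats $\Ecal_j$ of order $v t_\delta/\alpha_j$ times to reach a \emph{macroscopic} $O(t_\delta)$ distance from $\partial\Scal$, and the probability estimate for this entire repetition is what Lemmas~\ref{p:start} and \ref{l:intconv} deliver. Your sketch implicitly needs the same thing — the integral bound of \ref{assit:escape rate} is precisely the price of traversing that segment — so the distinction is more one of bookkeeping than of substance, but as written step~(ii) is not an interior-regime estimate.
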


The remainder of this section concentrates on proving such estimate. Our approach to the proof of the above result mimicks the one from \cite{ShwartzWeiss05}. Throughout this section, we fix a path $\rr \in \AC([0,T], \SS)$ starting from $\rr(0)=x_0$ and we approximate $\rr$ with another path $\rrr_\delta$ obtained by perturbing $\rr$, shifting it uniformly away from the regions where the rates are degenerate by a quantity controlled by $\delta$. We then proceed to prove on one hand that
the probability of $X^v$ approximately following $\rrr_\delta$ is accurately described by the rate function $I^{x_0}_{[0,T]}(\rrr_\delta)$, and on the other that the large-deviation cost of the process following the \emph{shifted} path converges towards the one of the original path as $\delta \to 0$.
{The main difficulty to establish the former claim arises with the necessity of keeping the microscopic initial condition of the path fixed, and estimating the probability of the process reaching, in a small time interval, the origin of the shifted path $\rrr$, which is \emph{macroscopically} bounded away from the boundary. On the other hand, to establish the latter convergence property of the rate functional we have to guarantee sufficient regularity of such functional as some of the jump rates decrease to $0$ with $\delta \to 0$.
The remainder of the section is devoted to the realization of this program. In \sref{s:path} we give the explicit construction of the path $\rrr_\delta$ and detail its role in the proof of \pref{l:28}, in \sref{s:jumpbounds} we estimate the probability of the process reaching the origin of the shifted path from its fixed initial condition $x_0$, while in \sref{s:rates} we prove sufficient regularity of the rate functional $I_{\lbrack0,T\rbrack}^{x_0}$. The proof of \pref{l:28} is finally concluded in \sref{s:patching}}.

 \subsection{Construction of the path $\rrr_\delta$}\label{s:path}

 \begin{figure}[t]
  \centering
  \def\svgwidth{.65\textwidth}
  \input{./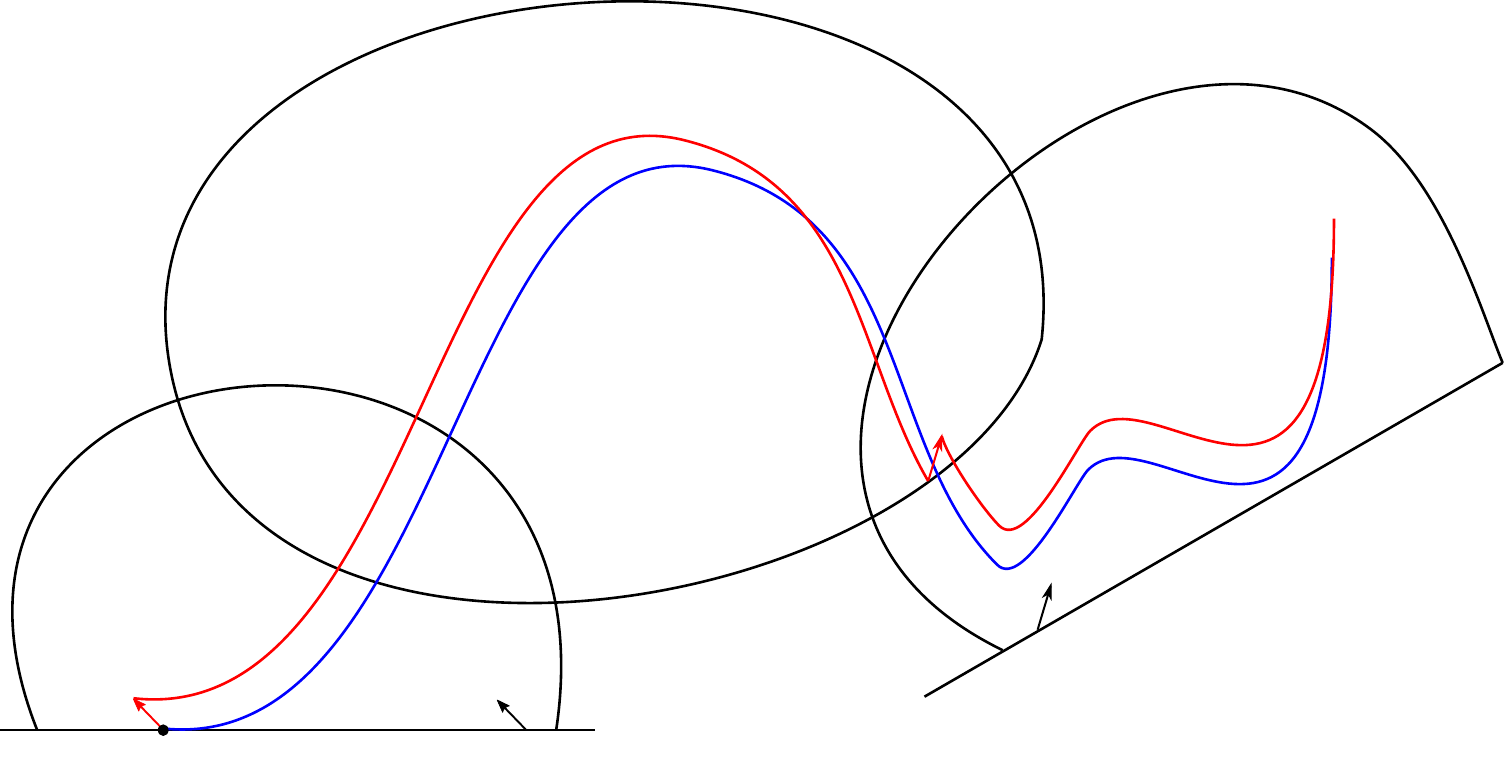_tex}
  \caption{Schematic representation of shifted path.}
  \label{f:scaling}
 \end{figure}

We now construct a macroscopic path that perturbs the original path $\rr \in \mathcal A\C(0,T; \S)$ at a negligible cost and that can only intersect $\partial \mathcal S$ at its initial point. {To do so we recall the covering $\{\mathcal A_i\}$ defined in \sref{s:assumptions}, allowing us to identify, for each $\Acal_i$, directions $w_i$ to move away from the boundary $\partial \mathcal S$. More specifically, this covering allows to partition the path $\rr$ as it enters different regions $\Acal_i$ and to shift it in the corresponding direction $w_i$, thereby guaranteeing that the \emph{shifted} path avoids $\partial \mathcal S$ as we detail below and  as depicted in \fref{f:scaling}.}

To construct $\rrr_\delta$ we introduce the sequence of times $\{\tau_k\}_{k}$ so that $\{\rr(t)~:~t \in [\tau_{k},\tau_{k+1}]\}\subset \Acal_{i_k}$ for all $k$ for a corresponding sequence $\{i_k\}_k$ of indices in $\Ical$.
Then, for fixed $x_0 \in \Scal$, we consider $i_0\in\Ical$ such that $x_0\in\Acal_{i_0}$, so that $\rr(t)\in \Acal_{i_0}$ for all $t \in [0,\tau_1]$. In this interval we define the shifted path
  \begin{align}
    \rrr_\delta(t) := \begin{cases}  x_0 +  t w_j & \text{for } t \in [0, t_\delta] \\
     {\rr(t - t_\delta) - x_0 + t_\delta w_j} & \text{for }t \in ( t_\delta, \tau_1+t_\delta] \end{cases}\label{e:rtilde}
  \end{align}
  for $t_\delta = \frac 1 6\xi \min(\delta , \omega_\rr^{-1}(\delta))$  where $\omega_\rr$ denotes the modulus of continuity of the path $z$ and $\xi >0$ is defined below (see \lref{l:breakup}).
  We then continue defining the path $\rrr_\delta$
  {by shifting the original path $\rr$ infinitesimally on every interval $[\tau_k, \tau_{k+1}]$, sequentially moving it away from $\partial \Acal_{i_k}$ with the corresponding $w_{i_k}$.  More precisely, setting the length of the $k$-th shift time for the perturbed path as $\beta^k t_\delta$ for $\beta>1$ to be chosen later (see \lref{l:breakup}) and denoting by $\Delta_k^\beta := t_\delta \sum_{\ell=0}^{k-1} \beta^\ell $ the cumulative shift time up to transition $k$
   we define the $i$-th shift as
    \begin{equation}\label{e:rtilde2}
      \rrr_\delta(s) :=
      \begin{cases}
        \rrr_\delta(t)+ (s - \tau_i - \Delta_{i}^\beta) w_{i} \qquad &\text{for } s \in (\tau_i + \Delta_i^\beta, \tau_i + \Delta_{i+1}^\beta]\\
        \rrr_\delta(\tau_i+ \Delta_i^\beta)+\beta^k t_\delta w_{i} +  \rr(s- \Delta_{i+1}^\beta
        ) - \rr(\tau_i
        ) &  \text{for } s \in (  \tau_i+\Delta_{i+1}^\beta, \tau_{i+1} + \Delta_{i+1}^\beta].
      \end{cases}
    \end{equation}
}




We now establish some structural properties of the newly constructed path around the original $\rr$, which we recall is fixed throughout this section. This lemma extends \cite[Lemma 3.4]{ShwartzWeiss05}.

\begin{lemma}\label{l:breakup}
{Let Assumptions~\ref{a:rate-converg} and \ref{a:escape} hold and set $\beta := 3/\kappa''$ recalling that $\kappa''<\kappa_- = \min_{j \in \Ical} \kappa_j$ from \aref{a:escape} a).}  Then,
for any
$K > 0$ there is a $J>0$ such that if $I_{[0,T]}(\rr) \leq K$, there are $0 = \tau_0 < \tau_1 <
\dots < \tau_J = T$ and $\{i_k\}$ with $\rr(t) \in \Acal_{i_k}$ for $\tau_{k-1} \leq t \leq \tau_k$.
Furthermore, setting $\xi:= \min(1,(\kappa''/3)^{J+1}/3, \epsilon)$ there exists
 $\delta_\rr>0$ such that for all
$\delta<\delta_\rr$ the path $\rrr_\delta$ from \eref{e:rtilde} and \eqref{e:rtilde2} satisfies
$\sup_{[0,T]} \|\rr-\rrr_\delta\| <  2\delta/3$.\\
Finally, the path $\rrr_\delta$ satisfies $\bigcup_{t \in [t_\delta, T]}\mathcal B_{\kappa_- t_\delta}(\rrr_\delta(t)) \cap \partial \Scal = \emptyset$ and for every  $i\in (1, \dots, J)$  and $a\in \Rr^d\cap \mathrm{span}_{r \in \R}(\gamma^r)$ with $\|a\|<t_\delta \kappa''/2$ there exists $w \in \mathcal W_{i_k,\kappa''}$ such that $\rrr_\delta(\tau_k + \Delta_{k+1}^\beta)+ a = z(\tau_k) + \beta^k t_\delta w $.
\end{lemma}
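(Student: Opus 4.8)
\textbf{Proof plan for Lemma~\ref{l:breakup}.}
The plan is to establish the four assertions roughly in the order they are stated, since each builds on the previous one. First I would produce the partition $0=\tau_0<\dots<\tau_J=T$ together with the indices $\{i_k\}$: this is the content of \cite[Lemma 3.4]{ShwartzWeiss05} adapted to our covering $\{\Acal_i\}_{i\in\I}$. The key point is that the number $J$ of pieces can be bounded \emph{uniformly} over all paths with $I_{[0,T]}(\rr)\le K$. I would argue this by a compactness/continuity estimate: the cost $K$ controls the total length (or a suitable variation) of $\rr$ through the superlinear growth of $\Hcal(\cdot\,|\,\lambda)$ and the boundedness of $\lambda$ on the compact $\Scal$, so $\rr$ has a modulus of continuity and a path length controlled by $K$; since the covering by the (relatively open) interiors of the $\Acal_i$ has a Lebesgue number $\eta>0$ on the compact $\Scal$, any subarc of $\rr$ of length $<\eta$ lies in a single $\Acal_i$, and hence $J\le \mathrm{length}(\rr)/\eta + 1$ suffices. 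This fixes $J$, and therefore also fixes $\xi:=\min(1,(\kappa''/3)^{J+1}/3,\epsilon)$ and $\beta:=3/\kappa''$.

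Next I would verify the uniform closeness $\sup_{[0,T]}\|\rr-\rrr_\delta\|<2\delta/3$. Here one just unwinds the definitions \eqref{e:rtilde} and \eqref{e:rtilde2}. At time $s$ lying in the $k$-th block, $\rrr_\delta(s)$ differs from $\rr$ at the corresponding (time-shifted) argument by a sum of at most $k+1\le J+1$ ``shift'' vectors of the form $\beta^\ell t_\delta w_{i_\ell}$ (each of norm $\beta^\ell t_\delta$) plus the error $\|\rr(s-\Delta)-\rr(s)\|$ coming from re-parametrising time by the cumulative shift $\Delta\le \Delta_{J}^\beta = t_\delta\frac{\beta^{J}-1}{\beta-1}$. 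Both contributions are bounded by a constant (depending on $J$ and $\beta$, hence ultimately on $\kappa''$ through $\beta$) times $t_\delta$, and by construction $t_\delta=\tfrac16\xi\min(\delta,\omega_\rr^{-1}(\delta))$; since $\xi$ carries the factor $(\kappa''/3)^{J+1}$ which dominates $\beta^{J}t_\delta$, and since $\omega_\rr(\omega_\rr^{-1}(\delta))\le\delta$ controls the time-reparametrisation error, choosing $\delta<\delta_\rr$ small makes the total $<2\delta/3$. This is a routine but slightly fiddly geometric-series bookkeeping; I would just record the constants.

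Then I would check the no-boundary-contact property $\bigcup_{t\in[t_\delta,T]}\mathcal B_{\kappa_- t_\delta}(\rrr_\delta(t))\cap\partial\Scal=\emptyset$. On each block, $\rrr_\delta$ is $\rr$ (at a shifted time) translated by a total shift vector whose ``height'' away from the relevant boundary piece $\partial\Acal_{i_k}$ is at least $t_\delta\cdot\beta^{k}\ge t_\delta$ in the direction $w_{i_k}$. Assumption~\ref{a:escape}~a)i) says that moving in direction $w_j$ strictly increases the distance to $\partial\Scal$, and a)ii) gives a quantitative cone estimate $\BB_{t\kappa_j}(x+tw_j)\cap\partial\Scal=\emptyset$; applied with $t\ge t_\delta$ this yields a ball of radius $\ge \kappa_- t_\delta$ around the shifted point missing $\partial\Scal$. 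One must also handle the first block \eqref{e:rtilde} separately (there the shift is exactly $t w_j$ for $t\ge t_\delta$, so a)ii) applies directly), and check the small ``corner'' intervals $(\tau_i+\Delta_i^\beta,\tau_i+\Delta_{i+1}^\beta]$ where the path is purely linear in $w_{i}$ — again a)ii) covers these. The only subtlety is that after the $k$-th shift we are in region $\Acal_{i_{k+1}}$ but carry an accumulated displacement built from the \emph{earlier} directions $w_{i_0},\dots,w_{i_k}$; one checks this accumulated displacement still lies in the admissible cone $\mathcal W_{i_{k+1},\kappa''}$ up to rescaling, which is exactly what the last assertion is designed to encode, so these two parts are proved together.

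Finally, the last assertion: for $i\in\{1,\dots,J\}$ and $a\in\mathrm{span}_{r}(\gamma^r)$ with $\|a\|<t_\delta\kappa''/2$, I must exhibit $w\in\mathcal W_{i_k,\kappa''}$ with $\rrr_\delta(\tau_k+\Delta_{k+1}^\beta)+a=z(\tau_k)+\beta^k t_\delta w$. By \eqref{e:rtilde2} at $s=\tau_k+\Delta_{k+1}^\beta$ we have $\rrr_\delta(\tau_k+\Delta_{k+1}^\beta)=z(\tau_k)+\sum_{\ell\le k}\beta^\ell t_\delta w_{i_\ell}$ (telescoping the definition), so dividing by $\beta^k t_\delta$ gives $w=w_{i_k}+\beta^{-k}t_\delta^{-1}\bigl(a+\sum_{\ell<k}\beta^\ell t_\delta w_{i_\ell}\bigr)$, and it remains to bound the correction $y:=w-w_{i_k}$ in norm by $\kappa''$. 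The sum $\sum_{\ell<k}\beta^\ell t_\delta$ is a geometric series equal to $t_\delta\frac{\beta^{k}-1}{\beta-1}=t_\delta\frac{\beta^k-1}{3/\kappa''-1}$, so $\beta^{-k}t_\delta^{-1}\sum_{\ell<k}\beta^\ell t_\delta\|w_{i_\ell}\|\le \frac{1}{\beta-1}=\frac{1}{3/\kappa''-1}<\kappa''/2$ (using $\kappa''<\kappa_-/3<1$), while $\beta^{-k}t_\delta^{-1}\|a\|<\beta^{-k}\kappa''/2\le\kappa''/2$; hence $\|y\|<\kappa''$, i.e. $w\in\mathcal W_{i_k,\kappa''}$. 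The value of $\beta=3/\kappa''$ is chosen precisely so these two geometric sums each stay below $\kappa''/2$.

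\textbf{Main obstacle.} The genuinely nontrivial step is the \emph{uniform} bound on $J$ over the sublevel set $\{I_{[0,T]}(\rr)\le K\}$ — i.e. showing that $K$ controls the length of $\rr$ and hence how many times it can cross between elements of the cover — and, interlaced with it, keeping track of the constants so that the single parameter $\xi$ (which depends on $J$) simultaneously makes the $2\delta/3$ closeness, the $\kappa_- t_\delta$ tube, and the $\kappa''$ cone conditions all hold. The rest is definition-chasing and geometric-series arithmetic; the role of \aref{a:escape}~a) is purely to convert ``shift in direction $w_j$'' into quantitative distance-to-$\partial\Scal$ lower bounds, and \aref{a:escape}~d) (the set $\mathcal W_{j,\kappa''}$) is exactly what makes the accumulated-displacement-lies-in-a-cone bookkeeping close.
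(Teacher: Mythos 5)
Your proposal is essentially the paper's proof: the partition and finiteness of $J$ via control of the path by the sublevel set $\{I\le K\}$ (the paper cites Lemmas 3.4–3.5 of Shwartz–Weiss, giving a uniform lower bound $\tau_-$ on the block lengths, while you rephrase this via a Lebesgue number of the cover and an $L^1$ bound on $\dot\rr$ — both rest on the same superlinear-growth/equicontinuity fact); the $2\delta/3$ closeness by the geometric-series estimate; the $\kappa_- t_\delta$-tube property by combining \aref{a:escape}~a) with the fact that the $k$-th shift dominates the accumulated earlier shifts (the paper formalises this as an induction on $k$, and the key factor is exactly the $(1-\kappa''/2)$ you identify implicitly by proving the tube and cone claims together); and the cone condition by telescoping $\rrr_\delta$ and bounding the normalised correction by $\kappa''/2+\kappa''/2$. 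Your bookkeeping ($\frac{1}{\beta-1}<\kappa''/2$ and $\beta^{-k}\|a\|/t_\delta<\kappa''/2$) matches the paper's estimate~\eqref{e:totalshift}, so the argument closes in the same way.
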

We defer the proof of this lemma to the end of the section and proceed to present the central estimate allowing us to bound the probability in \eref{e:28} from below --- in the sense of large deviations. To do so, defining throughout $\beta := 3/\kappa''$ and $\xi:= \min(1,(\kappa''/3)^{J+1}/3, \epsilon)$ so that \lref{l:breakup} holds, by triangle inequality it is sufficient to consider the event $\sup_{t \in[0,T]} \|\rrr_\delta(t)- X^v(t)\| < \delta/3$. Furthermore,  $\rrr_\delta \in \mathcal S$ and for any $\dpp\leq \delta' \leq \delta/3$  we can further bound the event of interest from below as follows:
  \begin{align}
    \px{x_0^v}{X^v \in \BBp_{[0,T]}(\delta, \rr)}
    & \geq \px{x_0^v}{X^v \in \BBp_{[0,T]}(\delta/3, \rrr_\delta)}\notag\\
    & \geq\px{x_0^v}{\{X^v \in \BBp_{[0,t_\delta]}(\delta/3, \rrr_\delta)\} \cap \{X^v(t_\delta)\in \BB_{ \dpp}(x_0+t_\delta w_j)\} \cap \{X^v \in \BBp_{[t_\delta, T]}(\delta^\prime, \rrr_\delta)\}} \notag\\
    & \geq \px{x_0^v}{\{X^v \in \BBp_{[0,t_\delta]}(\delta/3, \rrr_\delta)\}\cap \{X^v(t_\delta) \in \BB_{\dpp}(x_0+t_\delta w_j )\} }\label{e:product}\\
    & \qquad \qquad \qquad \qquad \qquad \qquad \qquad \times \inf_{y \in \BB_{\dpp}( x_0+t_\delta w_j)}\p{X^v \in \BBp_{[t_\delta, T]}(\delta^\prime, \rrr_\delta)~|~X^v(t_\delta)=y }\,,\notag
  \end{align}
  where in the last inequality we have used the Markov property. In the remainder of the paper we set \begin{equ}
  \delta^\prime  := \kappa_- t_{\delta}/3\qquad \text{and} \qquad\delta'' := t_\delta \kappa'' < \delta'
\end{equ}
   where recalling that $\kappa_- = \min_{j\in \mathcal I^{bd}}\{\kappa_j\} < 1$ and that $\xi < 1$ we must have $\delta'\leq \delta/3$. We note that this choice is compatible with the definition of $\kappa''$ from \aref{a:escape}.
\begin{remark}  \label{r:choiceofdelta'} We pause briefly to motivate our choice of $\delta'$ and $\delta''$: These small parameters are chosen in such a way as to guarantee that the event in the second term in the last line of \eref{e:product} only contains paths that are uniformly bounded away from $\partial \Scal$, as captured by \lref{l:breakup} and depicted in \fref{f:fig2}.
\end{remark}
\noindent The desired result is obtained by showing that
\begin{align}
  &\lim_{\delta \to 0}\liminf_{v \to \infty}  \frac 1 v \log \px{x_0^v}{\{X^v \in \BBp_{[0,t_\delta]}(\delta/3, \rrr_\delta)\}\cap \{X^v(t_\delta) \in \BB_{\dpp}(x_0+w_j t_\delta)\} } = 0\qquad\text{and}\label{e:product2}\\
  &\lim_{\delta \to 0}\liminf_{v \to \infty}  \frac 1 v \log \inf_{y \in \BB_{\dpp}( x_0+t_\delta w)}\p{X^v \in \BBp_{[t_\delta, T]}(\delta^\prime, \rrr_\delta)~|~X^v(t_\delta)=y } \geq - I^{x_0}_{\lbrack0,T\rbrack}(z).\label{e:product3}
\end{align}
 The term in \eref{e:product2} is bounded from below in \sref{s:jumpbounds}, while in \sref{s:rates} and \sref{s:patching} we formulate and combine the estimates in different $\mathcal A_j$ to bound \eref{e:product3}, thereby proving the desired \abbr{LDP} lower bound.



\subsection{Jump bounds} \label{s:jumpbounds}

\begin{figure}[t]
 \centering
 \def\svgwidth{.6\textwidth}
 \input{./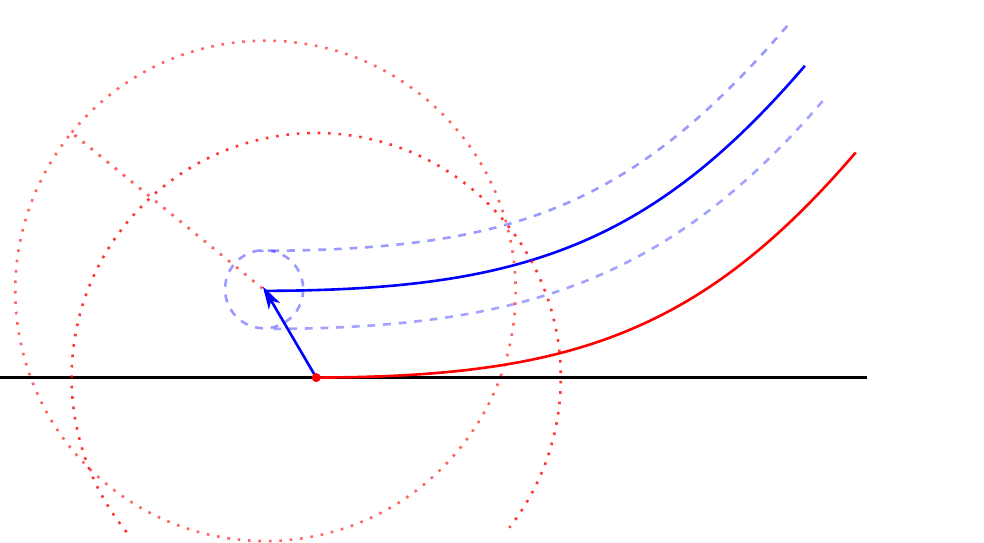_tex}
 \caption{Schematic representation of the desired effect for the choice of parameters $\delta, \delta', \delta''$ summarized in \lref{l:breakup}. For a fixed path $\rr$ and $\delta > 0$ by our choice of  $\xi>0$ and consequently $t_\delta$ we have that a neighborhood of the path $x_0+w_{i_0} t$ (blue arrow) is contained in $\bigcap_{t \in (0, t_\delta)} \mathcal B_{\delta/3}(x_0+t w_{i_0})$ (the intersection of the two dotted balls). By our choice of $\delta'(\xi, \kappa_-)>0$,  we find that a $\delta'$-neighborhood (dashed blue region) of the path $\rrr_\delta$ (blue line) on $[t_\delta,T)$ never  intersects $\partial S$ while $\mathcal B_{\delta'}(x_0+w_{i_0}t_\delta) \subset \bigcap_{t \in (0, t_\delta)} \mathcal B_{\delta}(z(t))$. The shaded blue region represents $\mathcal B_{\delta''/2}(x_0+t_\delta w_j)$. }
 \label{f:fig2}
\end{figure}
{We now proceed to bound from below the first term on the last line of \eref{e:product}. To do this we consider a convenient subset of outcomes obtained by fixing a precise sequence of jumps (but not the \emph{times} of the jumps) that the process undergoes in the interval $(0,t_\delta)$.
To define such an event, we recall the definition of the sequence $\Ecal_j$ of $n_j$ jumps leading away from $\partial \Acal_j$ and we denote the event of repeating the sequence of jumps in $\Ecal_j$ $n$ times by
\begin{equ}\label{e:xij}
  \Xi_j(n,v) := \bigcap_{m=0}^{n-1}\bigcap_{i=1}^{n_j} \left\{X^v(\sigma_{m n_j+i}) - X^v(\sigma_{m n_j+i}-) = v^{-1}\gamma^{r_{i}}\right\}\,,
\end{equ}
where, for all $k\in\NN$, $\sigma_k$ is the time of the $k$-th jump of the Markov process $X^v$.
Furthermore, we note that by our choice of $t_\delta < 1/6\delta$  we must have $\{x_0 + t w_j~:~t \in (0,t_\delta)\} \subset \bigcap_{t \in (0, t_\delta)} \mathcal B_{\delta/3}(x_0 + w_j t)$, as depicted in \fref{f:fig2}.
Thus for $\nv_+ := \lfloor \frac{v}{\alpha_j}t_\delta \rfloor$ and $\nv_- := \lceil \frac{v}{\alpha_j}(t_\delta - \delta^\prime)\rceil$ we have for all $v$ large enough that $\nv_- < \nv_+$\,,
\begin{equ}\label{e:subset1}
  \{X^v \in \BBp_{[0,t_\delta]}(\delta/3, \rrr_\delta)\} \supseteq \Xi_j(\nv_+,v)  \cap \{\sigma_{\nv_+ n_j}> t_\delta\}\,,
\end{equ}
and also
\begin{equ}
 \{X^v(t_\delta) \in \BB_{\delta'}(x_0+w_j t_\delta)\}   \supseteq \Xi_j(\nv_+,v) \cap \{\sigma_{\nv_+ n_j}> t_\delta\} \cap\{\sigma_{\nv_- n_j} \leq t_\delta\}\,.
\end{equ}
Note that, as $v$ and $n$ increase the paths in $\Xi_j(n,v)$ have ranges concentrating on a straight line segment in $\RR^d$
However, there is no information about the speed at which they move along this line segment.
This degree of freedom will be sufficient to establish the \abbr{LDP} lower bound, as we shall see next.
In preparation for the next result observe by \aref{a:rate-converg}a) that $\limsup_{v \to \infty}\sup\left\{ \sum_{r\in \mathcal R} \Lambda_r^v(x) \colon x \in \Bcal_{2 t_\delta}(x_0)\right\} < \infty$ and define $\bar t_\delta(\alpha, \epsilon''):=\min_j\{\alpha_j/n_j, \epsilon^{\prime\prime} / \max_{r\in\Rcal}\Lip(\lambda_r)\}$.

\begin{lemma}\label{p:start}
Suppose $x_0 \in  \Acal_j$ and let \aref{a:rate-converg} and~\ref{a:escape} hold and that $\delta$ is small enough that $t_\delta < \bar t_\delta(\alpha,\epsilon'')$, then for
$\bar\lambda > \max\{ 1, \limsup_{v \to \infty} \sup\left\{ \sum_{r \in \mathcal R} \Lambda_r^v(x) \colon x \in \Bcal_{2 t_\delta}(x_0)\right\}\}$
we have 
\begin{multline*}
\liminf_{v \to \infty} \frac1v \log\PP\left[X^v(t_\delta) \in \BB_{\dpp}(x_0+t_\delta w_j)\, ,\, \Xi_j(\nv,v)\,,\, \sigma_{\nvp n_j}> t_\delta \middle| X^v(0) = x_0^v \right] \\
\geq
-t_\delta \left(\frac{n_j}{\alpha_j}\log\left(\frac{n_j}{\alpha_j \overline{\lambda}}\right) - \frac{n_j}{\alpha_j} + \overline{\lambda}\right)
+\sum_{i = 1}^{n_j}\int_0^{t_\delta/\alpha_j}\log\left(\lambda_{r_{i}}\left(x_0 +s \alpha_j w_j\right)\right) \d s
+t_\delta \frac{n_j}{\alpha_j} \log\aleph \,.
\end{multline*}

\end{lemma}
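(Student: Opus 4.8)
The plan is to lower-bound the probability of the displayed event by restricting to an even more specific family of trajectories: those that repeat the escape sequence $\Ecal_j$ exactly $\nvp = \lfloor \tfrac{v}{\alpha_j} t_\delta\rfloor$ times, in order, with \emph{no other jumps occurring}, and with the jump times arranged so that the $\nv_-$-th cycle has completed by time $t_\delta$ but the $\nvp$-th has not (so that the terminal position lands in $\BB_{\dpp}(x_0 + t_\delta w_j)$, using that each full cycle displaces the process by $v^{-1}\alpha_j w_j$ and that $\nvp - \nv_- $ cycles span a displacement of order $\delta''$, controlled via \lref{l:breakup} and the choice $\delta'' = t_\delta\kappa''$). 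Because the process is a time-inhomogeneous pure-jump process whose rates at the relevant states are, after the $(mn_j+i)$-th jump, essentially $v\,\Lambda^v_{r_i}\!\big(x_0^v + v^{-1}\sum_{\ell<i}\gamma^{r_\ell} + m\,v^{-1}\alpha_j w_j\big)$, the probability of realising this ordered sequence of $\nvp n_j$ specific jumps (with the clock not yet having reached $t_\delta$) is a product of exponential-waiting-time factors that I would bound below by a standard Poisson-type estimate: the probability that a sum of $N$ independent exponentials with rates $\le v\bar\lambda$ lands in a prescribed window $[\,\cdot\,,t_\delta]$ of length $\gtrsim \delta'$ is at least of order $e^{-v\bar\lambda t_\delta}(v\bar\lambda t_\delta/\delta')^{\text{-ish}}$ — more carefully, $\PP[\sigma_{\nvp n_j}\le t_\delta,\ \text{all jumps as prescribed}]$ factors as (probability each successive jump is the prescribed type, given the holding time) times (probability the total elapsed time is $\le t_\delta$ with the $\nv_-$ cycle done), and the "correct type" probabilities are ratios $\Lambda^v_{r_i}(x)/\sum_r\Lambda^v_r(x)$.

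Concretely, I would proceed as follows. First, condition on the sequence of states visited being exactly the prescribed one; on that event the holding time at the $(mn_j+i)$-th state is exponential with parameter $v\,\Sigma^v_{m,i}$ where $\Sigma^v_{m,i}:=\sum_{r}\Lambda^v_r(\cdot)\le\bar\lambda$, and the probability that the next jump is the prescribed $r_i$ is $\Lambda^v_{r_i}(\cdot)/\Sigma^v_{m,i}$, independent of the holding time. Second, multiply out: the "right type every time" contribution gives $\prod_{m,i}\Lambda^v_{r_i}(\cdot)/\Sigma^v_{m,i}\ge \prod_{m,i}\Lambda^v_{r_i}(\cdot)/\bar\lambda$, and here I use \aref{a:rate-converg}b) to replace $\Lambda^v_{r_i}$ by $\aleph\,\lambda_{r_i}$ from below, turning $\tfrac1v\log$ of the product of the $\lambda_{r_i}$ factors into a Riemann sum that converges, as $v\to\infty$, to $\sum_{i=1}^{n_j}\int_0^{t_\delta/\alpha_j}\log\lambda_{r_i}(x_0+s\alpha_j w_j)\,\d s$ — this convergence is legitimate because \aref{a:escape}c) (equivalently \eref{e:decay}) guarantees $\log\lambda_{r_i}$ is integrable along the escape direction, so the Riemann sums of the (possibly unbounded) integrand converge; the $\aleph$ factors contribute $\nvp n_j\cdot\tfrac1v\log\aleph\to t_\delta\tfrac{n_j}{\alpha_j}\log\aleph$. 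Third, the timing constraint $\{\sigma_{\nv_- n_j}\le t_\delta<\sigma_{\nvp n_j}\}$: bounding all rates above by $v\bar\lambda$ and below suitably, the probability that a sum of $N=\nvp n_j$ independent exponentials lands in the required time window is at least $c\,e^{-v\bar\lambda t_\delta}\cdot(\text{poly in }v)$; taking $\tfrac1v\log$ and $\liminf_{v\to\infty}$ kills the polynomial and the constant and yields the term $-t_\delta\big(\tfrac{n_j}{\alpha_j}\log\tfrac{n_j}{\alpha_j\bar\lambda}-\tfrac{n_j}{\alpha_j}+\bar\lambda\big)$, which is exactly $-t_\delta\,\Hcal$ evaluated at mean jump-rate $\tfrac{n_j}{\alpha_j}$ against reference rate $\bar\lambda$. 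Summing the three contributions gives the claimed bound. The condition $t_\delta<\bar t_\delta(\alpha,\epsilon'')$ ensures on one hand that the whole escape stays inside $\Bcal_{2t_\delta}(x_0)$ where $\bar\lambda$ dominates the total rate, and on the other (via $\epsilon''/\max_r\Lip(\lambda_r)$) that \aref{a:escape}d)'s monotonicity applies so the $\lambda_{r_i}$ along the path are controlled.

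The main obstacle, I expect, is the timing estimate done at the right level of precision: one needs the probability that the partial sums $\sigma_{\nv_- n_j}$ and $\sigma_{\nvp n_j}$ of order-$N$ many $\mathrm{Exp}(v\Sigma^v_{m,i})$ variables straddle $t_\delta$ in the prescribed way to be \emph{exponentially} of the right order $e^{-v\bar\lambda t_\delta}$ up to sub-exponential corrections — not smaller. The clean way is to note that, since the total number of jumps $N$ is deterministic ($\nvp n_j$) and $N/v\to \tfrac{n_j}{\alpha_j}t_\delta$, the event is (up to the type constraints) precisely that a Poisson-like clock running at rate $\approx v\bar\lambda$ produces between $\nv_- n_j$ and $\nvp n_j$ events in time $t_\delta$; a Stirling/Cramér computation for the Poisson distribution with parameter $v\bar\lambda t_\delta$ evaluated at the atypical value $\tfrac{n_j}{\alpha_j}t_\delta\,v$ gives exponent $-v t_\delta(\tfrac{n_j}{\alpha_j}\log\tfrac{n_j}{\alpha_j\bar\lambda}-\tfrac{n_j}{\alpha_j}+\bar\lambda)$ to leading order, which is the target. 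One must be a little careful that $\Sigma^v_{m,i}$ is not exactly $\bar\lambda$ but only $\le\bar\lambda$ and $\ge$ some positive constant uniformly (which holds since along the escape path the rate $\Lambda^v_{r_i}$ itself is bounded below via \aref{a:escape}b) and \aref{a:escape}c)), so the Poisson comparison is two-sided up to constants; the lower bound direction — which is all we need — only requires the upper bound $\bar\lambda$ on the rates together with a fixed lower bound to guarantee the straddling window is hit with non-negligible probability. Everything else (Riemann-sum convergence, the $\aleph$ bookkeeping, staying in $\Bcal_{2t_\delta}(x_0)$) is routine given the assumptions already in force.
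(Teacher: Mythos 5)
Your decomposition (condition on the jump-type sequence, bound the ``right type every time'' product using Assumption~\ref{a:rate-converg}b) and the Riemann sum convergence from Assumption~\ref{a:escape}c), then estimate the timing window) is exactly the structure of the paper's proof. But there is a genuine gap in the timing step. You write that the lower-bound direction ``only requires the upper bound $\bar\lambda$ on the rates together with a fixed lower bound'' on the total jump rate $\Sigma^v_{m,i}$ at the escape states, and you claim this fixed lower bound follows from Assumptions~\ref{a:escape}b) and~\ref{a:escape}c). That claim is false: the whole point of this lemma is the case $x_0 \in \partial\mathcal{A}_j$, where the escape-sequence rates vanish at $x_0$. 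Assumption~\ref{a:escape}b) only controls $\frac1v\log\Lambda^v_{r_k}\to 0$, which is consistent with, say, $\Lambda^v_{r_k}\approx e^{-\sqrt{v}}\to 0$; Assumption~\ref{a:escape}c) only gives integrability of $\log\lambda_r$ along the escape direction, not a pointwise lower bound. So $\Sigma^v_{m,i}$ can be arbitrarily small near $x_0$, the holding times at the early escape states can be arbitrarily long, and your Poisson comparison with rate $v\bar\lambda$ does not follow directly.

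The paper resolves this with a device you missed: it introduces a \emph{dummy jump} $r^\star$ with $\gamma^{r^\star}=0$ and $\Lambda^v_{r^\star}(x) = (\bar\lambda - \sum_r\Lambda^v_r(x))\,\mathbb{1}\{x\in\mathcal{B}_{2t_\delta}(x_0)\}$, so that the \emph{total} jump rate of the augmented process $\bar X^v$ is identically $v\bar\lambda$ on the relevant region, while $\bar X^v(t)=X^v(t)$ almost surely. On the event $\bar\Xi_j(\nvp,v)$ the first $\nvp n_j$ jumps are exactly the prescribed real jumps (and no dummy jumps), so the type probability at each step is exactly $\Lambda^v_{r_i}(\cdot)/\bar\lambda$, and the jump-count process $Y^v$ by time $t_\delta$ is exactly $\mathrm{Poisson}(t_\delta v\bar\lambda)$, independent of the types. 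This gives the clean identity $\PP[\cdot\mid\bar\Xi_j(\nvp,v)]=\PP[\nv_- n_j\le Y^v<\nvp n_j]$, and the Poisson deviation computation then yields the $-t_\delta(\frac{n_j}{\alpha_j}\log\frac{n_j}{\alpha_j\bar\lambda}-\frac{n_j}{\alpha_j}+\bar\lambda)$ term. One could in principle argue without the dummy jump by exploiting a cancellation between the denominators $\Sigma^v_{m,i}$ in the type probabilities and the same quantities appearing in the density of the sum of exponential waiting times, but that is delicate, and your write-up does not do it; as written, the timing step rests on a lower bound that the assumptions do not supply.
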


To prove the above result, defining throughout $\tilde \gamma^{(i)} : = \sum_{k = 1}^{i-1} \gamma^{r_k}$ we introduce the following lemma relating the Riemann sum of $\log \lambda_r$ along the escape sequence defining $\Xi_j$ to the corresponding integral.
\begin{lemma}\label{l:intconv}
Suppose $x_0 \in  \Acal_j$, let \aref{a:rate-converg} and~\ref{a:escape} hold, then for all $\delta$ such that $t_\delta < \bar t_\delta(\alpha,\epsilon'')$ and $r_i \in \mathcal E_j$ we have
  \begin{equ}\label{e:moreprecise}
    \liminf_{v \to \infty} \frac1v \sum_{m=0}^{\nv_+} \log \left({\lambda_{r_{i}}\left(x_0^v+ \frac m v  \alpha_j w_j + v^{-1}\tilde \gamma^{(i)}\right)}\right) \geq \int_0^{t_\delta/\alpha_j} \log\left(\lambda_{r_{i}}\left(x_0 +s \alpha_j w_j\right)\right) \d s\,.
  \end{equ}
\end{lemma}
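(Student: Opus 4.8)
The plan is to recognize the left-hand side as (essentially) a Riemann sum for the integral on the right, and to make this rigorous despite two complications: the rate $\lambda_{r_i}$ may vanish (so $\log\lambda_{r_i}$ is only an extended-real-valued, possibly unbounded-below function near $\partial\Acal_j$), and the summand is evaluated at the discretized initial condition $x_0^v$ plus lattice corrections $v^{-1}\tilde\gamma^{(i)}$ rather than exactly on the segment $s\mapsto x_0+s\alpha_j w_j$. First I would fix $r_i\in\Ecal_j$ and use \aref{a:escape}\ref{assit:escape rate}, which guarantees precisely that $\lim_{\rho\to0}\sup_{x\in\Acal_j\cup\bigcup_v Z_0^v}\int_0^\rho|\log\lambda_{r_i}(x+sw_j)|\,\d s=0$; in particular the integral $\int_0^{t_\delta/\alpha_j}\log\lambda_{r_i}(x_0+s\alpha_jw_j)\,\d s$ is finite (and for $t_\delta$ small it is close to $0$), so the right-hand side is well-defined. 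The Lipschitz continuity of $\lambda_{r_i}$ on a neighborhood of $\Scal$ from \aref{a:rate-converg}a), together with $\|x_0^v-x_0\|\to0$ and $\|v^{-1}\tilde\gamma^{(i)}\|\to0$, lets me replace $x_0^v+\tfrac mv\alpha_jw_j+v^{-1}\tilde\gamma^{(i)}$ by the point $x_0+\tfrac mv\alpha_jw_j$ on the segment up to an error in the argument of size $o(1)$ uniformly in $m\le\nv_+$.

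The main step is then a Riemann-sum convergence argument for $\frac1v\sum_{m=0}^{\nv_+}\log\lambda_{r_i}(x_0+\tfrac mv\alpha_jw_j)$. Writing $f(s):=\log\lambda_{r_i}(x_0+s\alpha_jw_j)$, which is continuous and bounded above on $[0,t_\delta/\alpha_j]$ (rates are bounded on the compact $\Scal$) and satisfies $\int_0^{t_\delta/\alpha_j}|f|<\infty$, I want $\frac1v\sum_{m=0}^{\nv_+}f(m/v)\to\frac1{\alpha_j}\int_0^{t_\delta}f(s/\alpha_j)\,ds$... more precisely, with the $\alpha_j$ scaling built in, $\frac1v\sum_m f(\tfrac mv\alpha_j)\to\frac1{\alpha_j}\int_0^{t_\delta}f(u)\,du=\int_0^{t_\delta/\alpha_j}f(s)\,ds$ where the last equality is the substitution $u=s\alpha_j$. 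For the part of $f$ that is bounded (away from the boundary, say on $\{d(x,\partial\Acal_j)\ge\rho\}$), standard Riemann-sum convergence for continuous functions applies. For the part near the boundary I only need the \emph{liminf} inequality, which is favorable: since $f$ is bounded above, by Fatou-type reasoning (or by splitting off a fixed small boundary layer of length $\rho$ and using \aref{a:escape}\ref{assit:escape rate} to bound that layer's contribution to the integral by a quantity $\eta(\rho)\to0$, while the corresponding finitely-many Riemann-sum terms contribute at most $\rho\cdot(\text{upper bound})/\alpha_j+o(1)$ on the sum side — here one must be slightly careful that the discretized points could land exactly where $\lambda_{r_i}=0$, but \aref{a:rate-converg}b) combined with \aref{a:escape}\ref{assit:Ej} ensures the relevant points on the escape sequence have $\Lambda^v_{r_i}>0$, and near-boundary terms are controlled by the integrability). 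Taking $\rho\to0$ after $v\to\infty$ closes the estimate.

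The part I expect to be the main obstacle is controlling the contribution of the near-boundary terms of the Riemann sum, i.e. the small number of indices $m$ for which $x_0+\tfrac mv\alpha_jw_j$ is within $\rho$ of $\partial\Acal_j$ and for which $\log\lambda_{r_i}$ may be very negative: one must show that $\frac1v$ times the sum of these (at most $O(v\rho)$) terms does not drop below the corresponding integral by more than $\eta(\rho)$. The clean way is to exploit that along the direction $w_j$ the distance to $\partial\Scal$ is increasing (\aref{a:escape}\ref{assit:wj}i)) and, by \aref{a:escape}\ref{assit:monotonicity}, that $\lambda_{r_i}$ is itself nondecreasing along such directions once it is small; hence $f$ is nondecreasing on an initial subinterval $[0,\rho_0]$, so on that subinterval the Riemann sum \emph{underestimates} — wait, a nondecreasing function has left-endpoint Riemann sums \emph{below} the integral, which is the wrong direction, so instead I would use right-endpoint sums / shift the index by one, absorbing the single-term discrepancy $\frac1v f(\rho_0)=o(1)$, after which monotonicity gives $\frac1v\sum f(\tfrac mv\alpha_j)\ge\int_0^{\rho_0/\alpha_j}f(s)\,ds$ on the boundary layer directly; combined with the uniform Riemann-sum convergence on the complementary compact region, this yields \eqref{e:moreprecise}.
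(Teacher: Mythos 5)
There is a genuine gap in the reduction from the $x_0^v$-indexed sum to the $x_0$-indexed one. You propose to use the Lipschitz continuity of $\lambda_{r_i}$ from \aref{a:rate-converg}a) to ``replace'' $x_0^v + \tfrac{m}{v}\alpha_j w_j + v^{-1}\tilde\gamma^{(i)}$ by $x_0 + \tfrac{m}{v}\alpha_j w_j$. That control is only on the distance between the \emph{arguments}; it says nothing about $\log\lambda_{r_i}$ near the zero set of $\lambda_{r_i}$, which is exactly the regime where the lemma has content. If $\lambda_{r_i}(a)$ and $\lambda_{r_i}(b)$ are both of size comparable to $\|a-b\|$, the two logarithms can differ by $O(1)$ or worse, and there are of order $\rho v$ such indices $m$ in the boundary layer, so the cumulative error does not obviously vanish after multiplying by $1/v$; indeed, if the (fixed) direction $x_0^v - x_0 + v^{-1}\tilde\gamma^{(i)}$ happens to point \emph{toward} $\partial\Acal_j$, the $x_0^v$-indexed value is \emph{smaller} and the inequality you need goes the wrong way term by term. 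Your invocation of \aref{a:escape}\ref{assit:monotonicity} at the end only compares a right-endpoint Riemann sum of $s\mapsto\log\lambda_{r_i}(x_0+s\alpha_jw_j)$ with its own integral; it does not bridge the $x_0^v$-to-$x_0$ comparison, because the displacement vector $x_0^v - x_0 + v^{-1}\tilde\gamma^{(i)}$ need not be a positive multiple of a direction in $\mathcal W_{j,\kappa''}$.

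The paper's proof closes exactly this hole. It never makes the Lipschitz replacement. Instead it introduces the index $\tnv$ defined as the smallest $m$ for which the normalized vector $\big(x_0^v + mv^{-1}\alpha_jw_j + v^{-1}\tilde\gamma^{(i)} - x_0\big)/\|\cdots\|$ lies in $\mathcal W_{j,\kappa''}$, and splits the sum at $m=\tnv$. For $m>\tnv$ one sets $m'=m-\tnv$ and writes $x_0^v + mv^{-1}\alpha_jw_j + v^{-1}\tilde\gamma^{(i)} = x_0 + m'v^{-1}\alpha_jw_j + c_v$, where $c_v$ now does point in a direction in $\mathcal W_{j,\kappa''}$; \aref{a:escape}\ref{assit:monotonicity} then gives the pointwise bound $\lambda_{r_i}(x_0^v + mv^{-1}\alpha_jw_j + v^{-1}\tilde\gamma^{(i)})\geq\lambda_{r_i}(x_0 + m'v^{-1}\alpha_jw_j)$, and only \emph{after} this does the Riemann-sum-versus-integral argument you describe (via a second use of monotonicity) apply. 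The terms $m\leq\tnv$ --- of which there are $o(v)$ since $x_0^v\to x_0$ implies $\tnv/v\to0$ --- are handled separately using \aref{a:escape}\ref{assit:Ej} for $m=0$ and the monotonicity in $m$ together with the uniformity over starting points in $\bigcup_v Z_0^v$ built into \aref{a:escape}\ref{assit:escape rate}. Without the $\tnv$-cutoff and the reindexing, the comparison you want is simply not available near $\partial\Acal_j$, and the Lipschitz-replacement step as stated fails.
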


\begin{proof}[Proof of \lref{p:start}]
   When $x_0 \not \in \partial \mathcal A_j$ all the rates are strictly positive by definition, so the result follows by standard large-deviation estimates \cite{ShwartzWeiss95}. Therefore, for the rest of the proof we assume that $x_0 \in \partial \Acal_j$.
 Denoting by $\II\left\{x \in A\right\}$ the indicator function on the set $A$,  we introduce a jump $r^\star$ with
  \begin{equ}
    \Lambda_{r^\star}^{v}(x) := \left(\bar \lambda - \sum_{r \in \R} \Lambda_r^{v}(x)\right)\II\left\{x \in \Bcal_{2 t_\delta}(x_0)\right\} \qquad \text{and} \qquad \gamma^{r^\star} = 0\,,
    \end{equ}
    and expand the set of jumps $\R^\star := \R \cup \{r^\star\}$.
We then define a new family of processes $\bar X^v$ on the extended set of jumps $\R^\star$ and corresponding jump rates.
By independence of the jump processes we trivially couple the underlying Poisson processes for jumps in $\R$ to the ones of the process $X^v$, so that $\bar X^v(t) =  X^v(t)$ a.s. and proceed to establish the desired result for $\bar X^v$. In the rest of the proof, by abuse of notation we will denote by $\bar \Xi_j$ the set defined in \eref{e:xij} for $\bar X^v$ instead of  $X^v $.
We then have
\begin{multline*}
\PP\left[\bar X^v(t_\delta) \in \BB_{\dpp}(x_0+t_\delta w_j), \sigma_{\nvp n_j}> t_\delta, \bar \Xi_j(\nv_+,v) \middle| \bar X^v(0) = x_0^v\right]  \\ \geq
\PP\left[\sigma_{\nv_- n_j} \leq t_\delta < \sigma_{\nv_+ n_j} \middle| \bar \Xi_j(\nv_+,v) , \bar X^v(0) = x_0^v\right]
\times \PP\left[ \bar \Xi_j(\nv_+,v)  \middle| \bar X^v(0) = x_0^v\right]\,.
\end{multline*}

On the event $\bar \Xi_j(\nv_+,v) \cap \left\{\sigma_{\nv_+ n_j} \geq t \right\}$ one has, for $v$ large enough,
\begin{equation*}
\sup_{s < t}\norm{\bar X^v(s) - x_0} \leq \norm{x_0 - x_0^v} + t_\delta \norm{w_j} + v^{-1}\max_i\norm{\tilde \gamma^{(i)}}
< 2 t_\delta\,,
\end{equation*}
and therefore $\sum_{r \in \Rcal^\ast} \Lambda_r^v(\bar X^v(s)) = \bar \lambda$ for all $s < t$.

Now, recalling that $\alpha_j w_j := \sum_{k = 1}^{n_j} \gamma^{r_{k}}$ with $\|w_j\| = 1$, by the conditional independence of the jumps
\begin{equation}
\PP\left[ \bar \Xi_j(\nv_+,v) \middle|\bar X^v(0) = x_0^v\right] =
\prod_{m=0}^{\nv_+ - 1} \prod_{i=1}^{n_j} \frac{\Lambda^{v}_{r_{i}}\left(x_0^v+ m v^{-1}  \alpha_j w_j + v^{-1}\tilde \gamma^{(i)}\right)}{\overline{\lambda}}\,.
\end{equation}
and thus
\begin{multline}
\frac1v \log \PP\left[ \bar \Xi_j(\nv_+,v) \middle| \bar X^v(0) = x_0^v\right]
\geq
\sum_{i = 1}^{n_j} \frac1v \sum_{m=0}^{\nv_+ - 1}
  \log \left(\frac{\Lambda^{v}_{r_{i}}\left(x_0^v+ m v^{-1}  \beta_j w_j + v^{-1}\tilde \gamma^{(i)}\right)}
                       {\lambda_{r_{i}}\left(x_0^v+ m v^{-1}  \alpha_j w_j + v^{-1}\tilde \gamma^{(i)}\right)}\right) \\
 + \sum_{i = 1}^{n_j} \frac1v \sum_{m=0}^{\nv_+ - 1}
   \log \left(\frac{\lambda_{r_{i}}\left(x_0^v+ m v^{-1}  \alpha_j w_j + v^{-1}\tilde \gamma^{(i)}\right)}{\overline \lambda}\right)\,. \label{e:2}
\end{multline}

Now, using \lref{l:intconv} and recalling the definition of $\aleph>0$ from \aref{a:rate-converg} we have for \eref{e:2}
\begin{equ}\label{e:jumptype_ldp}
\liminf_{v \to \infty} \frac1v \log \PP\left[ \bar \Xi_j(\nv,v) \middle| \bar X^v(0) = x_0^v\right]
=
\sum_{i = 1}^{n_j}\int_0^{t_\delta /\alpha_j}\log\left(\lambda_{r_{\ell_i}}\left(x_0 +s \alpha_j w_j\right)\right) \d s
  -n_j t_\delta \log\pc{\overline{\lambda}/ \aleph} / \alpha_j.
\end{equ}
Also since the waiting times between jumps are independent of which type of jump actually occurs
\begin{equ}
\PP\left[\bar X^v(t_\delta ) \in \BB_{\dpp}(x_0+t_\delta w_j), \sigma_{\nvp n_j}> t_\delta\middle| \bar \Xi_j(\nv,v) , \bar X^v(0) = x_0^v\right]
=\PP\left[\nv_- n_j  \leq Y^v < \nv_+ n_j\right],
\end{equ}
where $Y^v$ is Poisson distributed with mean $t_\delta v\overline{\lambda}$.
By definition, $\bar \lambda\geq 1$ so that it is in particular greater than $\nv_+ n_j / v$ for our choice of $t_\delta$, and we have
\begin{equation}\label{e:jumpcount_ldp}
\lim_{v \to \infty} \frac1v \log \PP\left[\nv_- n_j  \leq Y^v < \nv_+ n_j\right]
=
-t_\delta \left(\frac{n_j}{\alpha_j} \log\left(\frac{n_j}{\alpha_j \bar \lambda}\right) - \frac{n_j}{\alpha_j} + \bar\lambda\right)
\end{equation}

The result now follows by combining \eqref{e:jumpcount_ldp} and \eqref{e:jumptype_ldp}.
\end{proof}

\begin{proof}[Proof of \lref{l:intconv}]
We  note that $\lambda_r$ is Lipschitz so whenever $\lambda_r(x_0)>0$ then  $\lambda_r(x+ s w_j)$ is uniformly bounded away from 0 on a sufficiently small time interval and the result follows immediately by a dominated convergence argument.
We therefore assume throughout that $\lambda_r(x_0) = 0$.
 Introducing
\begin{equation}\label{e:tnv}
\tnv := \inf \pg{m \in \mathbb N~:~\frac{x_0^v+ m v^{-1}  \alpha_j w_j + v^{-1}\tilde \gamma^{(i)} - x_0}{ \|x_0^v+ m v^{-1}  \alpha_j w_j + v^{-1}\tilde \gamma^{(i)} - x_0\|}\in \mathcal W_{j,\kappa''} }\,,
\end{equation}
we split the sum in the statement of the Lemma into the terms $m = 0$, $m\in(0,\dots, \tnv)$ and $m \in (\tnv+1, \dots,\nv_+)$ and proceed to bound their contribution separately.
The term $m=0$ is automatically bounded by \aref{a:escape}b). For the second term we observe since  $\log \left({\lambda_{r_{i}}\left(x_0^v+ m v^{-1}  \alpha_j w_j + v^{-1}\tilde \gamma^{(i)} \right)}\right) $ is increasing in $m$ by Assumption~\ref{a:escape}d)  that
\begin{multline*}
\liminf_{v \to \infty} \frac1v \sum_{m=1}^{ \tnv} \log \left({\lambda_{r_{i}}\left(x_0^v+ m v^{-1}  \alpha_j w_j + v^{-1}\tilde \gamma^{(i)} \right)}\right) \\
\geq
\liminf_{v \to \infty} \frac1v \int_0^{\tnv / v}  \log \left({\lambda_{r_{i}}\left(x_0^v + v^{-1}\tilde \gamma^{(i)} + t \alpha_j w_j\right)}\right) \d t
= 0\,,
\end{multline*}
where the final equality arises since $\lim_{v \to \infty} x_0^v = x_0$ implies $\lim_{v \to \infty} v^{-1} \tnv = 0$, and we have the integral estimate from Assumption~\ref{a:escape}c), which is uniform in the starting points $x_0^v + v^{-1}\tilde  \gamma^{(i)} \in Z_0^v$.


 Similarly for the terms $m \in (\tnv+1, \dots,\nv_+)$, defining $m' := m - \tnv \geq 0$ we can write
 \begin{equation}\label{e:increment}
 x_0^v+ m v^{-1}  \alpha_j w_j + v^{-1}\tilde \gamma^{(i)}
 =
 x_0 + m'v^{-1}  \alpha_j w_j +  \pc{x_0^v+ \tnv v^{-1}  \alpha_j w_j + v^{-1}\tilde \gamma^{(i)} - x_0}\,,
 \end{equation}
and note that by \eqref{e:tnv} the vector in brackets, once renormalized, is in $\mathcal W_{j,\kappa''}$.\\
We also have $\lim_{v\to \infty}\norm{x_0^v+ \tnv v^{-1}  \alpha_j w_j + v^{-1}\tilde \gamma^{(i)} - x_0} = 0$ so provided that $\delta$ is small enough that $x_0 +m^\prime v^{-1} \alpha_j w_j \in \Acal_j$
and
$\lambda_{r_i}(x_0 +m^\prime v^{-1} \alpha_j w_j ) < \epsilon^{\prime\prime}$ we may apply Assumption~\ref{a:escape}d) to see that  for each $m'\geq 0$
\begin{equ}
{\lambda_{r_{i}}\left(x_0^v+ m v^{-1}  \alpha_j w_j + v^{-1}\tilde \gamma^{(i)} \right)} \geq
{\lambda_{r_{i}}\left(x_0+ m' v^{-1}  \alpha_j w_j \right)} .
\end{equ}
A second application of Assumption~\ref{a:escape}d) implies
\begin{equ}
\liminf_{v \to \infty} \frac1v \sum_{m'=1}^{\nv_+- \tnv} \log \left({\lambda_{r_{i}}\left(x_0+ m v^{-1}  \alpha_j w_j \right)}\right)
\geq
\liminf_{v \to \infty}  \int_{0}^{(\nv_+-\tnv) / v}  \log \left({\lambda_{r_{i}}\left(x_0 + t \alpha_j w_j\right)}\right) \d t.
\end{equ}
%
\end{proof}

\subsection{Approximation of the rate functional} \label{s:rates}
%
After the process has left the boundary $\partial \Scal$, we estimate the cost of a given path $z$ by approximating $z$ with another path, that is uniformly bounded away from $\partial \Scal$. This implies that a standard \abbr{LDP} holds for such shifted path, and this can then be used to bound the rate function of the original $z$.
We start by proving an adaptation of \cite[Lemma 4.1]{ShwartzWeiss05} to the present setting, recalling that $\omega_\rr$ is the modulus of continuity of $\rr$.
\begin{lemma} \label{l:1}
  Under Assumption \ref{a:escape}, for every $x \in  \Acal_j$ for $j \in \I$ recalling that $t_\delta :=\frac 1 6 \xi \min(\omega_\rr^{-1}(\delta), \delta)$ the cost of the path $\rr_\delta(t) = x + t w_j$ satisfies
\begin{equ}
  \lim_{\delta \to 0} I_{[0,t_\delta]}(\rr_\delta) = 0\,.
\end{equ}
\end{lemma}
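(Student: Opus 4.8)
The plan is to bound the action $I_{[0,t_\delta]}(\rr_\delta)$ from above by testing the infimum in \eqref{e:lbrf} against a single \emph{time-independent} flux $\mu$ built from the escape sequence $\Ecal_j$, and then to read off that every resulting term vanishes as $t_\delta\to0$; the nonnegativity of the action then closes the argument. I would first dispose of the case $j\in\I\setminus\I^{bd}$, where $w_j=0$: then $\rr_\delta\equiv x\in\Acal_j\subseteq\Scal$, the flux $\mu\equiv0$ is admissible since $\sum_r\mu_r\gamma^r=0=\dot\rr_\delta$, and $I_{[0,t_\delta]}(\rr_\delta)\le\int_0^{t_\delta}\Hcal(0\,|\,\lambda(x))\,\d t=t_\delta\sum_{r\in\Rcal}\lambda_r(x)\to0$, since the rates are bounded on the compact set $\Scal$.

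For $j\in\I^{bd}$ I would use the escape sequence $\Ecal_j=(r_1,\dots,r_{n_j})$ of \aref{a:escape}b), which satisfies $\sum_{i=1}^{n_j}\gamma^{r_i}=\alpha_j w_j$, and set
\[
  \mu_r\ :=\ \tfrac1{\alpha_j}\,\#\{\,i\in\{1,\dots,n_j\}\colon r_i=r\,\},\qquad r\in\Rcal .
\]
Then $\mu_r\ge0$ with $\mu_r>0$ only for $r\in\Ecal_j$, and $\sum_{r\in\Rcal}\mu_r\gamma^r=\alpha_j^{-1}\sum_{i=1}^{n_j}\gamma^{r_i}=w_j=\dot\rr_\delta(t)$ for a.e.\ $t$, so $\mu$ is admissible in the infimum and consequently $I_{[0,t_\delta]}(\rr_\delta)\le\int_0^{t_\delta}\Hcal(\mu\,|\,\lambda(x+tw_j))\,\d t$. (For this to be the finite branch of \eqref{e:lbrf} one needs $\rr_\delta(t)\in\Scal$ for $t\in[0,t_\delta]$; for $\delta$ small this is guaranteed by the choice of the covering $\{\Acal_i\}$ and the escape directions $w_j$, exactly as in the construction of $\rrr_\delta$ in \sref{s:path} and \lref{l:breakup}.) Expanding $\Hcal$ via \eqref{e:lbrf2} and using the convention $0\log0=0$ for the jumps outside $\Ecal_j$, the integrand equals
\[
  \sum_{r\in\Rcal}\lambda_r(x+tw_j)\ +\ \sum_{r\in\Ecal_j}\bigl(\mu_r\log\mu_r-\mu_r\bigr)\ -\ \sum_{r\in\Ecal_j}\mu_r\log\lambda_r(x+tw_j),
\]
which is finite for a.e.\ $t\in(0,t_\delta)$ because $\lambda_r(x+tw_j)>0$ there for $r\in\Ecal_j$ (implicit in \aref{a:escape}c)).

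I would then integrate term by term. The first sum is $O(t_\delta)$, since $\sum_r\lambda_r$ is bounded; the second sum is a fixed finite constant times $t_\delta$; and the third---the only delicate term---is bounded in absolute value by $\sum_{r\in\Ecal_j}\mu_r\int_0^{t_\delta}|\log\lambda_r(x+tw_j)|\,\d t$, which tends to $0$ as $\delta\to0$, uniformly over $x\in\Acal_j$, by \aref{a:escape}c) applied with $\rho=t_\delta=\tfrac16\xi\min(\omega_\rr^{-1}(\delta),\delta)\to0$. Hence $\limsup_{\delta\to0}I_{[0,t_\delta]}(\rr_\delta)\le0$, and combining this with $\Hcal\ge0$ (each summand in \eqref{e:lbrf2} is nonnegative) gives $\lim_{\delta\to0}I_{[0,t_\delta]}(\rr_\delta)=0$. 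I do not expect a genuine analytic obstacle: the whole content is that \aref{a:escape}b)--c) simultaneously provide an admissible flux pointing in direction $w_j$ and the precise $\log$-integrability of its rates along that direction near $\partial\Scal$; the only bookkeeping point is the membership $\rr_\delta(t)\in\Scal$ flagged above.
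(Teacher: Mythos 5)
Your proof is correct and takes the same approach as the paper's (which is a one-line appeal to Assumption~\ref{a:escape} and the resulting integrability of the log-rates along $\rr_\delta$): you simply spell out the choice of the time-independent admissible flux $\mu$ built from the escape sequence $\Ecal_j$, the resulting three-term decomposition of $\Hcal(\mu\,|\,\lambda(\rr_\delta))$, the application of \aref{a:escape}c) to kill the $\int|\log\lambda_r|$ term, and the nonnegativity of $\Hcal$ to upgrade $\limsup\le0$ to a genuine limit. The details you add — including the admissibility check $\sum_r\mu_r\gamma^r=w_j$, the treatment of $j\in\I\setminus\I^{bd}$, and the remark that $\rr_\delta(t)\in\Scal$ must be checked (via the construction/Lemma~\ref{l:breakup}) — are exactly the bookkeeping the paper glosses over.
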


{
\begin{proof}
  The thesis follows immediately by \aref{a:escape}, resulting in the integrability of the rate functional $I_{\lbrack0,t_\delta\rbrack}$ along the chosen trajectories.
\end{proof}
We define throughout
\begin{equation}\label{e:ell}
\ell(x,y):=\sup_{\theta\in \RR^{d}}{\theta \cdot y - \sum_{r \in \R} \lambda^r(x) \pc{\exp\pc{\theta \cdot \gamma_r}-1}}.
\end{equation}
and recall that, by convex duality, for any $x,y \in \Rr^{d}$ we have ${\ell(x,y) = \inf_{\{\mu \in \RR_{\geq0}^{|\mathcal R|}~:~\sum_{r\in \mathcal R} \mu_r \gamma^r = y\}} \Hcal(\mu|\lambda(x))}$ for  $ \Hcal(\mu|\lambda)$ defined in \eref{e:lbrf2}, as proven \eg in \cite{ShwartzWeiss95}. Consequently we can express $I_{[0,T]}(z) = \int_0^T \ell(z(s), z'(s))\,\d s$. This allows to prove the following adaptation of \cite[Lemma 5.1]{ShwartzWeiss05}.
\begin{lemma}\label{l:42}
  Let Assumptions~\ref{a:rate-converg} and \ref{a:escape} hold. Fix $i \in \mathcal I$, $\tau > 0$ and let the path $\rr$ take values in $\Acal_i$ for $t \in [0,\tau]$ and satisfy $I_{[0,\tau]}(\rr)<K$ for $K<\infty$. Then for any $w \in \mathcal W_{i,\kappa''}$, $C_\beta > 0$ the shifted path $\rr_\delta(\cdot) = \rr(\cdot)+ C_\beta t_\delta w$ satisfies
\begin{equ}
  \limsup_{\delta \to 0} I_{[0,\tau]}(\rr_\delta) \leq I_{[0,\tau]}(\rr)
\end{equ}
\end{lemma}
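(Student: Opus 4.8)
The plan is to compare the integrands $\ell(\rr_\delta(s), \dot\rr_\delta(s))$ and $\ell(\rr(s), \dot\rr(s))$ pointwise in $s$ and pass to the limit $\delta \to 0$, using dominated convergence. First observe that $\dot\rr_\delta(s) = \dot\rr(s)$ a.e., since the shift $C_\beta t_\delta w$ is constant in time; hence only the spatial argument of $\ell$ changes. Since $\rr_\delta(s) \to \rr(s)$ uniformly as $\delta \to 0$ (because $t_\delta = \frac16\xi\min(\omega_\rr^{-1}(\delta),\delta) \to 0$), and since $\rr(s) \in \Acal_i$ while $\rr_\delta(s) = \rr(s) + C_\beta t_\delta w$ lies in a neighbourhood of $\Acal_i$ for $\delta$ small, the key point is upper semicontinuity of $y \mapsto \limsup_{x' \to x}\ell(x',y)$ in a suitable direction, or rather a one-sided estimate coming from \aref{a:escape}d).

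The main step is the pointwise bound $\limsup_{\delta \to 0}\ell(\rr_\delta(s),\dot\rr(s)) \leq \ell(\rr(s),\dot\rr(s))$ for a.e.\ $s$. To get this I would use the variational formula \eqref{e:lbrf2}: fix $\mu$ with $\sum_r \mu_r\gamma^r = \dot\rr(s)$ achieving (nearly) the infimum defining $\ell(\rr(s),\dot\rr(s))$. For indices $r$ with $\lambda_r(\rr(s)) \geq \epsilon''$, continuity of $\lambda_r$ and of $\Hcal(\mu|\cdot)$ on $\{\lambda_r \geq \epsilon''\}$ gives $\Hcal(\mu|\lambda(\rr_\delta(s))) \to \Hcal(\mu|\lambda(\rr(s)))$ in those coordinates. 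For indices $r$ with $\lambda_r(\rr(s)) < \epsilon''$, \aref{a:escape}d) ensures $\lambda_r(\rr(s)+ t w)$ is nondecreasing in $t$ along directions $w \in \mathcal W_{i,\kappa''}$, so $\lambda_r(\rr_\delta(s)) \geq \lambda_r(\rr(s))$; the map $\lambda \mapsto \lambda - \mu_r + \mu_r\log(\mu_r/\lambda)$ is \emph{decreasing} for $\lambda < \mu_r$ and we must handle the case $\lambda_r(\rr(s)) > \mu_r$ separately, but in fact when $\lambda_r(\rr(s))$ is small the relevant contribution to $\Hcal$ is controlled: increasing $\lambda_r$ towards $\mu_r$ only decreases the summand, and increasing past $\mu_r$ increases it by at most a continuous amount that vanishes as $\delta \to 0$. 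Combining, $\limsup_\delta \Hcal(\mu|\lambda(\rr_\delta(s))) \leq \Hcal(\mu|\lambda(\rr(s)))$, and since $\mu$ is admissible for $\ell(\rr_\delta(s),\dot\rr(s))$ too, this yields the pointwise bound.

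The remaining step is to justify dominated convergence: I need an integrable dominating function for $\ell(\rr_\delta(s),\dot\rr(s))$ uniformly in small $\delta$. Since $\rr$ takes values in the compact set $\Acal_i$ with $I_{[0,\tau]}(\rr) < K$, and the rates $\lambda_r$ are bounded (compactness of $\S$) and bounded below along the shifted path away from $\partial\Scal$ once $\delta$ is positive, one gets a bound of the form $\ell(\rr_\delta(s),\dot\rr(s)) \leq C(1 + |\dot\rr(s)|\log_+|\dot\rr(s)|)$ with $C$ independent of small $\delta$; the superlinear term is integrable precisely because $I_{[0,\tau]}(\rr) < \infty$ (a standard fact relating finiteness of the action to integrability of $|\dot\rr|\log|\dot\rr|$, cf.\ \cite{ShwartzWeiss95}). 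Then Fatou/dominated convergence gives $\limsup_{\delta\to0} I_{[0,\tau]}(\rr_\delta) = \limsup_{\delta\to0}\int_0^\tau \ell(\rr_\delta(s),\dot\rr(s))\,\d s \leq \int_0^\tau \limsup_{\delta\to0}\ell(\rr_\delta(s),\dot\rr(s))\,\d s \leq \int_0^\tau \ell(\rr(s),\dot\rr(s))\,\d s = I_{[0,\tau]}(\rr)$.

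\textbf{Main obstacle.} The delicate point is the pointwise $\limsup$ bound on $\Hcal$ when some $\lambda_r(\rr(s))$ is strictly between $0$ and $\mu_r$ or exceeds $\mu_r$: monotonicity from \aref{a:escape}d) only pushes $\lambda_r$ upward, which is favourable when $\lambda_r < \mu_r$ but could a priori increase the summand when $\mu_r < \lambda_r(\rr_\delta(s))$. One must check that in that regime $\lambda_r(\rr(s))$ itself is already positive (so the summand is continuous there and the small shift produces only a vanishing perturbation), i.e.\ the genuinely dangerous indices are exactly those with $\lambda_r(\rr(s)) = 0$, for which the monotonicity argument applies cleanly. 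Making this case distinction rigorous, and verifying the dominating function is uniform in $\delta$ near the boundary, is the technical heart of the argument.
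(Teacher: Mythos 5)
Your strategy is the same as the paper's: fix the optimizer $\mu^*(t)$ for the original path, write $\ell(\rr_\delta(t),\dot\rr_\delta(t))\le\Hcal(\mu^*(t)\mid\lambda(\rr_\delta(t)))$, and split the reaction index set at the threshold $\epsilon''$ to invoke monotonicity (Assumption~\ref{a:escape}d)) on the small-rate indices and continuity on the rest. However, the two ``technical hearts'' you flag are resolved more cleanly than your sketch suggests, and your proposed resolution of the second one has a gap.

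First, the ``case distinction'' about whether $\lambda_r(\rr(s))$ lies below or above $\mu_r$ is a false alarm. The paper does not compare $\lambda_r$ with $\mu_r$; instead it writes the increment of each summand additively as
\begin{equation*}
\Hcal(\mu^*\mid\lambda(\rr_\delta))_r-\Hcal(\mu^*\mid\lambda(\rr))_r
=\bigl(\lambda_r(\rr_\delta)-\lambda_r(\rr)\bigr)+\mu_r^*\log\frac{\lambda_r(\rr)}{\lambda_r(\rr_\delta)}.
\end{equation*}
The first term is $\le K_\lambda(\delta)$ by Lipschitz continuity and uniform convergence, regardless of sign; the second term is $\le 0$ when $\lambda_r(\rr)\le\epsilon''$ (monotonicity gives $\lambda_r(\rr_\delta)\ge\lambda_r(\rr)$), and is $\le 2K_\lambda(\delta)\mu_r^*/\epsilon''$ when $\lambda_r(\rr)>\epsilon''$. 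Whether $\lambda_r$ sits above or below $\mu_r$ never enters. Second, your proposed dominating function $C(1+|\dot\rr|\log_+|\dot\rr|)$ with $C$ coming from ``rates bounded below along the shifted path'' is not uniform in $\delta$: the shift $C_\beta t_\delta w$ shrinks to zero, so there is no $\delta$-independent lower bound on $\lambda_r(\rr_\delta(s))$ near $\partial\Scal$. The paper sidesteps dominated convergence entirely. Combining the two bounds above with the estimate $\mu_r^*(t)\le C_0(1+\ell_1(t))$ --- which follows from the superlinear growth of $\ell$ in its second argument, cf.~\cite[Lem.~5.1]{ShwartzWeiss05}, \cite[Lem.~5.17]{ShwartzWeiss95}, and uses only boundedness of the rates --- one obtains the quantitative pointwise bound
$\ell_2(t)\le\ell_1(t)+K_\lambda(\delta)\bigl(C_1+C_2\,\ell_1(t)\bigr)$.
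Integrating over $[0,\tau]$ and using $I_{[0,\tau]}(\rr)\le K$ and $K_\lambda(\delta)\to 0$ gives the claim directly. If you prefer a dominated-convergence phrasing, the correct uniform dominating function is $\ell_1(1+C_2K_\lambda(\delta_0))+C_1K_\lambda(\delta_0)$ for all $\delta<\delta_0$, which is integrable precisely because $I_{[0,\tau]}(\rr)<K$ --- not the $|\dot\rr|\log_+|\dot\rr|$ bound you propose.
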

\begin{proof}
  We define $\ell_1(t) = \ell(\rr(t),\rr'(t))$ and $\ell_2(t) = \ell(\rr_{\delta}(t),\rr_{\delta}'(t))$ and denote by $(\mu_r^*(t))_{r \in \R}$  the optimizing set of jumps in \eref{e:lbrf} for the path $\rr$.
This minimizer exists, because the sublevel sets for $\mu \mapsto \Hcal(\mu | \lambda)$ for $\mathcal H$ from \eref{e:lbrf2} are compact.
  Then  we have that $\ell_2(t)\leq \mathcal H(\mu^*(t)|\lambda(\rr_\delta(t)))$.
  On the other hand, by continuity of the asymptotic rates $\lambda_r$ there exists a function $K_\lambda(\delta)$ with $\lim_{\delta \to 0} K_\lambda(\delta) = 0$ for which we have $|\lambda_r(\rr(t)) - \lambda_r(\rr_\delta(t))| < K_\lambda(\delta)$ for all $r \in \R$, so that
  \begin{equ}
    \ell_2(t) - \ell_1(t) \leq \sum_{r \in \R}   \lambda_r(\rr_\delta(t)) - \lambda_r(\rr(t)) + \mu_r^*(t) \log \frac{\lambda_r(\rr(t))}{ \lambda_r(\rr_\delta(t))} \leq |\R|K_\lambda(\delta) + \sum_{r\in \mathcal R}\mu_r^*(t) \log \frac{\lambda_r(\rr(t))}{ \lambda_r(\rr_\delta(t))}\,.
  \end{equ}
  We now bound the second term on the \abbr{RHS} from above depending on whether $\lambda_r(\rr(t)) > \epsilon''$ from \aref{a:escape}. If $\lambda_r(\rr(t)) \leq \epsilon''$, by the assumed increasing property of $\lambda_r(\cdot)$ along $x+s w $, we have $\log {\lambda_r(\rr(t))}/{ \lambda_r(\rr_\delta(t))} \leq 0$. On the other hand, if $\lambda_r(\rr(t)) > \epsilon''$ then for $\delta$ small enough
  \begin{equ}
    \log \frac{\lambda_r(\rr(t))}{\lambda_r(\rr_\delta(t))} \leq \log \frac {\lambda_r(\rr(t))}{\lambda_r(\rr(t)) - K_\lambda(\delta)}\leq \log \frac {\epsilon''}{ \epsilon'' - K_\lambda(\delta)} \leq \frac {2 K_\lambda(\delta)}{\epsilon''}\,,
  \end{equ}
  where in the last inequality we used $\log(1-x)^{-1} < 2x$ for $x$ small enough. It remains to show that the contribution of the term $\mu_r^*(t)$ is bounded from above on the paths of interest. This result is obtained in the proof of \cite[Lemma 5.1]{ShwartzWeiss05} by the convexity and asymptotic growth of the Lagrangian $\ell(x, y)$ in its second argument, proven in \cite[Lemma 5.1]{ShwartzWeiss05}, \cite[Lemma 5.17]{ShwartzWeiss95} leveraging only the boundedness of the rates $\lambda_r$. Following the same argument we bound $\mu_r^*(t) \leq C_0(1+\ell_1(t))$ and we finally obtain
  \begin{equ}
      \ell_2(t) \leq  \ell_1(t) + K_\lambda(\delta)(C_1 + C_2 \ell_1(t))
  \end{equ}
  for sufficiently large, positive constants $C_0, C_1, C_2$. By the assumed boundedness of $I_{[0,\tau]}(\rr)$, this gives the desired result by integration.
\end{proof}

We now combine the above estimates, established in each region $\mathcal A_i$ separately, to obtain convergence of the rate functional $I_{[0,T]}(\rrr_\delta)$ to $I_{[0,T]}(z)$ as $\delta \to 0$. While the idea of the proof is the same as in the original reference, we have to reproduce the process more closely, as in our case we cannot bound, in general, the rate function of the shifts linearly in $\delta$ as done in \cite[Lemma 4.1]{ShwartzWeiss05} and we only have the limiting result \lref{l:1}. To bypass this issue, we leverage exponential tightness -- discussed directly below the statement of \tref{th:main} -- to show that the number of transitions between different $\Acal_j$ done by the path of interest is bounded uniformly on sublevel sets of the rate functional. For any $a \in \mathbb R^d$, we extend the definition of $\rrr_\delta$ on the interval $[0,T]$ as:
\begin{equ}
  \tilde \rrr_\delta^a(t) = \begin{cases}\rrr_\delta(t_\delta) +a \qquad & \text{ for } t\in[0,t_\delta)\\
\rrr_\delta(t)+a & \text{ for } t\in[t_\delta,T]\end{cases}\,.
\end{equ}

\begin{lemma}\label{l:43}
  Let Assumptions~\ref{a:rate-converg} and \ref{a:escape} hold, and let the path $\rr$ satisfy $I_{[0,T]}(\rr)<\infty$. Then  the path $\tilde \rrr_\delta$  satisfies
\begin{equ}
  \limsup_{\delta \to 0}\,\,\sup_{a \in \mathrm{span}_{r\in \R}(\gamma^r) ~:~\|a\|<\kappa''t_\delta} I_{[0,T]}(\tilde \rrr_\delta^a) \leq I_{[0,T]}(\rr)
\end{equ}
\end{lemma}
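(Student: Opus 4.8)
The plan is to decompose the action $I_{[0,T]}(\tilde\rrr_\delta^a)$ along the finite partition of $[0,T]$ furnished by \lref{l:breakup}, and then to estimate separately the two kinds of pieces of which $\tilde\rrr_\delta^a$ is built: the short linear ``escape'' segments, by (the proof of) \lref{l:1}, and the ``tracking'' segments --- which are rigid space-translates of pieces of $\rr$ --- by \lref{l:42}. The reason for insisting on a partition of controlled cardinality is that, in contrast to \cite{ShwartzWeiss05}, we only know that an escape segment has \emph{vanishing} cost (\lref{l:1}), not a cost that is linear in $\delta$; hence we can afford only finitely many of them, with a bound that is uniform in $\delta$ and in the perturbation $a$.

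First I would apply \lref{l:breakup} with $K:=I_{[0,T]}(\rr)<\infty$. It produces a number $J=J(K)$ --- independent of $\delta$ and $a$, and bounded because exponential tightness limits the number of regions $\Acal_i$ that a path of cost $\le K$ can visit --- together with times $0=\tau_0<\dots<\tau_J=T$ and indices $\{i_k\}$ such that $\rr$ takes values in $\Acal_{i_k}$ on the $k$-th subinterval. By the construction \eqref{e:rtilde}--\eqref{e:rtilde2}, up to the time-reparametrisation induced by the cumulative shifts $\Delta_k^\beta$ (all of which vanish as $\delta\to0$, since $\Delta_J^\beta=t_\delta\sum_{\ell<J}\beta^\ell\to0$), the restriction of $\tilde\rrr_\delta^a$ to $[t_\delta,T]$ is a concatenation of $J$ linear escape segments --- the $k$-th of time-length $\beta^k t_\delta\to0$ --- alternating with $J$ tracking segments, the $k$-th being a time-translate of $\rr$ on its $k$-th subinterval shifted in space by a constant vector; on the leftover interval $[0,t_\delta)$ the path $\tilde\rrr_\delta^a$ is constant. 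Since $\ell$ in \eqref{e:ell} has no explicit time dependence, $I_{[0,T]}$ is invariant under time-translations, so $I_{[0,T]}(\tilde\rrr_\delta^a)$ equals the sum of the actions of these finitely many pieces; moreover, because $\Delta_J^\beta\to0$, for small $\delta$ the last tracking segment can only carry a strict subinterval of $\rr$'s last subinterval, which only lowers the action, so it suffices to bound the sum over full pieces. One also checks, using \lref{l:breakup} and that $a\in\mathrm{span}_{r\in\R}(\gamma^r)$ with $\|a\|<\kappa''t_\delta<\kappa_-t_\delta$, that $\tilde\rrr_\delta^a$ remains in $\Scal$ for $\delta$ small, so all these actions are finite.

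For the constant piece on $[0,t_\delta)$ I would use $\ell(x,0)\le\sum_{r\in\R}\lambda_r(x)$, which is uniformly bounded since the rates are bounded, so this term is $O(t_\delta)\to0$ uniformly in $a$. For the $k$-th escape segment, combining \lref{l:breakup} at consecutive indices with $\|a\|<\kappa''t_\delta$ and the convexity of the ball $\mathcal W_{i_k,\kappa''}$ shows that it is a straight ray of length $O(\beta^k t_\delta)\to0$ issuing from $\rr(\tau_k)\in\Acal_{i_k}$ in a direction drawn from $\mathcal W_{i_k,\kappa''}$; the argument of \lref{l:1} --- test the variational problem \eqref{e:lbrf} with the jump multiplicities supplied by the escape sequence $\Ecal_{i_k}$ of \aref{a:escape}b), and use the uniform integrability of $|\log\lambda_r|$ along such rays provided by \aref{a:escape}c)--d) --- then shows its action tends to $0$ as $\delta\to0$, uniformly in $a$. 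As there are exactly $J$ such segments and $J$ is fixed, these contributions sum to $o(1)$ uniformly in $a$.

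Finally, for the $k$-th tracking segment the last assertion of \lref{l:breakup} identifies its constant space-shift as $\beta^k t_\delta w^{(k)}$ for some $w^{(k)}\in\mathcal W_{i_k,\kappa''}$ (depending on $a$), so after the time-translation the segment is exactly $\rr(\cdot)+\beta^k t_\delta w^{(k)}$ on the $k$-th subinterval, along which $\rr$ lies in $\Acal_{i_k}$ and has cost $\le K<\infty$. Then \lref{l:42}, applied with the bounded constant $C_\beta=\beta^k\le\beta^{J-1}$ and $w=w^{(k)}$, bounds $\limsup_{\delta\to0}$ of this piece's action by the action of $\rr$ on the $k$-th subinterval; the bound is uniform in $a$ because the estimate in the proof of \lref{l:42} depends on the shift only through the bounded set $\mathcal W_{i_k,\kappa''}$ and the bounded constant $C_\beta$. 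Adding the three kinds of contributions, taking $\limsup_{\delta\to0}\sup_{\|a\|<\kappa''t_\delta}$, and using that the $\limsup$ of a finite sum is at most the sum of the $\limsup$s yields
\begin{equ}
  \limsup_{\delta\to0}\ \sup_{a}\ I_{[0,T]}(\tilde\rrr_\delta^a)\ \le\ \sum_{k=1}^{J} I_{[\tau_{k-1},\tau_k]}(\rr)\ =\ I_{[0,T]}(\rr),
\end{equ}
as claimed. I expect the main obstacle to be exactly the interplay flagged at the outset: because an escape segment is only known to have vanishing (not $O(\delta)$) cost, the argument crucially relies on \lref{l:breakup}'s bound on the number $J$ of such segments along a path of cost $\le K$, and it needs all estimates --- escape, tracking, and $\Scal$-containment --- to hold uniformly over the perturbations $a$; once those are secured, the remainder is bookkeeping with translation-invariant actions.
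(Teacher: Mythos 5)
Your proposal follows essentially the same approach as the paper's proof: decompose $[0,T]$ via Lemma~\ref{l:breakup} into the initial constant piece on $[0,t_\delta)$, $J$ tracking segments handled by Lemma~\ref{l:42} (with time-translation invariance), and $J$ escape segments handled via the integrability estimates underlying Lemma~\ref{l:1} and Assumption~\ref{a:escape}, then sum and use that $J$ is fixed. The additional emphasis you place on uniformity in $a$ (through the bounded sets $\mathcal W_{i_k,\kappa''}$ and $C_\beta\le\beta^{J-1}$) is implicit in the paper but is a welcome clarification rather than a deviation.
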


\begin{proof}
Recall the definition of times $\tau_1\, \dots, \tau_J$ from \lref{l:breakup}, separating $[0,T]$ in a finite number of intervals where the path $\rr$ is contained in a set $\Acal_j$.
We now express the rate function as the sum of the cost of the shifted path and the cost of the shifts: For $\Delta_i := t_\delta\sum_{k=0}^i (3/\kappa'')^k$ (reflecting the choice of $\beta$ in \lref{l:breakup}) we write
     \begin{equ}\label{e:split}
       I_{[0,T]}(\tilde \rrr_\delta^a) = I_{[0,t_\delta)}(\tilde \rrr_\delta^a) + \sum_{i=1}^J I_{[\tau_{i-1} + \Delta_i,\tau_{i}+ \Delta_i]}(\tilde \rrr_\delta^a)  + \sum_{i=1}^J I_{[\tau_{i} + \Delta_i,\tau_{i}+ \Delta_{i+1}]}(\tilde \rrr_\delta^a)\,,
       \end{equ}
       and proceed to bound the terms on the \abbr{rhs} separately. For the first term we can trivially choose the optimizing set of fluxes in \eref{e:lbrf2} as $\mu^* = 0$\rp{}, so that by boundedness of the rates $\lambda_r(x)<C$ we have $I_{[0,t_\delta)}(\tilde \rrr_\delta^a) \leq |\R| C t_\delta$, which vanishes with $\delta \to 0$.

       We proceed to bound the summands in the second term. Recalling by \lref{l:breakup} that for each time interval $[\tau_{i-1} + \Delta_i,\tau_{i}+ \Delta_i]$ the trajectory of  $\tilde \rrr_\delta^a$ corresponds to the one of $z + wt_\delta$ for $w \in \mathcal W_{j_i, \kappa''}$ we see that for each such interval we can apply \lref{l:42}. Combining this result with the time-translation invariance of the rate functional we obtain that
    \begin{equ}\label{e:shifted}
       \limsup_{\delta \to 0}\sum_{i=1}^J I_{[\tau_{i-1} + \Delta_i,\tau_{i}+ \Delta_i]}(\tilde\rrr_\delta^a) \leq \sum_{i=1}^J I_{[\tau_{i-1} ,\tau_{i}]}(\rr)\,.
     \end{equ}

     We then bound the third term of \eref{e:split} by \lref{l:1}, recalling that by \lref{l:breakup} the path $\tilde \rrr_\delta^a$ is in $\mathcal S$.
     We start by writing
     \begin{equ}
       I_{[\tau_{i} + \Delta_i,\tau_{i}+ \Delta_{i+1}]}(\tilde\rrr_\delta^a) \leq  \sum_{r \in \mathcal R} \int_{\tau_{i} + \Delta_i}^{\tau_{i}+ \Delta_{i+1}}\!\Big\lbrack  \lambda_r(\tilde\rrr_\delta^a(s)) - \mu_r^* + \mu_r^* \log \frac {\mu_r^*}{\lambda_r(\tilde\rrr_\delta^a(s))}\Big\rbrack\,\d s\,.
     \end{equ}
     where $\mu_r^*$ is given by the multiplicity of reaction $r$ in $\mathcal E_j$. We then further divide the sum on $\R$ based on whether the jump rates $\lambda_r(z(t))$ are bounded from below by $\epsilon''$ from \aref{a:escape} on the time interval of interest.
      We denote the jumps whose rates do not satisfy this lower bound by $\R^{(0)}(j_i)$ and write
     \begin{equs}
       I_{[\tau_{i} + \Delta_i,\tau_{i}+ \Delta_{i+1}]}(\tilde\rrr_\delta^a) &\leq  \sum_{r \in \R^{(0)}(j_i)} \int_{\tau_{i} + \Delta_i}^{\tau_{i}+ \Delta_{i+1}}\!\Big\lbrack \lambda_r(\tilde\rrr_\delta^a(s)) - \mu_r^* + \mu_r^* \log \frac {\mu_r^*}{\lambda_r(\tilde\rrr_\delta^a(s))}\Big\rbrack\, \d s \\&\qquad \qquad \qquad + \sum_{r \in \R\setminus \R^{(0)}(j_i)} \int_{\tau_{i} + \Delta_i}^{\tau_{i}+ \Delta_{i+1}} \!\Big\lbrack \lambda_r(\tilde\rrr_\delta^a(s)) - \mu_r^* + \mu_r^* \log \frac {\mu_r^*}{\lambda_r(\tilde\rrr_\delta^a(s))}\Big\rbrack\,\d s\,.\label{e:lasteq}
     \end{equs}
     Then, by compactness given by $I(\rr)<K$ there exists $C'(K)>0$ such that the second term is bounded from above by
     \begin{equ}
       \sum_{r  \in \R\setminus \R^{(0)}(j_i)} \int_{\tau_{i} + \Delta_i}^{\tau_{i}+ \Delta_{i+1}} \!\Big\lbrack \lambda_r(\tilde\rrr_\delta^a(s)) - \mu_r^* + \mu_r^* \log \frac {\mu_r^*}{\lambda_r(\tilde\rrr_\delta^a(s))} \Big\rbrack\,\d s < |\R| C' (\Delta_{i+1}- \Delta_i)\,.
     \end{equ}
     On the other hand, for the first term in \eref{e:lasteq} we have
     \begin{equ}
       \sum_{r \in \R^{(0)}(j_i)} \int_{\tau_{i} + \Delta_i}^{\tau_{i}+ \Delta_{i+1}} \!\Big\lbrack \lambda_r(\tilde\rrr_\delta^a(s)) - \mu_r^* + \mu_r^* \log \frac {\mu_r^*}{\lambda_r(\tilde\rrr_\delta^a(s))}\Big\rbrack\,\d s< |\R| C' (\Delta_{i+1}- \Delta_i) + |\R| C''\int_{\tau_{i} + \Delta_i}^{\tau_{i}+ \Delta_{i+1}} \log \lambda_r(\tilde\rrr_\delta^a(s))\,\d s\,,
     \end{equ}
     and using \aref{a:escape} b) and c) we have for some $x \in \Acal_{j_i}$
     \begin{equ}
     \lim_{\delta \to 0} \int_{\tau_{i} + \Delta_i}^{\tau_{i}+ \Delta_{i+1}} \log \lambda_r(\tilde\rrr_\delta^a(s))\,\d s \leq  \lim_{\delta \to 0} \int_0^{\beta^it_\delta} \log \lambda_r(x + s w_{j_i}) \, \d s= 0\,.
   \end{equ}
   Combining the above upper bounds for each transition we have
   \begin{equ}\label{e:shifts}
     \lim_{\delta \to 0} \sum_{i=0}^J I_{[\tau_{i} + \Delta_i,\tau_{i}+ \Delta_{i+1}]}(\tilde\rrr_\delta^a) \leq J \lim_{\delta \to 0} \sup_{i \in (0,\dots, J)} I_{[\tau_{i} + \Delta_i,\tau_{i}+ \Delta_{i+1}]}(\tilde\rrr_\delta^a) = 0\,.
   \end{equ}
   Finally, combining \eref{e:split} with \eref{e:shifted} and \eref{e:shifts} we obtain the desired result.\end{proof}

\subsection{Proof of LDP in path space} \label{s:patching}


\begin{proof}[Proof of \pref{l:28}] We conclude the proof by bounding the terms in  \eref{e:product}. For the first one we have that \eref{e:product2} holds by combining \eref{e:subset1} and \lref{p:start}, for which we have that
\begin{align}
  \lim_{\delta \to 0} \liminf_{v \to \infty} \frac1v \log\PP\left[X^v(t) \in \BB_{\dpp}(t_\delta w_j)\,,\, \Xi_j(\nvp,v)\,,\,\sigma_{\nvp,n_j}> t_\delta \middle| X^v(0) = x_0^v \right] = 0 \,,\label{e:ubonrf}
\end{align}
It remains to show that the second term is bounded by the rate function as in \eref{e:product2}. We first bound this term as
\begin{multline*}
  \inf_{y \in \BB_{\dpp}( x_0+t_\delta w)} \p{X^v \in \BBp_{[t_\delta, T]}(\delta^\prime, \rrr_\delta)~|~X^v(t_\delta)=y } \geq  \\
  \inf_{a \in \BB_{\dpp}( 0)} \p{X^v \in \BBp_{[t_\delta, T]}(\delta^\prime/2, \rrr_\delta+a)~|~X^v(t_\delta)=\rrr_\delta(t_\delta) + a },
  \end{multline*}
  where we shift the path $\rrr_\delta$ of $a$, but the lower bound is preserved since $\BBp_{[t_\delta, T]}(\delta^\prime/2, \rrr_\delta+a)\subseteq \BBp_{[t_\delta, T]}(\delta^\prime, \rrr_\delta)$ for all $a \in \BB_{\dpp}( 0)$.
  Since paths in the \abbr{rhs} above are uniformly bounded away from $\partial \Scal$ by \lref{l:breakup}, rates are uniformly bounded away from $0$ on the paths of interest and standard large-deviation bounds (which hold uniformly on $y \in \BB_{\dpp}(t_\delta \, w_{i_0} )$) can be applied. Therefore, defining $\mathcal N_\delta : = \BB_{\dpp}(0) \cap \mathrm{span}_{r \in \R}(\gamma^r)$ we bound the second term of \eref{e:product} by
\begin{align}
  \liminf_{v \to \infty}\frac1v \log \inf_{a \in \mathcal N_\delta}\p{X^v \in \BBp_{[t_\delta, T]}(\delta'/2, \rrr_\delta+a) \,|\,X^v(t_\delta) = \rrr_\delta(t_\delta) + a} &\geq
  \inf_{a \in \mathcal N_\delta} (-  \inf_{z \in \BBp_{[t_\delta, T]}(\delta'/2, \rrr_\delta+a)} I_{[t_\delta, T]}(z))\notag \\
  &\geq
  \inf_{a \in \mathcal N_\delta} (-  I_{[t_\delta, T]}(\rrr_\delta+a))\notag\\&
\geq  \inf_{a \in \mathcal N_\delta} (-  I_{[0, T]}(\tilde \rrr_\delta^a)) \,.
\end{align}
Finally, combining \lref{l:43} with the bound obtained above we obtain that
\begin{equ}
  \liminf_{\delta\to 0} \inf_{a \in \mathcal N_\delta} (-  I_{[0, T]}(\tilde \rrr_\delta^a))\geq - I_{[0, T]}(\rr)=- I^{x_0}_{[0, T]}(\rr)\,.\label{e:lbdone}
\end{equ}
\end{proof}

}

We conclude this section by proving \lref{l:breakup}
\begin{proof}[Proof of \lref{l:breakup}]
  The finiteness of $J$ results from \cite[Lemma 3.5]{ShwartzWeiss05}.  {In particular, one can choose $\alpha$ small enough and $[\tau_{i-1},\tau_i]$ so that the set $\{\BB_\alpha(\rr(t))~,~t \in [\tau_{i-1},\tau_i]\}$ is contained in $\Acal_{j_i}$ for all $i \in (1, \dots, J)$.} By exponential tightness we can apply \cite[Lemma 3.4]{ShwartzWeiss05} to obtain  absolute continuity of $\rr$ on the set $I_{[0,T]}(\rr)<K$, so that there exists $\tau_- > 0$ with $\inf_{\{\rr~:~I_{[0,T]}(\rr)<K, i\in \mathbb N\}} \tau_{i}- \tau_{i-1}> \tau_-$. Consequently $J = T/\tau_-$ is finite.

  {
  The bound $\sup_{[0,T]} \|\rr-\rrr_\delta\| <  2\delta/3$ follows from the construction \eref{e:rtilde} and our choice of $\xi:= \min(1,(\kappa''/3)^{J+1}/3, \epsilon)$. Indeed for the time interval $[0, t_\delta]$ we have
  \begin{equ}
    \sup_{t \in [0, t_\delta]} \|x_0+w_j t -\rr(t)\| \leq t_\delta  + \omega_\rr(t_\delta) \leq \frac{2\delta}3\,,
  \end{equ}
  {where $\omega_\rr$} is the (subadditive) modulus of continuity of $\rr$.}
To extend this estimate beyond $t_\delta$ we note that
  \begin{equ}\label{e:distance}
    \sup_{t\in [0,T]}\|\rrr_\delta(t)-\rr(t)\| <   \sum_{k = 0}^J \beta^k t_\delta + \omega_\rr(\beta^k t_\delta) \leq 2 \xi \delta \sum_{k = 0}^J \beta^k \,.
    \end{equ}
    Then,  by our choice $\beta = 3/\kappa''$ and since
    \begin{equ}
        \sum_{l=0}^k \pc{\frac{3}{\kappa''}}^l =  \frac{1-(3/\kappa'')^{k+1}}{1-3/\kappa''} \leq  \frac{\kappa''}2(3/\kappa'')^{k+1} \,.\label{e:totalshift}
    \end{equ}
    we see that by boundedness of $k\leq J$ and by the definition of $\xi$ 
    we have $\sup_{t\in [0,T]}\|\rrr_\delta(t)-\rr(t)\|< 2 \delta/3$.


We now prove that $\bigcup_{t \in [t_\delta, T]}\mathcal B_{\kappa_- t_\delta}(\rrr_\delta(t)) \cap \partial \Scal = \emptyset$ by induction on $k$. For $k = 0$ the claim follows directly by \aref{a:escape}a) for $\delta<\epsilon'/2$. Then, by \eref{e:totalshift} as the $k+1$-th shift is of length $t_\delta (3/\kappa'')^{k+1}$ we must have that
     $\rrr_\delta(t)
     $  is at least at distance $t_\delta \kappa_- (1-\kappa''/2)  (3/\kappa'')^{k+1} > t_\delta \kappa_-/2   (3/\kappa'')^{k+1}>  \kappa_- t_\delta$ from $\partial \mathcal S$ for
     $t \in [\tau_{k+1}+ \Delta_{k+1}^\beta,\tau_{k+2}+ \Delta_{k+1}^\beta]$.
     Since the initial point of the shift satisfies the required condition by assumption, and that this property is conserved on $[\tau_k + \Delta_k^\beta, \tau_k + \Delta_{k+1}^\beta]$ (\ie during a shift) by \aref{a:escape}a) we obtain the desired result.

     Finally, we show that for every  $k\in (1,\dots,J)$ and $a\in \Rr^d$ with $\|a\|<t_\delta \kappa''/2$ there exists $\tilde w\in\mathcal B_{\kappa''}(0)$ such that $\rrr_\delta(\tau_k + \Delta_{k+1}^\beta)+a = z(\tau_k) + \beta^k t_\delta (w_{i_{k}}+ \tilde w)$. 
     This follows immediately from \eref{e:totalshift}, since we have
     \begin{equs}
      \|\rrr_\delta(\tau_k + \Delta_{k+1}^\beta)+a - z(\tau_k) - \beta^{k} t_\delta w_{i_{k}}\|&=\|\sum_{l=0}^{k-1} \pc{\frac{3}{\kappa''}}^l t_\delta w_{i_l}  + a\| \leq \sum_{l=0}^{k-1} \pc{\frac{3}{\kappa''}}^l t_\delta  + \|a\| \\&\leq
      \frac{t_\delta \kappa''}2\pc{1+(3/\kappa'')^{k}} \leq \kappa''\|(3/\kappa'')^{k} t_\delta w_{i_{k}}\|\,,
    \end{equs}
    concluding the proof of the lemma.
\end{proof}

\section{LDP upper bound}\label{Sec:upper}\label{s:ub}

Similar results under slightly more restrictive assumptions are well known \eg \cite{dupuis91, ShwartzWeiss95} with jump rates bounded away from 0.
A sufficiently general result is available in \cite{pattersonrenger19}, but under assumptions on the initial condition that are not satisfied here.
We will sketch the application of the ideas from \cite{pattersonrenger19} to the setting of this paper.

In order to prove the upper bound, we will temporarily enlarge the state space in order to include the integrated flux of each reaction, i.\,e. we consider the process $(X^v(t),W^v(t))\in \RR^d\times\RR_{\geq0}^{|\Rcal|}$ with initial condition $(x_0^v,0)$ and generator:
\[
Q^vf(x,w)=\sum_{r\in\Rcal}v\Lambda^{v}_r(x)(f(x+v^{-1} {\gamma^r}, w+v^{-1} {\delta^r})-f(x,w)),
\]
for $\delta^r_r=1$ and $\delta^r_s=0$ for all $s\neq r$.
It is clear that the marginal distribution of the $X^v$-coordinate  is the distribution of our original process. In the following proposition, we prove a large-deviation upper bound for this process. To shorten notation, let us define for any $w\in\RR^{|\Rcal|}$ the vector $\Gamma w\colon =\sum_{r\in\Rcal}\gamma^r w_r\in \RR^d$.

Let $C^1_\mathrm{c}([0,T);\RR^{|\Rcal|})$ be the space of continuous and differentiable compactly supported functions from $[0,T)$ to $\RR^{|\Rcal|}$. For $x \in D_u(0,T;\S)$, $w \in D_u(0,T;\RR_{\geq0}^{|\Rcal|})$
and $\zeta \in C^1_\mathrm{c}([0,T);\RR^{|\Rcal|})$ we set
\begin{equation*}
G(x,w,\zeta):= - \int_0^T \sum_{r\in\Rcal} \left(\dot \zeta_r(t) w_r(t) + \left[e^{\zeta_r(t)} - 1\right]\lambda_r(x(t))\right)\dd t
\end{equation*}
and use this to define a partial rate function
\begin{equation*}
\widetilde \Jcal_{\Scal}(x,w) :=
\begin{cases}
  \sup_{\zeta \in C_\mathrm{c}^1\left([0,T); \RR^\Rcal\right)} G(x,w, \zeta) & \text{ if } x(t) = x_0 + \Gamma w(t),  \qquad x(t)\in \Scal \quad \forall t \in [0,T) \\
  +\infty & \text{ otherwise.}
\end{cases}
\end{equation*}

\begin{proposition}\label{p:G-cont}
Let $\zeta \in C^1_\mathrm{c}([0,T);\RR^{|\Rcal|})$ and suppose Assumption~\ref{a:rate-converg} holds, then $(x,w) \mapsto G(x,w,\zeta)$ is continuous from $D_u(0,T; \Rr^d \times \RR_{\geq0}^{|\Rcal|})$  to $\RR$.
\end{proposition}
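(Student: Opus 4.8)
The plan is to show continuity of $(x,w)\mapsto G(x,w,\zeta)$ for a fixed test function $\zeta\in C^1_{\mathrm c}([0,T);\RR^{|\Rcal|})$ by a direct estimate, exploiting that $\zeta$ and $\dot\zeta$ are bounded with compact support in $[0,T)$ and that the limiting rates $\lambda_r$ are Lipschitz (hence bounded) on the relevant domain. Write
\begin{equation*}
  G(x,w,\zeta) - G(\tilde x,\tilde w,\zeta)
  = -\int_0^T\!\sum_{r\in\Rcal}\dot\zeta_r(t)\,\bigl(w_r(t)-\tilde w_r(t)\bigr)\,\dd t
    -\int_0^T\!\sum_{r\in\Rcal}\bigl[e^{\zeta_r(t)}-1\bigr]\bigl(\lambda_r(x(t))-\lambda_r(\tilde x(t))\bigr)\,\dd t,
\end{equation*}
so the problem splits into two pieces that can be bounded separately.

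First I would handle the drift-in-$w$ term: since $\zeta$ has compact support in $[0,T)$, $\|\dot\zeta\|_\infty<\infty$, and $\bigl|\int_0^T\sum_r\dot\zeta_r(t)(w_r(t)-\tilde w_r(t))\,\dd t\bigr|\le T\,|\Rcal|\,\|\dot\zeta\|_\infty\,\|w-\tilde w\|_{D_u}$, where $\|\cdot\|_{D_u}$ denotes the uniform norm on $[0,T]$. This is manifestly continuous in $w$ with respect to uniform convergence. For the second term, $\bigl|e^{\zeta_r(t)}-1\bigr|\le e^{\|\zeta\|_\infty}+1=:c_\zeta<\infty$, and by Assumption~\ref{a:rate-converg}a) each $\lambda_r$ is Lipschitz on a neighbourhood of $\Scal$, with Lipschitz constant $L_r$; hence $|\lambda_r(x(t))-\lambda_r(\tilde x(t))|\le L_r\|x(t)-\tilde x(t)\|\le L_r\|x-\tilde x\|_{D_u}$, uniformly in $t$. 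Therefore
\begin{equation*}
  \Bigl|\int_0^T\!\sum_r[e^{\zeta_r}-1](\lambda_r(x)-\lambda_r(\tilde x))\,\dd t\Bigr|
  \le T\,c_\zeta\,\Bigl(\sum_{r\in\Rcal}L_r\Bigr)\,\|x-\tilde x\|_{D_u}.
\end{equation*}
Combining the two estimates yields $|G(x,w,\zeta)-G(\tilde x,\tilde w,\zeta)|\le C_\zeta\bigl(\|x-\tilde x\|_{D_u}+\|w-\tilde w\|_{D_u}\bigr)$ for a constant $C_\zeta$ depending only on $T$, $|\Rcal|$, $\zeta$ and the Lipschitz constants of the $\lambda_r$, which is precisely Lipschitz (in particular continuous) dependence on $(x,w)\in D_u(0,T;\Rr^d\times\RR_{\geq0}^{|\Rcal|})$.

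There is essentially no serious obstacle here; the only point requiring a word of care is that the Lipschitz bound on $\lambda_r$ is only assumed on a neighbourhood of $\Scal$, so one should either restrict attention to a bounded set of paths (which suffices, since continuity is a local property and one may fix a bounded uniform neighbourhood of any given $x$) or invoke the boundedness and Lipschitz continuity of the rates on compacts noted just after the statement of Theorem~\ref{th:main}. One should also remark that finiteness of $G$ itself is automatic because $w\in D_u$ takes values in a bounded set on $[0,T]$ and $\lambda_r$ is bounded there; no integrability subtlety arises since $\zeta$ is compactly supported in $[0,T)$.
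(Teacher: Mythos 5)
Your proof is correct and takes essentially the same approach as the paper: the same splitting of $G(x,w,\zeta)-G(\tilde x,\tilde w,\zeta)$ into a $w$-term and a $\lambda$-term, with the same bound on the first. The only cosmetic difference is that the paper finishes the $\lambda$-term with dominated convergence (using continuity and boundedness of the $\lambda_r$), whereas you use the Lipschitz bound directly to get a quantitative estimate; both are fine, and you correctly flag that the Lipschitz bound is only guaranteed on a neighbourhood of $\Scal$.
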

\begin{proof}
We have
\begin{equation*}
\abs{G(x,w,\zeta) - G(x^\prime, w^\prime, \zeta)}
\leq
\norm{\dot \zeta}_\infty \norm{w - w^\prime}_{\infty}T
 + \left(\e^{\norm{\zeta}_\infty} +1 \right) \int_0^T \sum_{r\in\R} \abs{\lambda_r(x(t)) -  \lambda_r(x^\prime(t))} \dd t.
\end{equation*}
We can use the continuity of the $\lambda_r$ from Assumption~\ref{a:rate-converg}a), along with the boundedness given by the compactness of $\Scal$, and apply dominated convergence when $x^\prime \to x$ to see that the second term of our estimate vanishes, as well as the first term when $w^\prime\to w$.
\end{proof}

\begin{proposition}\label{p:upper-bound}
Let $\Kcal$ be a closed subset of the space of càdlàg paths $ D_u(0,T;\Rr^d\times\RR_{\geq0}^{|\Rcal|})$, then under Assumption~\ref{a:rate-converg}
\begin{equation*}
\limsup_{v \to \infty} \frac1v \log \PP\left((X^v,W^v) \in \Kcal \right) \leq -\inf_{(x,w)\in\Kcal} \widetilde{\Jcal}_{\Scal}(x,w).
\end{equation*}
\end{proposition}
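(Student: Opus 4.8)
The plan is to prove Proposition~\ref{p:upper-bound} by the standard exponential-martingale / Laplace-principle route, exploiting the fact that $G(\,\cdot\,,\,\cdot\,,\zeta)$ is continuous (Proposition~\ref{p:G-cont}) and that exponential tightness is already available (from the boundedness of the rates on the compact set $\Scal$, as noted directly below Theorem~\ref{th:main}). First I would construct, for each fixed $\zeta\in C^1_{\mathrm c}([0,T);\RR^{|\Rcal|})$, an exponential (mean-one) martingale associated with the enlarged process $(X^v,W^v)$. Concretely, writing $M^v_\zeta(t) := \exp\big(v[\,\textstyle\sum_r \zeta_r(t) W^v_r(t) - \int_0^t \sum_r \dot\zeta_r(s) W^v_r(s)\,\dd s - \int_0^t \sum_r (e^{\zeta_r(s)}-1)\Lambda^v_r(X^v(s))\,\dd s\,]\big)$, a Dynkin-formula computation with the generator $Q^v$ shows $M^v_\zeta$ is a supermartingale (in fact a martingale under mild integrability, which holds here since rates are bounded), so $\EE[M^v_\zeta(T)]\le 1$. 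Taking $\zeta$ compactly supported in $[0,T)$ kills the boundary term at $t=T$ and leaves $M^v_\zeta(T) = \exp\big(v\,[G(X^v,W^v,\zeta) + \text{error}_v]\big)$, where $\text{error}_v$ collects the discrepancy $\int_0^T\sum_r (e^{\zeta_r}-1)(\Lambda^v_r(X^v)-\lambda_r(X^v))\,\dd s$, which tends to $0$ uniformly by Assumption~\ref{a:rate-converg}a).

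Next, from $\EE[M^v_\zeta(T)]\le 1$ and Chebyshev/Markov, for any closed $\Kcal$ and any $\zeta$,
\[
\PP\big((X^v,W^v)\in\Kcal\big) \le \EE\big[M^v_\zeta(T)\,\one_{(X^v,W^v)\in\Kcal}\big]\cdot \exp\big(-v\inf_{(x,w)\in\Kcal}[G(x,w,\zeta) - o_v(1)]\big) \le e^{v\,o_v(1)}\exp\big(-v\inf_{\Kcal} G(\cdot,\cdot,\zeta)\big),
\]
using Proposition~\ref{p:G-cont} (continuity of $G(\cdot,\cdot,\zeta)$) to make sense of the infimum over the closed set and to absorb the $o_v(1)$ error into the exponent uniformly on the (effectively compact, by exponential tightness) relevant part of path space. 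This yields
\[
\limsup_{v\to\infty}\frac1v\log\PP\big((X^v,W^v)\in\Kcal\big)\le -\sup_{\zeta\in C^1_{\mathrm c}([0,T);\RR^{|\Rcal|})}\ \inf_{(x,w)\in\Kcal} G(x,w,\zeta).
\]
The remaining task is the minimax exchange: turning $\sup_\zeta\inf_{\Kcal}$ into $\inf_{\Kcal}\sup_\zeta = \inf_{\Kcal}\widetilde\Jcal_\Scal$. Here I would first reduce to compact $\Kcal$ by exponential tightness (restricting to a large compact set of paths and using that the probability of leaving it is superexponentially small), and then invoke a standard minimax lemma (e.g.\ the one in \cite[Appendix]{feng2006} or the Sion-type argument used in \cite[Lemma~4.5.1]{Dembo1998}): $G$ is affine-hence-concave in $\zeta$ and, after the tightness reduction, $\inf$ is over a compact set on which $(x,w)\mapsto G(x,w,\zeta)$ is continuous (upper semicontinuous suffices) and convex-like enough. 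One also needs to check the constraint clause in $\widetilde\Jcal_\Scal$ is reproduced: if $(x,w)$ violates $x=x_0+\Gamma w$ pointwise (or $x\notin\Scal$), then $\sup_\zeta G(x,w,\zeta)=+\infty$, which follows by choosing $\zeta$ concentrated in time/coordinate where the constraint fails and scaling it up — a routine but necessary verification.

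The main obstacle I expect is the minimax exchange together with the reduction to a compact set: one must be careful that the class of test functions $C^1_{\mathrm c}([0,T);\RR^{|\Rcal|})$ is rich enough (density, and the role of the open interval $[0,T)$ so that no mass sits at the terminal time), that the exponential tightness estimate is genuinely superexponential at speed $v$ so it can be pasted with the bound above via the standard "$\limsup$ of a max is the max of $\limsup$s" lemma, and that $G(x,w,\zeta)$ is jointly well-behaved enough for Sion's theorem to apply on the compactified path space. The Dynkin/martingale computation, the $o_v(1)$ control of the rate discrepancy, and the infinite-value case of the constraint are all routine given Assumption~\ref{a:rate-converg} and Proposition~\ref{p:G-cont}; I would present those briefly and spend the bulk of the argument on the minimax step, citing \cite{pattersonrenger19} for the parallel argument and indicating the (minor) modifications needed because the initial condition here is deterministic rather than "sufficiently random".
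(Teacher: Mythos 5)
The structure you propose (exponential martingale for the tilted measure, control of the $\Lambda^v_r - \lambda_r$ discrepancy via Assumption~\ref{a:rate-converg}a), reduction to compact $\Kcal$ by exponential tightness) matches the paper's, but the final step as you describe it does not work. You arrive at $\limsup_v \frac1v\log\PP(\Kcal)\le -\sup_\zeta\inf_\Kcal G$ and then propose to swap the order by a Sion-type minimax theorem, saying $(x,w)\mapsto G(x,w,\zeta)$ is ``convex-like enough''. It is not: $G$ contains $-\int(e^{\zeta_r}-1)\lambda_r(x(t))\,\dd t$, and the rates $\lambda_r$ are only required to be Lipschitz (for mass-action kinetics they are polynomial and non-concave), so $(x,w)\mapsto G$ has no convexity in $x$; nor is $\Kcal$ convex. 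Sion's theorem therefore does not apply. The paper avoids the exchange entirely: for each $(x,w)\in\Kcal$ it picks a near-optimal $\zeta[x,w]$, uses Proposition~\ref{p:G-cont} to define the open set $\Gcal_\epsilon(x,w)=\{(x',w'): G(x',w',\zeta[x,w])\ge G(x,w,\zeta[x,w])-\epsilon\}$, extracts a finite subcover of the compact $\Kcal$, applies the single-$\zeta$ tilting bound on each piece, and combines by the Laplace principle. That ``local estimate $\Rightarrow$ compact upper bound'' lemma needs only upper semicontinuity of $(x,w)\mapsto G(\cdot,\cdot,\zeta)$, not convexity; if you replace the Sion appeal with this finite-cover argument, you recover the paper's proof.

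A secondary error: you claim that if $(x,w)$ violates $x=x_0+\Gamma w$ then $\sup_\zeta G(x,w,\zeta)=+\infty$ ``by choosing $\zeta$ concentrated where the constraint fails''. This is false — $G$ depends on $x$ only through $\lambda_r(x)$ and on $w$ only through $w_r$, and contains no term coupling the two, so $\sup_\zeta G$ cannot detect the constraint (take $w\equiv 0$ and $x\equiv y_0\ne x_0$; then $\sup_\zeta G=\int\sum_r\lambda_r(y_0)\,\dd t<\infty$). The constraint clause in $\widetilde\Jcal_\Scal$ is put in by hand in the definition, and in the proof the degenerate case where no $(x,w)\in\Kcal$ satisfies $x=x_0+\Gamma w$ is handled by observing that $X^v(t)=X^v(0)+\Gamma W^v(t)$ holds a.s., so $\PP((X^v,W^v)\in\Kcal)=0$ identically.
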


\begin{proof}
Assumption~\ref{a:rate-converg} implies exponential tightness -- see discussion below the statement of \tref{th:main} -- therefore we may assume that $\Kcal$ is compact.

Fix $\epsilon \in (0,1)$, then for every $(x,w) \in D_u(0,T;\Scal\times\RR_{\geq0}^{|\Rcal|})$ satisfying $x = x_0 + \Gamma w$ one can find $\zeta[x,w] \in C_\mathrm{c}^1\left([0,T); \RR^\Rcal\right)$ such that
\begin{equation*}
G\left(x,w, \zeta[x,w]\right) \geq \min\left(\widetilde{\Jcal}(x,w), \epsilon^{-1}\right) - \epsilon
\end{equation*}
and define neighbourhoods in path space
\begin{equation*}
\Gcal_{\epsilon}(x,w) := \left\{(x^\prime, w^\prime) : G(x^\prime, w^\prime, \zeta[x,w]) \geq G(x,w, \zeta[x,w]) -\epsilon\right\}.
\end{equation*}
We may use Proposition~\ref{p:G-cont} to see that the $\Gcal_{\epsilon}(x,w)$ are open and so can find a finite cover $\Gcal_{\epsilon}(x^i,w^i)\ i=1,\dotsc, n$ for $\Kcal$.

Now following \cite[Thm A.3]{pattersonrenger19} and using the fact that the jump rates are bounded over $\Scal$ (because of \aref{a:rate-converg}a) and compactness of $\Scal$),
we define tilted measures $\PP^{\zeta}$ via mean 1 non-negative martingales so that
\begin{equation*}
\frac1v\log \frac{\d\,\PP^\zeta\circ\left(X^v,W^v\right)^{-1}}{\d\,\PP\circ\left(X^v,W^v\right)^{-1}}(x,w)=\\
{-\int_0^T\sum_{r\in\Rcal} w_r(t) \dot \zeta_r(t)+\Lambda^{v}_r(x(t))\left(\e^{ \zeta_r(t)}-1\right)\d t}:=G^v(x,w,\zeta).
\end{equation*}
Slightly adapting and simplifying the argument of  \cite[Lemma 4.7]{pattersonrenger19}
\begin{multline}
\frac1v \log \PP\left((X^v, W^v) \in \Gcal_{\epsilon}(x,w)\right) \\
\leq
\frac1v \log \PP^{\zeta[x,w]}\left((X^v, W^v) \in \Gcal_{\epsilon}(x,w)\right)
 - \inf_{(x^\prime, w^\prime) \in \Gcal_{\epsilon}(x,w)} G^v(x^\prime, w^\prime, \zeta[x,w]) \\
 \leq  -\inf_{(x^\prime, w^\prime) \in \Gcal_{\epsilon}(x,w)} \abs{G^v(x^\prime, w^\prime, \zeta[x,w])-G(x^\prime, w^\prime, \zeta[x,w]) }
 -  \inf_{(x^\prime, w^\prime) \in \Gcal_{\epsilon}(x,w)} G(x^\prime, w^\prime, \zeta[x,w]).
\end{multline}
The first term on the final line vanishes as $v \to \infty$ by the uniform convergence from Assumption~\ref{a:rate-converg}b).
Thus for $i =1, \dotsc, n$
\begin{equation*}
\frac1v \log \PP\left((X^v, W^v) \in \Gcal_{\epsilon}(x^i,w^i)\right)
\leq
-  \min\left(\widetilde{\Jcal}(x^i,w^i), \epsilon^{-1}\right) - 2\epsilon
\end{equation*}
and by the Laplace principle
\begin{equation*}
\frac1v \log \PP\left((X^v, W^v) \in \Kcal\right)
\leq
- \min_{i=1,\dotsc, n}  \min\left(\widetilde{\Jcal}(x^i,w^i), \epsilon^{-1}\right) - 2\epsilon
\leq \inf_{(x^\prime, w^\prime) \in \Kcal} \min\left(\Jcal(x^\prime, w^\prime), \epsilon^{-1}\right)  - 2\epsilon,
\end{equation*}
which completes the proof as $\epsilon$ can be taken arbitrarily small.

If $\nexists$ $(x,w) \in \Kcal$ satisfying $x = x_0 + \Gamma w$, then $\limsup_{v \to \infty} \frac1v \log \PP\left((X^v,W^v) \in \Kcal \right) \leq-\infty$, since, by definition  $X^v(t)=X^v(0)+\Gamma W^v(t)$ a.s. for all $t\in[0,T]$.
\end{proof}


\begin{proposition}\label{p:equivalence}
If Assumption~\ref{a:rate-converg} holds 
then $\Jcal=\widetilde\Jcal_\Scal$, where
\begin{equation*}
{\Jcal}(x,w) :=
  \begin{cases}
    \sum_{r\in\Rcal} \int_0^T\! \Hcal(\dot w(t)| \lambda(x(t)))\,\dd t, &(x,w)\in \Acal\Ccal(0,T;\Scal\times\RR^{|\R|}), \dot x=\Gamma \dot w, x(0)=x_0\\
    \infty,                                                        &\text{otherwise}.
  \end{cases}
\end{equation*}
where $\Hcal$ is defined in \eqref{e:lbrf2}. 
\end{proposition}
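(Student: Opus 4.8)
The statement is a purely variational identity: for each fixed pair of paths $(x,w)$, I must show that the Legendre-type supremum $\widetilde{\Jcal}_\Scal(x,w) = \sup_{\zeta} G(x,w,\zeta)$ equals the relative-entropy integral $\Jcal(x,w)$. The plan is to proceed through the following standard reductions. First, I would check that the two functionals have the same effective domain: if $x \neq x_0 + \Gamma w$ (equivalently $\dot x \neq \Gamma \dot w$, together with the initial condition), or if $x$ leaves $\Scal$, then both sides are $+\infty$ by definition — so I may restrict to pairs with $x(t) = x_0 + \Gamma w(t)$ and $x(t)\in\Scal$ for all $t$. Next, within this domain I would separately treat the case where $w$ fails to be absolutely continuous: in that case $\Jcal = +\infty$ by definition, and one shows $\widetilde{\Jcal}_\Scal = +\infty$ as well by exhibiting test functions $\zeta$ (concentrated near a point of non-absolute-continuity, or exploiting that $w$ has unbounded variation in some coordinate) that drive $G(x,w,\zeta)$ to $+\infty$; this is essentially the classical argument that the Legendre transform of a finite convex integrand ``sees'' only absolutely continuous paths.

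**Main argument.** The heart of the proof is the case $w \in \Acal\Ccal(0,T;\RR^{|\Rcal|})$ with $x = x_0 + \Gamma w$ staying in $\Scal$. Here I would use the pointwise (in time) duality: for fixed $t$, the function
\begin{equation*}
\zeta \mapsto \sum_{r\in\Rcal}\left(\dot\zeta_r(t) w_r(t) + [e^{\zeta_r(t)} - 1]\lambda_r(x(t))\right)
\end{equation*}
is not quite in the right form, so I would first integrate by parts in time: $\int_0^T \dot\zeta_r(t) w_r(t)\,\dd t = -\int_0^T \zeta_r(t)\dot w_r(t)\,\dd t$, using that $\zeta$ is compactly supported in $[0,T)$ (so the boundary terms vanish) and that $w$ is absolutely continuous. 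This rewrites
\begin{equation*}
G(x,w,\zeta) = \int_0^T \sum_{r\in\Rcal}\left(\zeta_r(t)\dot w_r(t) - [e^{\zeta_r(t)} - 1]\lambda_r(x(t))\right)\dd t.
\end{equation*}
Now for each fixed $t$ and each $r$, the one-dimensional Legendre duality gives
\begin{equation*}
\sup_{\theta\in\RR}\left(\theta\, \dot w_r(t) - [e^\theta - 1]\lambda_r(x(t))\right) = \Hcal\big(\dot w_r(t)\,\big|\,\lambda_r(x(t))\big),
\end{equation*}
with the convention on $0\log 0$, and the supremum over $\zeta$ of the integral equals the integral of the pointwise suprema — this is the measurable-selection / interchange-of-sup-and-integral step, valid because the integrand is a normal convex integrand and one can approximate the pointwise optimal $\theta^*(t)$ by compactly supported $C^1$ functions $\zeta$ (truncating near $t=T$ and mollifying, controlling the error by dominated convergence using boundedness of the rates on the compact $\Scal$). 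Summing over $r\in\Rcal$ and integrating yields exactly $\Jcal(x,w)$. For the inequality $\widetilde{\Jcal}_\Scal \le \Jcal$ one uses that every admissible $\zeta$ gives, pointwise, a value $\le \Hcal(\dot w(t)|\lambda(x(t)))$; for $\widetilde{\Jcal}_\Scal \ge \Jcal$ one uses the approximation just described.

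**Main obstacle.** I expect the genuinely delicate point to be the interchange of the supremum over $\zeta\in C^1_{\mathrm c}([0,T);\RR^{|\Rcal|})$ with the time integral — i.e., showing that restricting to \emph{smooth, compactly supported} test functions loses nothing relative to taking the pointwise-in-$t$ optimizer. The pointwise optimizer $\theta^*_r(t) = \log(\dot w_r(t)/\lambda_r(x(t)))$ can be unbounded (when $\lambda_r(x(t))\to 0$ or $\dot w_r(t)$ is large) and is only measurable, not continuous, so one needs a careful truncation-and-mollification argument, handling the region where $\lambda_r(x(t)) = 0$ (there $\dot w_r(t)$ must also vanish for finiteness of $\Jcal$, else both sides are $+\infty$) and the compact-support requirement at $t=T$ separately. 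This is precisely the kind of argument carried out in \cite{ShwartzWeiss95} and \cite{pattersonrenger19}; I would invoke or closely mirror their lemma rather than redo it in full. The remaining steps — domain identification, the non-absolutely-continuous case, and the integration by parts — are routine given the boundedness and continuity of the rates guaranteed by Assumption~\ref{a:rate-converg}a) together with compactness of $\Scal$.
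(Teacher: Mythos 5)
Your proposal is correct and follows essentially the same route as the paper's own (very terse) proof, which reduces to the same domain identification, handles the non-absolutely-continuous case by exhibiting a sequence $\zeta^n$ with $\sup_{n,r,t}|\zeta^n_r(t)|\leq 1$ driving $G(x,w,\zeta^n)\to+\infty$, and then defers the absolutely-continuous case to ``approximation arguments'' citing \cite[Prop 3.5]{pattersonrenger19}. You spell out the integration by parts, the pointwise Legendre duality between $\theta\mapsto\lambda(e^\theta-1)$ and the relative-entropy integrand $\Hcal$, and correctly flag the interchange of supremum and integral as the delicate step; this is exactly what the cited reference supplies, so the two proofs coincide in substance.
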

\begin{proof}
The proof follows \cite[Prop 3.5]{pattersonrenger19}; we assume that $\dot x=\Gamma \dot w$ and $x(t)\in \Scal$ a.e. $t\in[0,T)$ throughout.
Suppose that $(x,w)\notin \Acal\Ccal(0,T;\Scal\times \RR_{\geq0}^{|\Rcal|})$, then one can find a sequence $\zeta^n \in C_\mathrm{c}^1\left([0,T); \RR^{|\Rcal|}\right)$ with $\sup_{n,\rr,t} \abs{\zeta^n_\rr(t)} \leq 1$ but
$\lim_{n \rightarrow \infty} \int_0^T \sum_{r\in\Rcal}\dot \zeta^n_\rr(t) w(t) \dd t = -\infty$
and thus $\Jcal (x,w) \geq \lim_{n \rightarrow \infty}G(c,w,\zeta^n) = + \infty$,
that is, $\Jcal =\tilde\Jcal_{\Scal}$ if the path is not absolutely continuous.


One then shows that $\Jcal (x,w)=\widetilde{\Jcal}_{\Scal}(x,w)$ for any $(x,w)\in\AC(0,T;\Scal\times\RR^{|\R|})$ using approximation arguments.
\end{proof}

\begin{corollary}\label{cor:upp_bound}
If Assumption~\ref{a:rate-converg} holds then the large-deviation upper bound holds with good rate functional:
\begin{align*}
  \inf_{\substack{w\in W^{1,1}(0,T;\RR^{|\R|}_{\geq0}) \\ \dot x=\Gamma \dot w}}{\Jcal}(x,w)=I^{x_0}_{[0,T]}(x)=
  \int_0^T\! \sup_{\vartheta\in \RR^d}\Big\lbrack \vartheta\cdot\dot x(t) - \sum_{r\in\R}\lambda_r(x(t))\big(\exp(\vartheta\cdot\gamma^r)-1\big)\Big\rbrack\,\d t.
\end{align*}
\end{corollary}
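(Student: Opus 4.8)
The plan is to derive the corollary from the two preceding propositions by a projection (contraction) argument, and then to recast the resulting functional in Legendre form. First I would combine Proposition~\ref{p:upper-bound} with Proposition~\ref{p:equivalence} to obtain the large-deviation upper bound for the flux-enriched process $(X^v,W^v)$ on $D_u(0,T;\RR^d\times\RR_{\geq0}^{|\R|})$ with rate functional $\Jcal$. Since the coordinate projection $\pi\colon(x,w)\mapsto x$ is continuous and $\pi(X^v,W^v)=X^v$, every closed $F\subseteq D_u(0,T;\RR^d)$ has closed preimage $\pi^{-1}(F)$, so the bound transfers verbatim: $\limsup_v\frac1v\log\PP(X^v\in F)\le-\inf_{x\in F}\tilde I(x)$, where $\tilde I(x):=\inf\{\Jcal(x,w):w\in W^{1,1}(0,T;\RR_{\geq0}^{|\R|}),\ \dot x=\Gamma\dot w\}$, with the convention that the infimum over the empty set is $+\infty$ (so $\tilde I(x)=+\infty$ unless $x\in\AC(0,T;\Scal)$, $x(0)=x_0$, and $\dot x(t)\in\mathrm{span}_{r\in\R}(\gamma^r)$ a.e.). Goodness of $\tilde I$ is then inherited from the exponential tightness of $X^v$ — valid under Assumption~\ref{a:rate-converg}, as recorded after Theorem~\ref{th:main} — via \cite[Lem.~1.2.18]{Dembo1998}.

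The second step is to identify $\tilde I$ with $I^{x_0}_{[0,T]}$. Fix $x\in\AC(0,T;\Scal)$ with $x(0)=x_0$ (otherwise both sides are $+\infty$). For any admissible $w$ one has the pointwise bound $\Hcal(\dot w(t)\mid\lambda(x(t)))\ge\inf\{\Hcal(\mu\mid\lambda(x(t))):\sum_{r\in\R}\mu_r\gamma^r=\dot x(t)\}$, which upon integration gives $\tilde I(x)\ge I_{[0,T]}(x)$. For the reverse inequality I would select, for a.e.\ $t$, a minimizer $\mu^*(t)$ of $\mu\mapsto\Hcal(\mu\mid\lambda(x(t)))$ under the constraint $\sum_{r\in\R}\mu_r\gamma^r=\dot x(t)$; such a minimizer exists since the sublevel sets of $\Hcal(\cdot\mid\lambda)$ are compact, as already used in the proof of Lemma~\ref{l:42}. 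Setting $w(t):=\int_0^t\mu^*(s)\,\dd s$ yields an admissible flux with $\Jcal(x,w)=\int_0^T\Hcal(\mu^*(t)\mid\lambda(x(t)))\,\dd t=I_{[0,T]}(x)$, hence $\tilde I(x)\le I_{[0,T]}(x)$. Finally, the convex-duality identity recalled below \eqref{e:ell} (see \cite{ShwartzWeiss95}) says $\inf\{\Hcal(\mu\mid\lambda(x)):\sum_r\mu_r\gamma^r=y\}=\sup_{\vartheta\in\RR^d}\big[\vartheta\cdot y-\sum_{r\in\R}\lambda_r(x)(e^{\vartheta\cdot\gamma^r}-1)\big]$, so $I_{[0,T]}(x)=\int_0^T\sup_{\vartheta\in\RR^d}\big[\vartheta\cdot\dot x(t)-\sum_{r\in\R}\lambda_r(x(t))(e^{\vartheta\cdot\gamma^r}-1)\big]\,\dd t$, which is the displayed expression.

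The delicate point I anticipate is the construction of the competitor $w$ in the second step, namely the measurable selection $t\mapsto\mu^*(t)$ and its integrability. Measurability should follow from lower semicontinuity of $\Hcal$, compactness of its sublevel sets, and measurability of $t\mapsto(\lambda(x(t)),\dot x(t))$, by a standard measurable-selection argument; integrability ($\mu^*\in L^1(0,T)$, hence $w\in W^{1,1}$) should follow from the a priori bound $\mu^*_r(t)\le C_0\bigl(1+\ell(x(t),\dot x(t))\bigr)$ invoked in the proof of Lemma~\ref{l:42}, which lies in $L^1$ exactly when $I_{[0,T]}(x)<\infty$. The remaining manipulations — the interchange of $\inf$ and $\int$ and the bookkeeping of the cases where a side is $+\infty$ — are routine.
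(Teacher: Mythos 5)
Your proof is correct and follows the paper's strategy closely: contraction from Propositions~\ref{p:upper-bound} and~\ref{p:equivalence}, the easy inequality $\tilde I\ge I_{[0,T]}^{x_0}$ by commuting the infimum and the time integral, a measurable-selection competitor for the reverse inequality, and convex duality for the Legendre form. The only divergence is in how $L^1$-integrability of the optimal flux is certified: you invoke the a priori bound $\mu^*_r(t)\le C_0\big(1+\ell(x(t),\dot x(t))\big)$ from the proof of Lemma~\ref{l:42}, whereas the paper appeals to Jensen's inequality (superlinear growth of $\Hcal$ in its first argument), and both routes are valid.
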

\begin{proof}
  The large-deviation upper bound with the rate functional on the left-hand side follows from Propositions~\ref{p:upper-bound}, \ref{p:equivalence} and the contraction principle. For non-absolutely continuous paths both the left-hand and right-hand sides will diverge: The left-hand side since the infimum will be taken over an empty set, and the right-hand by a similar argument as in Proposition~\ref{p:equivalence}.

Now take an arbitrary $x\in \Acal\Ccal(0,T;\RR^d)$. Then
\begin{align*}
  \inf_{\substack{w\in W^{1,1}(0,T;\RR^{|\R|}_{\geq0}) \\ \dot x=\Gamma \dot w}}{\Jcal}(x,w)
    &\geq
  \underbrace{\int_0^T\! \inf_{j\in \RR^{|\R|}_{\geq0}: \dot x(t)=\Gamma j} \Hcal\big(j(t)|\lambda(x(t))\big)\,\d t}_{=I^{x_0}_{[0,T]}(x)}\\
    &=
  \int_0^T\! \sup_{\vartheta\in \RR^d}\Big\lbrack \vartheta\cdot\dot x(t) - \sum_{r\in\R}\lambda_r(x(t))\big(\exp(\vartheta\cdot\gamma^r)-1\big)\Big\rbrack\,\d t,\\
\end{align*}
where the equality follows from convex duality, pointwise in $t$. To show that the inequality is in fact an equality, we may assume that the left-hand side is finite.
Hence from now on we may assume that $x\in W^{1,1}(0,T;\Scal)$. By Jensen's inequality, any path $j:(0,T)\to\RR^{|\R|}_{\geq0}$ for which $\int_0^T\!\Hcal\big(j(t)|\lambda(x(t))\big)\,\d t<\infty$ is bounded in $L^1(0,T;\RR^{|\R|})$, which shows the first equality.

\end{proof}

\section{Optimality of decay rate in \eref{e:decay}}\label{Sec:counterex}\

 We recall from \rref{r:decay} that integrability of the rates necessary to establish the lower bound estimates in Section~\ref{s:lb} -- but not the upper bound ones in Section~\ref{s:ub} -- is directly implied by a sufficiently slow decay of the rates \eref{e:decay}.
 In this section (and more specifically in \pref{p:optimality}) we make precise our claim that the range of exponents $\alpha$ given in \rref{r:decay} is maximal. In particular, we show that whenever the rates of jumps necessary to escape the degenerate set decay too fast (satisfying a condition similar to \eref{e:decay} for $\alpha \geq 1$), the rate function for the upper bound diverges for any $y\in  \Acal \Ccal(0,T;\Scal)$ with $y(0) \in \partial \mathcal S$ and
 $y(t) \in \Scal\setminus \partial \Scal$ for a $t>0$. We start the discussion of this problem with some examples as to capture the idea of our strategy in a simple setting.
Recall from Corollary~\ref{cor:upp_bound} that, under \aref{a:rate-converg}, for any $y\in  \Acal \Ccal(0,T;\Scal)$ and any $\delta>0$:
 \begin{equation}\label{eq:ell_upper}
\limsup\limits_{v\to\infty}\frac 1v \log \PP_{x_0^v}(X^v\in \overline{\BBp_{[0,T]}(\delta,y)})\leq -\inf_{\substack{x\in \overline{\BBp_{[0,T]}(\delta,y)}}}
\int_0^T\ell(x(s),\dot x(s))\dd s, \end{equation}
where we recall that $\ell$ is defined as
\begin{equ}
\ell(x,y)=\sup_{\theta\in \RR^{d}}{\theta \cdot y - \sum_{r \in \R} \lambda_r(x) \pc{\exp\pc{\theta \cdot \gamma_r}-1}}.
\end{equ}
\begin{example}
Recall Example~\ref{ex:diverging1d}, where \eref{e:decay} does not hold and the upper bound is easily seen to diverge.  Given the generator in \eqref{eq:example},
the integrand on the right-hand side of the \abbr{LDP} upper bound in  \eqref{eq:ell_upper} reads
\begin{equ}
  \ell(x,y) = \sup_{\theta\in\RR^d}\big\{ \theta y - e^{-1/x} (e^{\theta}-1)\big\} \geq y\log ye^{1/x} - (y-e^{-1/x}),
\end{equ}
where in the inequality we have chosen $\theta(x,y) = \log ye^{1/x}$.
Then, the \abbr{LDP} rate function can be bounded as follows
\begin{equ}
  I^{0}_{[0,T]}(z) \geq \int_0^T z'(t)\log (z'(t) e^{1/z(t)}) - (z'(t)-e^{-1/z(t)}) \d t \geq \int_0^T z'(t)\log z'(t) + z'(t)/z(t) - z(t) \,\d t,
\end{equ}
for any $z\in \Acal \Ccal (0,T;\RR_{\geq0})$ with $z(0)=0$.
Using that $x\log x> -1$ is continuous at 0 we have, for every bounded path $z$ with bounded derivative, that
\begin{equ}
  I^0_{[0,T]}(z) \geq -1 - z(1) + \int_0^1 z'(t)/z(t) \,\d t  \geq -1 - z(T) + \int_0^{z(T)} x^{-1}\,\d x = +\infty.
\end{equ}
\end{example}

In order to proceed and generalize the example above, we discuss two further examples highlighting the appropriate way to negate the assumptions in \rref{r:decay}.
\begin{example}
  Consider a system defined on $\mathcal S_v = (v^{-1}\mathbb N_0)^2$ with two jumps, $\gamma_1 = (0,1)$, $\gamma_2 = (1,0)$ and corresponding rates $\Lambda_1^v(x) = \lambda_1(x) = \one\{x_1\geq 1\}(x_1-1) $, $\Lambda_2^v(x) = \lambda_2(x) = 1$. For the initial condition $x_0 = 0$ the law of large numbers \cite{Kurtz1972} shows that the paths of $X^v$ concentrate around $(y_1^*, y_2^*)(t) = (t, \one\{t>1\} (t-1)^2/2)$. In particular, this implies the existence of paths $y(t) \in \Acal \C(0,T;\mathcal S)$ with $y(0) \in \partial \mathcal S$ and $y(t) \in \Scal\setminus \partial \Scal$  but having a finite large deviations cost for a system violating \eref{e:decay}.
\end{example}
To discuss the optimality of the interval for the parameter $\alpha$ governing the \emph{local} decay of rates in \eref{e:decay} we avoid  considering macroscopic behaviors like the one highlighted in the example above and we restrict our attention to paths that do not leave the set $\mathcal A_j$ in the time interval of interest, keeping $j$ fixed throughout this section. For the same reason, to negate \eref{e:decay} we consider jumps whose rates decay faster than $\exp[-k\cdot \mathrm{dist}(x,\partial\Acal_j)^{-1}]$ \emph{uniformly} in $x$ in $\mathcal A_j$ for a $k>0$. These jumps belong to the set
\begin{equ}\label{e:fast}
\mathrm{FAST}_{\Rcal,j}:=\pg{r \in\Rcal~: \lim_{\rho \to 0}\rho \pc{\sup_{z \in \Acal_j \colon \mathrm{dist}(z,\partial\Acal_j)< \rho} \log\lambda_r(z)} <0},
\end{equ}
with $\mathrm{dist}(z,\partial\Acal_j)=\inf_{x\in\partial \Acal_j}\|z-x\|$.
While this set is not, in general, the complement of the jumps whose rates satisfy \eref{e:decay}, it allows to capture, at least locally, those whose decay is more rapid (in terms of $\alpha$) than \eref{e:decay}.

We further notice that the existence of a single reaction $r \in \mathrm{FAST}_\R$ may still result in a finite cost for paths escaping $\partial \mathcal A_j$, as the following example shows.
\begin{example}
  Consider a system defined on $\mathcal S_v = (v^{-1}\mathbb N_0)^2$ with two jumps, $\gamma_1 = (-1,1)$ and $\gamma_2 = (1,1)$ and corresponding rates $\lambda_1(x) = x_1 e^{-1/x_2}$, $\lambda_2(x) = 1$. It is clear that this system satisfies a \abbr{LDP} with any sequence of initial conditions, see \cite{ShwartzWeiss05}. However, approaching the set $\{x\in \Rr^2~:~x_2 = 0\}$, upon choosing $w_j = (1,0)$ we see that  this system does not satisfy \eref{e:decay}. We are, however, able to \emph{choose} $w_j = (1,1)$ so that \emph{with such choice of} $w_j$ \eref{e:decay} holds for all $r \in \mathcal E_j$.
\end{example}

{In light of the above example, the statement we seek to negate is the existence of vectors $w_j$ and corresponding $\mathcal E_j$ stated in \aref{a:escape} such that $\mathcal E_j \cap \mathrm{FAST}_{\Rcal,j} = \emptyset$.}
To do so, we fix a
(non-empty) $\S$ and a limiting point $x_0\in \partial \Acal_j$, for $j\in \J$, assuming throughout that $\partial \Acal_j$ is a subset of a $(d-1)$-dimensional hyperplane to simplify the notation of the proof.
In this way, we define {$T_x\Acal_j=\{y\in \mathrm{span}_{r \in \R}(\gamma^r)\colon y\cdot n_x\geq 0 \}$}, with $n_x$ inward normal to the boundary $\partial \Acal_j$ in $x$. Assuming that there is no vector {$w_j\in \mathrm{span}_{r \in \R}(\gamma^r)$} that is a sum of jumps with rates decaying slow enough means that $\forall x\in \partial \Acal_j$
\begin{equation}\label{eq:bad_reactions}
  \mathrm{Co}_{x}\pc{\{\gamma^r~:r \in \pc{\R \setminus \mathrm{FAST}_{\Rcal,j}}\}} \cap T_{x}\Acal_j = \emptyset\,,
\end{equation}
where $\text{Co}_x(A)$ is the convex cone defined by the set of vectors $A$ with origin $x$.

Note that in this way we are building a class of processes where the jumps $r$ pointing in the interior of the domain (and therefore useful to escape the boundary) necessarily belong  to  $ \mathrm{FAST}_{\Rcal,j}$.

\begin{proposition}\label{p:optimality}
  Assume that \eref{eq:bad_reactions} holds, then for every $y \in \Acal \C(0,T;\mathcal S)$ with $y(0) = x_0 \in \partial \Acal_j$  and such that there exists $t_1 \in (0,T)$ with $y(t_1) \in \Acal_j\setminus \partial \Acal_j$ and $\inf\{t \in (0,T)~:~z(t) \not \in \Acal_j\}>t_1$ it holds that $I_{[0,T]}^{x_0}(y) = \infty$.
\end{proposition}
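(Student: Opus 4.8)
The plan is to establish $I^{x_0}_{[0,T]}(y)=\infty$ by exhibiting a short time interval, beginning at the instant $y$ leaves $\partial\Acal_j$, on which the Lagrangian density in $I^{x_0}_{[0,T]}(y)=\int_0^T\ell(y(s),\dot y(s))\,\d s$ (Corollary~\ref{cor:upp_bound}, using $y\in\AC(0,T;\Scal)$ and $y(0)=x_0$) fails to be integrable. We may assume $\ell(y(t),\dot y(t))<\infty$ for a.e.\ $t$, as otherwise there is nothing to prove; then, by the convex-duality identity recorded after \eqref{e:ell} and compactness of the sublevel sets of $\mu\mapsto\Hcal(\mu|\lambda)$, for a.e.\ $t$ we fix a minimiser $\mu(t)\in\RR^{\Rcal}_{\geq0}$ with $\sum_r\mu_r(t)\gamma^r=\dot y(t)$ and $\ell(y(t),\dot y(t))=\Hcal(\mu(t)|\lambda(y(t)))$. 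For the geometry, set $t^\dagger:=\sup\{t\in[0,t_1]:y(t)\in\partial\Acal_j\}$; since $y(0)=x_0\in\partial\Acal_j$ while $y(t_1)\in\Acal_j\setminus\partial\Acal_j$, continuity gives $t^\dagger\in[0,t_1)$, $y(t^\dagger)\in\partial\Acal_j$, and $\mathrm{dist}(y(t),\partial\Acal_j)>0$ on $(t^\dagger,t_1]$. Using that $\partial\Acal_j$ is a flat piece of a hyperplane $H$ with inward unit normal $n$, and that (as is implicit in the statement) $y(t^\dagger)$ lies in the relative interior of $\partial\Acal_j$, I would pick $b\in(t^\dagger,t_1)$ so that, writing $a:=t^\dagger$, on $(a,b]$ the affine function $\rho(t):=(y(t)-x_0)\cdot n$ agrees with $\mathrm{dist}(y(t),\partial\Acal_j)$ and stays below a constant $\rho_0>0$ fixed below; then $\rho$ is absolutely continuous with $\dot\rho(t)=n\cdot\dot y(t)$ a.e., $\rho(a)=0$, $\rho>0$ on $(a,b]$, and (since $\inf\{t:y(t)\notin\Acal_j\}>t_1\geq b$) $y(t)\in\Acal_j\setminus\partial\Acal_j\subseteq\Scal\setminus\partial\Scal$ for $t\in(a,b)$, so all rates $\lambda_r(y(t))$ are strictly positive there.

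Next I would extract the two quantitative inputs. From \eqref{e:fast}, each $r\in\mathrm{FAST}_{\Rcal,j}$ carries $k_r,\rho_r>0$ with $\lambda_r(z)\le\e^{-k_r/\mathrm{dist}(z,\partial\Acal_j)}$ whenever $z\in\Acal_j$, $0<\mathrm{dist}(z,\partial\Acal_j)<\rho_r$; taking $\rho_0:=\min_r\rho_r$ and $k:=\min_r k_r>0$ (finite minima over $\mathrm{FAST}_{\Rcal,j}$) gives $-\log\lambda_r(y(t))\ge k/\rho(t)$ on $(a,b)$ for $r\in\mathrm{FAST}_{\Rcal,j}$. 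From \eqref{eq:bad_reactions}: if some $r\notin\mathrm{FAST}_{\Rcal,j}$ had $\gamma^r\cdot n>0$, then $\gamma^r\in\mathrm{Co}_x(\{\gamma^{r'}:r'\notin\mathrm{FAST}_{\Rcal,j}\})\cap T_x\Acal_j$, contradicting emptiness, so $\gamma^r\cdot n\le0$ for all non-FAST $r$. Hence, for a.e.\ $t\in(a,b)$,
\begin{equation*}
\dot\rho(t)=n\cdot\dot y(t)=\sum_{r}\mu_r(t)\,(\gamma^r\cdot n)\;\le\;\sum_{r\in\mathrm{FAST}_{\Rcal,j}}\mu_r(t)\,(\gamma^r\cdot n)\;\le\;M\sum_{r\in\mathrm{FAST}_{\Rcal,j}}\mu_r(t),\qquad M:=\max_r\|\gamma^r\|.
\end{equation*}

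Then I would bound the cost density and integrate. Since every summand of $\Hcal$ in \eqref{e:lbrf2} is non-negative (all relevant rates are positive) and $u\log u-u\ge-1$ for $u\ge0$, for a.e.\ $t\in(a,b)$,
\begin{equation*}
\ell(y(t),\dot y(t))=\Hcal(\mu(t)|\lambda(y(t)))\;\ge\;\sum_{r\in\mathrm{FAST}_{\Rcal,j}}\Big(\tfrac{k}{\rho(t)}\,\mu_r(t)-1\Big)\;\ge\;\frac{k}{M}\,\frac{\dot\rho(t)}{\rho(t)}-\big|\mathrm{FAST}_{\Rcal,j}\big|,
\end{equation*}
the last inequality using the previous display and $\rho(t)>0$. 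As $\rho$ is continuous, positive and hence bounded away from $0$ on each $[a+\epsilon,b]$, $\log\rho$ is absolutely continuous there and $\int_{a+\epsilon}^{b}\dot\rho/\rho\,\d t=\log\rho(b)-\log\rho(a+\epsilon)$; combining with $\ell\ge0$, for every $\epsilon\in(0,b-a)$,
\begin{equation*}
\int_a^b\ell(y(t),\dot y(t))\,\d t\;\ge\;\frac{k}{M}\big(\log\rho(b)-\log\rho(a+\epsilon)\big)-\big|\mathrm{FAST}_{\Rcal,j}\big|\,T.
\end{equation*}
Letting $\epsilon\downarrow0$, the right-hand side diverges to $+\infty$ because $\rho(a+\epsilon)\to\rho(a)=0$, so $I^{x_0}_{[0,T]}(y)\ge\int_a^b\ell(y(t),\dot y(t))\,\d t=\infty$.

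The genuinely delicate step is the geometric reduction: that on a suitable sub-interval $\mathrm{dist}(y(t),\partial\Acal_j)$ coincides with the affine functional $(y(t)-x_0)\cdot n$, whose gradient is exactly the inward normal $n$ appearing in \eqref{eq:bad_reactions}. This uses the standing flatness hypothesis on $\partial\Acal_j$ together with the tacit assumption that $y$ leaves $\partial\Acal_j$ through a relative-interior point; should $y(t^\dagger)$ lie on a relative-boundary face of $\partial\Acal_j$, one would re-cover it by an element $\Acal_{i'}$ of the covering in whose relative interior it sits and argue with the normal of the face actually touched. Everything else is a routine convexity estimate combined with the logarithmic divergence $\int_{0^+}\d\rho/\rho=+\infty$.
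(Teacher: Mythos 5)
Your proof is correct but takes a genuinely different route from the paper. The paper proves divergence of $I^{x_0}_{[0,T]}(y)$ by working in the \emph{dual} form: it plugs a specific test function $\vartheta(t)=\kappa\nabla\Phi(y(t))$ with $\Phi(y)=\log\big(n_{x_0}\cdot(y-x_0)\big)$ into the variational formula for $\ell$, obtains the divergent term $\kappa\big(\Phi(y(t_1))-\Phi(y(t_0))\big)$ with $\Phi(y(t_0))=-\infty$, and then must \emph{tune} $\kappa$ so that the competing exponential term $\sum_r\int\lambda_r(y)\,e^{\kappa\gamma^r\cdot\nabla\Phi(y)}$ stays finite -- here the split into $\Rcal_+\subseteq\mathrm{FAST}_{\Rcal,j}$ and $\Rcal_-$ is used to keep the exponential under control. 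You instead work in the \emph{primal} form: you fix the cost-minimising flux $\mu(t)$, use positivity of each relative-entropy summand to drop the non-FAST terms, use the pointwise bound $-\log\lambda_r\geq k/\rho$ on FAST jumps, and combine with $\dot\rho\leq M\sum_{r\in\mathrm{FAST}}\mu_r$ (which is exactly where \eqref{eq:bad_reactions} enters, via $\gamma^r\cdot n\leq0$ for non-FAST $r$) to obtain $\ell\geq\tfrac kM\,\dot\rho/\rho-|\mathrm{FAST}_{\Rcal,j}|$ and then integrate. This avoids the $\kappa$-tuning entirely and makes the logarithmic divergence $\int_{0^+}\mathrm{d}\rho/\rho$ explicit, which is arguably cleaner. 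Both proofs rest on the same two geometric simplifications -- that $\partial\Acal_j$ is a flat piece of a hyperplane and that $y$ leaves it through a relative-interior point so that $(y(t)-x_0)\cdot n$ actually equals $\mathrm{dist}(y(t),\partial\Acal_j)$ on a short interval -- and you are more explicit than the paper about why these are needed (they are essential for converting the FAST estimate, phrased in terms of $\mathrm{dist}(\cdot,\partial\Acal_j)$, into a bound in terms of $\rho$, since one only has $\mathrm{dist}\geq\rho$ unconditionally, which goes the wrong way). One minor gap you should mention for completeness: measurability of the selected minimiser $t\mapsto\mu(t)$, which is a routine measurable-selection argument but is needed to make sense of the pointwise inequality almost everywhere.
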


\begin{proof}Recalling the structure in \eref{eq:ell_upper} of the \abbr{LDP} upper bound we notice that, in order to show that the rate function is infinite for any path $y \in \Acal \Ccal(0,T;\SS)$ as above,
  it is sufficient to find
  a $\theta(t,y)$ such that
\begin{equation}\label{eq:tuned_upper_buond}
\int_0^T\pq{\theta(s,y(s)) \cdot \dot y(s) - \sum_{r \in \R} \lambda_r(y(s)) \pc{\exp\pq{\theta(s,y(s)) \cdot \gamma_r}-1}}\dd s = +\infty\,.
\end{equation}
By assumption, there exists $t_0<t_1 \in [0,T]$ such that $y(t_0) \in \partial \mathcal A_j$ and $y(t) \not \in \partial \Acal_j$ for all $t\in (t_0,t_1)$.
We now aim to express $\int_{t_0}^{t_1}\!\vartheta(t,y(t))\cdot \dot y(t)\,\d t$ as an exact integral of some potential for which $\Phi(x_0)=-\infty$, 
 while we choose $\vartheta(t,y(t))=0$ for $t\in[0,t_0]\cup[t_1,T]$, such that the integral in \eqref{eq:tuned_upper_buond} vanishes on that interval.
More specifically, following for example~\cite{HilderPeletierSharmaTse20}, for a $\kappa>0$ we take $\vartheta=\kappa \nabla\Phi(y)$ so that
\begin{equation*}
  I(y)\geq  \kappa\Phi(y(t_1)) - \kappa\Phi(y(t_0)) - \sum_{r \in \R} \int_{t_0}^{{t_1}}\!\lambda_r(y(t))e^{\kappa \gamma_r\cdot \nabla\Phi(y)}\,\d t + \underbrace{\sum_{r \in \R} \int_{t_0}^{t_1}\!\lambda_r(y(t))\,\d t}_{\geq0}\,.
\end{equation*}
The missing step is therefore to choose $\Phi$ and tune $\kappa$ such that $\Phi(y(t_0))=-\infty$ and $ \sum_{r \in \R} \int_{t_0}^{t_1}\!\lambda_r(y(t))e^{\kappa \gamma_r\cdot \nabla\Phi(y)}\,\d t $  is bounded. For the choice $\Phi(y):=\log(n_{x_0}\cdot (y-x_0))$ we have:
\begin{align*}
  \sum_{r \in \R} \int_{t_0}^{t_1}\!\lambda_r(y_t)\exp\pq{\kappa \gamma_r\cdot \nabla\Phi(y)}\,\d t & = \sum_{r \in \R} \int_{t_0}^{t_1}\!e^{\log\lambda_r(y(t))+\kappa (n_x\cdot \gamma_r)/(n_{x_0}\cdot (y(t)-x_0))}\,\d t\,.
\end{align*}
 With this choice, since $y\in \Acal\Ccal(0,T;\Scal)$, we have that $n_{x_0}\cdot (y(t)-x_0)\geq 0$ for $t\in (t_0,t_1)$. 
 We can split the jumps in the set $\Rcal_+:=\{r\in\Rcal:\gamma^r\in T_{x_0}\Acal_j\}$ and $\Rcal_-=\Rcal\setminus \Rcal_+$. For the latter class of jumps we have $n_{x_0}\cdot\gamma_r \leq 0$ and
 \[
 \sum_{r\in\Rcal_-} \int_{t_0}^{t_1}\! \lambda_r(y(t)) e^{\kappa\tfrac{ n_{x_0}\cdot \gamma_r}{n_{x_0}\cdot (y(t)-x_0)}}\,\d t<+\infty\,,
\]
 since the argument of each integral is bounded on $(t_0,t_1)$. On the other hand, we handle the terms coming from
the former class of jumps $\Rcal_+$ -- the ones pushing the process in the interior of $\Scal$ -- using that $\Rcal_+\subseteq  \mathrm{FAST}_{\Rcal,j}$\,.
Therefore
we  can tune $\kappa$ such that
\begin{equation*}
\lim_{t\rightarrow0}(n_{x_0} \cdot (y(t)-x_0)) \log\lambda_r(y_t)+\kappa (n_{x_0}\cdot \gamma_r)\leq 0\qquad \forall r\in \Rcal_+\,,
 \end{equation*}
ensuring that $ \sum_{r\in\Rcal_+} \int_{t_0}^{t_1}\! \lambda_r(y(t)) \exp\pq{\kappa\tfrac{ n_{x_0}\cdot \gamma_r}{n_{x_0}\cdot (y(t)-x_0)}}\,\d t<+\infty$. This proves \eqref{eq:tuned_upper_buond}.
\end{proof}

\section{List of symbols}\label{Sec:constants}
\phantom{a}
\begin{table}[htp]
\centering
\begin{tabular}{l l l}
  $\R$                     & finite set of jumps/reactions                            & Subsec.~\ref{subsec:intro ldps} \\
  $\Lambda^v_r,\lambda_r$  & microscopic and macroscopic jump rates                   & Subsec.~\ref{subsec:intro ldps} \& Ass.~\ref{a:rate-converg}, \ref{a:escape} \\
  $\gamma^r\in\RR^d$       & jump vectors                                             & Subsec.~\ref{subsec:intro ldps} \& Ass.~\ref{a:escape}\\
  $\SS_v,\SS,\partial\SS\subset\RR^d$  & reachable points and boundary/degenerate set & Sec.~\ref{Sec:not_res}\\
  $\Acal_i,\partial\Acal_i\subset\RR^d$ & covering of $\SS$                           & Sec.~\ref{Sec:not_res}\\
  $\I,\J$,                 & index sets for covering                                  & Sec.~\ref{Sec:not_res}\\
  $\BB_\rho(x)\subset\RR^d$& euclidean ball of radius $\rho$ and center $x$           & Subsec.~\ref{s:assumptions}\\
  $D_u(0,T;\RR^d) \;\big(D_s(0,T;\RR^d)\big)$ & \cadlag functions with uniform (Skorohod) topology &  Subsec.~\ref{subsec:the ldp}\\
  $\Acal \Ccal(0,T;\RR^d)\; \big(\Acal \Ccal(0,T;\Scal)\big)$ & absolutely continuous functions (restricted to $\Scal$) &  Subsec.~\ref{subsec:the ldp}\\
  $\BBp_{[0,T]}(\rho,z)$   & ball of radius $\rho$ and center $z$ in $D_u(0,T;\RR^d)$ &  Subsec.~\ref{subsec:the ldp}\\
  $I_{[0,T]}, I_{[0,T]}^{x_0}$ & large-deviation action and rate functional           & Eqns.~\eqref{e:lbrf}, \eqref{e:rf2}\\
  $\Hcal( \mu|\lambda), \ell(x,y) $ & relative entropy and Lagrangian                 & Eqn.~\eqref{e:lbrf2}, \eqref{e:ell}\\
  $\Jcal,\widetilde\Jcal_\Scal$      & flux large-deviation functional and dual form  & Section~\ref{s:ub}\\
  $\Ecal_j\subset\R, w_j\in\RR^d,\alpha_j>0$ & escape sequence, vector and normalisation & Ass.~\ref{a:escape} \ref{assit:wj}), \ref{assit:Ej})\\
  $\epsilon,\epsilon',\kappa_j,\kappa_->0, $    & escape parameters                     & Ass.~\ref{a:escape} \ref{assit:wj})\\
  $\epsilon'',\kappa''>0$  & monotonicity range                                       & Ass.~\ref{a:escape} \ref{assit:monotonicity})\\
  $\rr,\rrr_\delta$        & target and approximated path                             & Subsec.~\ref{s:path}\\
  $t_\delta,\xi,\beta>0,\omega_z$ & path shift parameters, modulus of continuity      & Eqns. \eqref{e:rtilde},\eqref{e:rtilde2}\\
    $\delta', \delta''>0$ & neighborhood parameters of shifted path& Subsec.~\ref{s:path}\\

\end{tabular}
\end{table}

\section*{Acknowledgements}
AA was partially supported by  the Swiss National Science Foundation grant P2GEP2-175015 and by the NSF grant DMS-1613337. He furhter acknowledges the hospitality of the Weierstrass Institute of Applied for Applied Analysis and Stochastics. LA acknowledges the hospitality of the Hausdorff Institute in Bonn, during the Junior trimester program
``Randomness, PDEs and Nonlinear Fluctuation'', where she partially worked on this project. RP and MR received support from the Deutsche Forschungsgemeinschaft (DFG) through grant CRC 1114 ``Scaling Cascades in Complex Systems'', Project C08.

\bibliographystyle{JPE.bst}
\bibliography{library}

\begin{thebibliography}{10}

\bibitem{agazzi18}
A.~Agazzi, A.~Dembo, and J.-P. Eckmann.
\newblock Large deviations theory for markov jump models of chemical reaction
  networks.
\newblock {\em The Annals of Applied Probability\/} {\bf 28} (2018),
  1821--1855.

\bibitem{agazzi182}
A.~Agazzi, A.~Dembo, and J.-P. Eckmann.
\newblock On the geometry of chemical reaction networks: Lyapunov function and
  large deviations.
\newblock {\em Journal of Statistical Physics\/} {\bf 172} (2018), 321--352.

\bibitem{agazzi20}
A.~{Agazzi} and J.~C. {Mattingly}.
\newblock {Seemingly stable chemical kinetics can be stable, marginally stable,
  or unstable}.
\newblock {\em {Communications in Mathematical Sciences}\/} {\bf 18} (2020),
  1605 -- 1642.

\bibitem{anderson18}
D.~F. Anderson, D.~Cappelletti, J.~Kim, and T.~D. Nguyen.
\newblock Tier structure of strongly endotactic reaction networks.
\newblock {\em Stochastic Processes and their Applications\/} {\bf 130} (2020),
  7218 -- 7259.

\bibitem{biggins04}
J.~Biggins.
\newblock Large deviations for mixtures.
\newblock {\em Electronic Communications in Probability\/} {\bf 9} (2004),
  60--71.

\bibitem{Dawson1987}
D.~Dawson and J.~Gärtner.
\newblock Large deviations from the {M}c{K}ean-{V}lasov limit for weakly
  interacting diffusions.
\newblock {\em Stochastics\/} {\bf 20} (1987), 247--308.

\bibitem{delmoral04}
P.~Del~Moral.
\newblock Feynman-kac formulae.
\newblock In: {\em Feynman-Kac Formulae\/} (Springer, 2004), pp. 47--93.

\bibitem{Dembo1998}
A.~Dembo and O.~Zeitouni.
\newblock {\em Large deviations techniques and applications\/}, volume~38 of
  {\em Stochastic modelling and applied probability\/} (New York, NY, USA:
  Springer, 1987), 2nd edition.

\bibitem{dupuis91}
P.~Dupuis, R.~S. Ellis, and A.~Weiss.
\newblock Large deviations for markov processes with discontinuous statistics,
  i: General upper bounds.
\newblock {\em Ann. Probab.\/} {\bf 19} (1991), 1280--1297.

\bibitem{dupuis16}
P.~Dupuis, K.~Ramanan, and W.~Wu.
\newblock Large deviation principle for finite-state mean field interacting
  particle systems.
\newblock Technical Report 1601.06219, arXiv (2016).

\bibitem{feng2006}
J.~Feng and T.~Kurtz.
\newblock {\em Large deviations for stochastic processes\/}, volume 131 of {\em
  Mathematical surveys and monographs\/} (Providence, RI, USA: American
  Mathematical Society, 2006).

\bibitem{Feng1994dynamic}
S.~Feng.
\newblock Large deviations for empirical process of mean-field interacting
  particle system with unbounded jumps.
\newblock {\em The Annals of Probability\/} {\bf 22} (1994), 1679--2274.

\bibitem{freidlin12}
M.~Freidlin and A.~Wentzell.
\newblock {\em Random Perturbations of Dynamical Systems\/}.
\newblock Grundlehren der mathematischen Wissenschaften (Springer, 2012).

\bibitem{vde19}
T.~Grafke and E.~Vanden-Eijnden.
\newblock Numerical computation of rare events via large deviation theory.
\newblock {\em Chaos: An Interdisciplinary Journal of Nonlinear Science\/} {\bf
  29} (2019), 063118.

\bibitem{HilderPeletierSharmaTse20}
B.~Hilder, M.~A. Peletier, U.~Sharma, and O.~Tse.
\newblock An inequality connecting entropy distance, fisher information and
  large deviations.
\newblock {\em Stochastic Processes and their Applications\/} {\bf 130}.

\bibitem{Kordecki97}
W.~Kordecki.
\newblock Reliability bounds for multistage structures with independent
  components.
\newblock {\em Statistics \& Probability Letters\/} {\bf 34} (1997), 43 -- 51.

\bibitem{Kurtz1970}
T.~Kurtz.
\newblock Solutions of ordinary differential equations as limits of pure jump
  processes.
\newblock {\em Journal of Applied Probability\/} {\bf 7} (1970), 49--58.

\bibitem{Kurtz1972}
T.~G. Kurtz.
\newblock The relationship between stochastic and deterministic models for
  chemical reactions.
\newblock {\em The Journal of Chemical Physics\/} {\bf 57} (1972), 2976--2978.

\bibitem{esposito19}
A.~Lazarescu, T.~Cossetto, G.~Falasco, and M.~Esposito.
\newblock Large deviations and dynamical phase transitions in stochastic
  chemical networks.
\newblock {\em The Journal of Chemical Physics\/} {\bf 151} (2019), 064117.

\bibitem{Leonard1995}
C.~Léonard.
\newblock Large deviations for long range interacting particle systems with
  jumps.
\newblock {\em Annales de l'{I}nstitut {H}enri {P}oincaré, section {B}\/} {\bf
  31} (1995), 289--323.

\bibitem{Mielke2016}
A.~Mielke.
\newblock {\em On Evolutionary $\Gamma$-Convergence for Gradient Systems\/}
  (Cham, Swiss: Springer International Publishing, 2016), pp. 187--249.

\bibitem{MielkePattersonPeletierRenger2015}
A.~Mielke, R.~I.~A. Patterson, M.~A. Peletier, and D.~R.~M. Renger.
\newblock Non-equilibrium thermodynamic principles for nonlinear chemical
  reactions and systems with coagulation and fragmentation.
\newblock {\em WIAS Preprint\/} {\bf 2165}.

\bibitem{pardoux16}
E.~Pardoux and B.~Samegni-Kepgnou.
\newblock Large deviation principle for poisson driven sdes in epidemic models.
\newblock {\em arXiv preprint arXiv:1606.01619\/} .

\bibitem{pardoux17}
E.~Pardoux and B.~Samegni-Kepgnou.
\newblock Large deviation principle for epidemic models.
\newblock {\em Journal of Applied Probability\/} {\bf 54} (2017), 905--920.

\bibitem{pattersonrenger19}
R.~I.~A. {Patterson} and D.~R.~M. {Renger}.
\newblock {Large Deviations of Jump Process Fluxes}.
\newblock {\em Mathematical Physics, Analysis and Geometry\/} {\bf 22} (2019),
  21.

\bibitem{popovic19}
L.~Popovic.
\newblock Large deviations of markov chains with multiple time-scales.
\newblock {\em Stochastic Processes and their Applications\/} {\bf 129} (2019),
  3319--3359.

\bibitem{ShwartzWeiss95}
A.~Shwartz and A.~Weiss.
\newblock {\em Large deviations for performance analysis\/}.
\newblock Stochastic Modeling Series (Chapman \& Hall, London, 1995).
\newblock Queues, communications, and computing, With an appendix by Robert J.
  Vanderbei.

\bibitem{ShwartzWeiss05}
A.~Shwartz and A.~Weiss.
\newblock Large deviations with diminishing rates.
\newblock {\em Math. Oper. Res.\/} {\bf 30} (2005), 281--310.

\bibitem{vde04}
E.~Weinan, W.~Ren, and E.~Vanden-Eijnden.
\newblock Minimum action method for the study of rare events.
\newblock {\em Communications on pure and applied mathematics\/} {\bf 57}
  (2004), 637--656.

\end{thebibliography}

\end{document}